  \let\ifdraftdoc\if@draft
\renewcommand{\qedsymbol}{$\blacksquare$}
\newcommand{\openqedsymbol}{$\square$}
\newcommand{\exercisesymbol}{$\lozenge$}
\newcommand{\obvioussymbol}{$\blacklozenge$}
\newcommand{\openproof}{\renewcommand{\qedsymbol}{\openqedsymbol}}
\newcommand{\obviousproof}{\renewcommand{\qedsymbol}{\obvioussymbol}}
\DeclareMathAlphabet{\mathcal}{OMS}{cmsy}{m}{n}
\tikzset{baseline=(current bounding box.center)}
\let\LiningNumbers\relax
\DeclareTextFontCommand{\textrmup}{\rmfamily\mdseries\upshape}
\DeclareTextFontCommand{\textbfup}{\rmfamily\bfseries\upshape}
\DeclareTextFontCommand{\textttup}{\ttfamily\mdseries\upshape}
\DeclareTextFontCommand{\textsfup}{\sffamily\mdseries\upshape}
\setlist[enumerate]{font=\upshape}
\DeclareFontFamily{OT1}{pzc}{}
\DeclareFontShape{OT1}{pzc}{mb}{it}{
   <-> s * [1.150] pzcmi7t
}{}
\DeclareMathAlphabet{\mathpzc}{OT1}{pzc}{mb}{it}
\DeclareMathSymbol{\epsilon}{\mathord}{letters}{"22}
\DeclareMathSymbol{\phi}{\mathord}{letters}{"27}
\DeclareMathSymbol{\varepsilon}{\mathord}{letters}{"0F}
\DeclareMathSymbol{\varphi}{\mathord}{letters}{"1E}
\newcommand*{\noprelistbreak}{\@nobreaktrue\nopagebreak}
\newcommand{\hairspace}{\ifmmode\mskip1mu\else\kern0.08em\fi}
\newcommand{\mmhfill}{\hskip \textwidth minus \textwidth}
\def\abbrdot{\@ifnextchar.{}{.\@\xspace}}
\newcommand{\etc}{etc\abbrdot}
\newcommand{\ie}{i.e\abbrdot}
\newcommand{\resp}{resp\abbrdot}
\newcommand{\opcit}{op.\@ cit\abbrdot}
\newcommand{\Chap}{Ch.\@~}
\newcommand{\Sect}{\S\hairspace}
\newcommand{\strong}[1]{\textbfup{#1}}
\renewcommand{\implies}{\ifmmode\Longrightarrow\else$\Rightarrow$\expandafter\xspace\fi}
\renewcommand{\iff}{\ifmmode\Longleftrightarrow\else$\Leftrightarrow$\expandafter\xspace\fi}
\let\internal@prime\prime
\renewcommand{\prime}{\ifmmode\internal@prime\else$\sp{\internal@prime}$\expandafter\xspace\fi}
  \def\nequiv{\not\equiv}
\newcommand{\blank}{\mathord{-}}
\newcommand{\pblank}{\mathord{(-)}}
\newcommand{\embedinto}{\hookrightarrow}
\newcommand{\hoto}{\Rightarrow}
\DeclareMathSymbol{.}{\mathpunct}{letters}{"3A}
\DeclareMathSymbol{:}{\mathrel}{operators}{"3A}
\DeclareMathOperator{\ob}{ob}
\DeclareMathOperator{\mor}{mor}
\DeclareMathOperator{\discr}{disc}
\newcommand{\Ho}[1][]{\mathop{\mathrm{Ho}\ifstrempty{#1}{}{\sb{#1}}}}
\newcommand{\LH}{\mathbf{L}^\mathrm{H}}
\newcommand{\id}{\mathrm{id}}
\newcommand*{\lr}[3]{\mathopen{}\mathclose{\left#1{#2}\right#3}}
\newcommand*{\lmr}[5]{\mathopen{}\mathclose{\left#1{#2}\,\middle#3\,{#4}\right#5}}
\newcommand*{\ordlr}[3]{\mathord{\left#1{#2}\right#3}}
\newcommand*{\argp}[1]{\ifstrempty{#1}{}{\lr({#1})}}
\newcommand*{\argptwo}[2]{\lr({#1, #2})}
\newcommand*{\argptwotwo}[4]{\lr({#1, #2; #3, #4})}
\newcommand*{\argb}[1]{\ifstrempty{#1}{}{\lr[{#1}]}}
\newcommand*{\parens}[1]{\lr({#1})}
\newcommand*{\bracket}[1]{\lr[{#1}]}
\newcommand*{\tuple}[1]{\ordlr({#1})}
\newcommand*{\setbuilder}[2]{\mathord{\lmr\{{#1}|{#2}\}}}
\newcommand*{\seqbuilder}[2]{\mathord{\lmr({#1}|{#2})}}
  \def\fracslash{\mathbin{/}}
  \def\divslash{\mathbin{/}}
\newcommand*{\ul}[1]{\smash{\underline{#1}}\vphantom{#1}}
\newcommand*{\optargp}{\new@ifnextchar\bgroup{\argp}{}}
\newcommand*{\optargptwo}{\new@ifnextchar\bgroup{\expandafter\optargptwo@first}{}}
\newcommand*{\optargptwo@first}[1]{\new@ifnextchar\bgroup{\argptwo{#1}}{\argp{#1}}}
\newcommand*{\optargptwotwo}{\new@ifnextchar\bgroup{\expandafter\optargptwotwo@first}{}}
\newcommand*{\optargptwotwo@first}[2]{\new@ifnextchar\bgroup{\argptwotwo{#1}{#2}}{\argptwo{#1}{#2}}}
\newcommand*{\sbtwo}[2]{\sb{\lr({{#1}, {#2}})}}
\newcommand*{\optsb}{\new@ifnextchar\bgroup{\sb}{}}
\newcommand*{\optsp}{\new@ifnextchar\bgroup{\sp}{}}
\newcommand*{\optsbtwo}{\new@ifnextchar\bgroup{\expandafter\optsbtwo@first}{}}
\newcommand*{\optsbtwo@first}[1]{\new@ifnextchar\bgroup{\sbtwo{#1}}{\sb{#1}}}
\newcommand*{\optsbargptwo}[1][]{\sb{#1}\optargptwo}
\newcommand*{\optspsbargptwo}[1][]{\sp{#1}\optsbargptwo}
\newcommand*{\set}[1]{\new@ifnextchar\bgroup{\setbuilder{#1}}{\ordlr\{{#1}\}}}
\newcommand*{\seq}[1]{\new@ifnextchar\bgroup{\seqbuilder{#1}}{\tuple{#1}}}
  \def\hash{\#}
\newcommand*{\powerset}[1][]{\mathscr{P}\ifstrempty{#1}{\hairspace}{\sb{#1}} \optargp}
\newcommand{\mbfs}{\mathbf{s}}
\newcommand{\mbfc}{\mathbf{c}}
\newcommand*{\smashsp}[2]{{#1}\sp{\smash{#2}}}
\newcommand*{\op}[1]{\smashsp{#1}{\mathrm{op}}}
\newcommand*{\inv}[2][1]{{#2}^{-#1}}
\newcommand*{\ctchoice}[2]{%
  \ifdef{\ct@homstyle}%
  {#2}%
  {#1}}
\newcommand*{\ctchoicetwo}[5]{%
  \ifdef{\ct@homcatstyle}%
  {#2}{%
  \ifdef{\ct@transfstyle}%
  {#3}{%
  \ifdef{\ct@catstyle}%
  {#4}{%
  \ifdef{\ct@homstyle}%
  {#5}{%
  {#1}}}}}}
\newcommand*{\DefineCategory}[2]{\DeclareRobustCommand{#1}{\ctchoice{\mathbf{#2}}{\mathbf{#2}}}}
\newcommand*{\cat}[1]{%
  \begingroup%
  \def\ct@catstyle{cat}%
  #1%
  \endgroup}
\newcommand*{\bicat}[1]{%
  \begingroup
  \def\ct@bicatstyle{bicat}%
  #1%
  \endgroup}
\newcommand*{\Hom}[1][]{%
  \begingroup%
  \def\ct@homstyle{hom}%
  \ifstrempty{#1}{\mathrm{Hom}}{#1}%
  \endgroup%
  \optargptwo}
\newcommand*{\HHom}[1][]{%
  \begingroup%
  \def\ct@homcatstyle{homcat}%
  \ifstrempty{#1}{\mathbf{Hom}}{#1}%
  \endgroup
  \optargptwo}
\newcommand*{\hpty}[1][]{%
  \begingroup%
  \def\ct@transfstyle{transf}%
  \ifstrempty{#1}{\mathrm{Nat}}{#1}%
  \endgroup
  \optargptwotwo}
\DefineCategory{\Set}{Set}
\DefineCategory{\Simplex}{\Delta}
\DefineCategory{\SSet}{sSet}
\DefineCategory{\SSSet}{ssSet}
\newcommand{\Cat}{\ctchoicetwo{\mathfrak{Cat}}{\mathbf{Fun}}{\mathbf{Fun}}{\mathbf{Cat}}{\mathrm{Fun}}}
\newcommand*{\commacat}[2]{\ordlr({#1 \mathbin{\downarrow} #2})}
\newcommand*{\overcat}[2]{\mathord{#1 \sb{\mathord{\divslash} #2}}}
\newcommand*{\undercat}[2]{\mathord{\sp{#1 \mathord{\divslash}} #2}}
\newcommand*{\nv}[1][]{\mathrm{N}\sp{#1}\optargp}
\newcommand*{\rn}[2][]{\ordlr|{#2}|\ifstrempty{#1}{}{\sb{#1}}}
\DeclareMathOperator{\Tot}{Tot}
\newcommand{\skel}[1][]{\mathrm{sk}\sb{#1}\optargp}
\newcommand{\ccoskel}[1][]{\mathrm{cosk}\sp{#1}\optargp}
\newcommand{\totalL}{\mathbf{L}}
\newcommand{\totalR}{\mathbf{R}}
\newcommand*{\homs}{\mathpzc{h}\optsb}
\newcommand*{\ulhoms}{\ul{\mathpzc{h}}\optsb}
\newcommand*{\hathoms}{\hat{\mathpzc{h}}\optsb}
\newcommand*{\sHom}[1][]{\ifstrempty{#1}{\smash{\mathpzc{Hom}}\vphantom{\mathrm{Hom}}}{\smash{\mathpzc{Hom}}\vphantom{\mathrm{Hom}}_{#1}}\optargptwo}
\newcommand*{\RHom}[1][]{\ifstrempty{#1}{\totalR \mathrm{Hom}}{\totalR \mathrm{Hom}_{#1}}\optargptwo}
\newcommand*{\xHom}[3][]{\ifstrempty{#1}{\ordlr[{{#2}, {#3}}]}{\ordlr[{{#2}, {#3}}]\sb{#1}}}
\newcommand*{\ulHom}[1][]{\ifstrempty{#1}{\ul{\Hom}}{\ul{\Hom[#1]}}\optargptwo}
\newcommand*{\Ext}{\mathrm{Ext}\optspsbargptwo}
\newcommand*{\Func}[2]{\xHom{#1}{#2}}
\newcommand*{\wcolim}[2][]{\mathchoice%
  {\mathop{{\varinjlim}^{#2}}_{{#1}\phantom{#2}}}%
  {\varinjlim_{#1}^{#2}}%
  {\varinjlim_{#1}^{#2}}%
  {\varinjlim_{#1}^{#2}}}
\newcommand{\BKcolim}[1][]{\wcolim[#1]{\mathrm{BK}}}
\newcommand{\KBcolim}[1][]{\wcolim[#1]{\mathrm{KB}}}
\newcommand{\laxcolim}[1][]{\wcolim[#1]{\mathrm{Th}}}
\newcommand{\oplaxcolim}[1][]{\wcolim[#1]{\mathrm{Gr}}}
\renewcommand\footnoterule{%
  \vfill
  \kern-3\p@
  \hrule\@width.4\columnwidth
  \kern2.6\p@}
\renewcommand*{\@makefnmark}{\hbox{\textsuperscript{\normalfont [\LiningNumbers\@thefnmark]}}}
\renewcommand*{\@makefntext}[1]{\parindent 1.0em\noindent\ifdefempty{\@thefnmark}{}{\hb@xt@-1.0em{\hss \normalfont [\LiningNumbers \@thefnmark]}\hspace{1.0em}}#1}
\newcommand{\footpar}[1]{\gdef\@thefnmark{}\@footnotetext{#1}}
\newcommand{\hangsecnum}{\def\@seccntformat##1{\llap{\csname the##1\endcsname\quad}}}
    \name{sortname}
    \name{author}
    \name{editor}
    \name{translator}
\declaretheorem[style=plain,parent=section,title=Theorem,refname=theorem,Refname=Theorem]{thm}
\declaretheorem[style=plain,sibling=thm,title=Proposition,refname=proposition,Refname=Proposition]{prop}
\declaretheorem[style=plain,sibling=thm,title=Lemma,refname=lemma,Refname=Lemma]{lem}
\declaretheorem[style=plain,sibling=thm,title=Corollary,refname=corollary,Refname=Corollary]{cor}
\declaretheorem[style=definition,sibling=thm,title=Definition,refname=definition,Refname=Definition]{dfn}
\declaretheoremstyle[style=remark,headfont=\scshape]{remark}
\declaretheorem[style=remark,sibling=thm,title=Remark,refname=remark,Refname=Remark]{remark}
\declaretheorem[style=plain,numbered=no,title=Theorem]{thm*}
\declaretheorem[style=plain,numbered=no,title=Proposition]{prop*}
\declaretheorem[style=remark,numbered=no,title=Remark]{remark*}
\declaretheorem[style=definition,numbered=no,title=Example]{example*}
\renewcommand*{\bibnamedash}{}
\newcommand*{\authoryearpunct}{\hspace*{-\bibhang}\bibsentence}
\appto\citesetup{\LiningNumbers}
    \newline\setunit{\authoryearpunct}}%
    \newline\setunit{\authoryearpunct}%
    \newline\setunit{\authoryearpunct}%
\appto\bibsetup{\raggedright}
\appto\bibfont{\LiningNumbers\small}
\title{Revisiting function complexes and simplicial localisation}
\author{Zhen~Lin Low}
\date{29 September 2014}
\begin{document}

\maketitle
\footpar{Department of Pure Mathematics and Mathematical Statistics, University of Cambridge, Cambridge, UK. \textsc{E-mail address}: \texttt{Z.L.Low@dpmms.cam.ac.uk}}

\begin{abstract}
In this paper three results are established: firstly, that the homotopy function complexes of Dwyer and Kan can be defined as certain total right derived functors; secondly, that they \emph{functorially} compute the homotopy type of the hom-spaces in the simplicial localisation; and thirdly, that they can be computed by fibrant replacements in a suitable left Bousfield localisation of the projective model structure on simplicial presheaves.
\end{abstract}

\section*{Introduction}

A (closed) model category in the sense of \citet{Quillen:1967} is an abstraction of the homotopy theory of topological spaces: it is a category equipped with notions of `path space', `homotopy', \etc that behave much like their namesakes in the category of topological spaces. As such, one might have also expected a notion of `mapping space', but initially, these were only defined for simplicial model categories. The first general definition appeared in the work of \citet{Dwyer-Kan:1980a,Dwyer-Kan:1980b,Dwyer-Kan:1980c}: in fact, they introduced \emph{three} explicit models for mapping spaces and showed that they are all weakly homotopy equivalent. In brief: 
\begin{itemize}
\item The first model is constructed using the methods of homotopical algebra applied to the category of simplicially enriched categories over a fixed object set: one essentially takes a cofibrant resolution of the model category itself and then localises that.

\item The second model is built using ``reduced hammocks'' and resembles Yoneda's \parencite*{Yoneda:1954} construction of $\Ext[*]$-groups in terms of diagrams.

\item The third model is defined in terms of simplicial and cosimplicial resolutions, which is essentially the same as the construction of $\Ext[*]$-groups in terms of injective and projective resolutions.
\end{itemize}

It was shown in the second Dwyer--Kan paper that the first two models are weakly homotopy equivalent in a functorial way, and the main result of the third Dwyer--Kan paper was that the last two models are weakly homotopy equivalent, modulo a minor gap which was repaired by \citet[\Sect 7]{Mandell:1999} and \citet{Dugger:2006b} independently. Unfortunately, the complications so introduced make it non-obvious whether the weak homotopy equivalence constructed can be made functorial; one of the goals of this paper is to clarify this point by giving yet another proof of the Dwyer--Kan result.

We will revisit all three Dwyer--Kan constructions in this paper, following the outline below:
\begin{itemize}
\item In \Sect 1, we review the theory of homotopy colimits of diagrams of simplicial sets.

\item In \Sect 2, we extend the analogy with homological algebra indicated in the first paragraph by showing that homotopy function complexes can be defined as total right derived functors of certain functors defined on the category of (co)simplicial objects.

\item In \Sect 3, we show that homotopy function complexes are \emph{naturally} weakly homotopy equivalent to the hom-spaces of the hammock localisation.

\item In \Sect 4, we use left Bousfield localisation to show that representable presheaves admit a generalised right derived functor, which can be computed in terms of the hom-spaces of the (standard) simplicial localisation.
\end{itemize}

\subsection*{Conventions}

\begin{itemize}
\item We will mostly use the same notations and definitions as in \citep{Dwyer-Kan:1980a,Dwyer-Kan:1980b,Dwyer-Kan:1980c}.

\item We will also need the notions of `homotopical equivalence', `right approximation', and `deformable functor' from \citep{DHKS}. 

\item For simplicity, we will restrict our attention to \emph{small} model categories with \emph{functorial} factorisations.

\item However, to avoid triviality, we will only assume that our model categories have \emph{finite} limits and colimits.

\item We will use underlines to indicate simplicial enrichment.
\end{itemize}

The smallness hypothesis is easily circumvented under the assumption of a suitable universe axiom, but removing the functoriality hypothesis requires a small extension of the DHKS theory of deformable functors. The author intends to address this in future work.

\subsection*{Acknowledgements}

Long discussions with Aaron Mazel-Gee helped clarify many points in the original Dwyer--Kan papers. Thanks also to Denis-Charles Cisinski for pointing out that \autoref{thm:Grothendieck.homotopy.cofinality} could be found in the work of \citet{Maltsiniotis:2005a}.

The author gratefully acknowledges financial support from the Cambridge Commonwealth, European and International Trust and the Department of Pure Mathematics and Mathematical Statistics.

\section{Homotopy colimits}

We will need several explicit models for homotopy colimits of diagrams of simplicial sets. The following are based on the formulae of \citet[\Chap XII]{Bousfield-Kan:1972}:

\begin{dfn}
\needspace{2.5\baselineskip}
Let $X : \mathcal{C} \to \cat{\SSet}$ be a small diagram.
\begin{itemize}
\item The \strong{Bousfield--Kan colimit} of $X$ is the simplicial set $\BKcolim[\mathcal{C}] X$ defined by the formula below,
\[
\parens{\textstyle \BKcolim[\mathcal{C}] X}_n = \coprod_{\tuple{c_0, \ldots, c_n}} \Hom[\mathcal{C}]{c_{n-1}}{c_n} \times \cdots \times \Hom[\mathcal{C}]{c_0}{c_1} \times X \argp{c_0}_n
\]
where the disjoint union is indexed over $\parens{n + 1}$-tuples of objects in $\mathcal{C}$, with the following face and degeneracy operators:
\begin{align*}
d^n_0 \argp{f_n, \ldots, f_1, x} & = \tuple{f_n, \ldots, f_2, d^n_0 \argp{X \argp{f_1} \argp{x}}} \\
d^n_i \argp{f_n, \ldots, f_1, x} & = \tuple{f_n, \ldots, f_{i+1} \circ f_i, \ldots, f_1, d^n_i \argp{x}} \\
d^n_n \argp{f_n, \ldots, f_1, x} & = \tuple{f_{n-1}, \ldots, f_1, d^n_n \argp{x}} \\
s^n_0 \argp{f_n, \ldots, f_1, x} & = \tuple{f_n, \ldots, f_1, \id_{c_0}, s^n_0 \argp{x}} \\
s^n_i \argp{f_n, \ldots, f_1, x} & = \tuple{f_n, \ldots, f_{i+1}, \id_{c_i}, f_i, \ldots, f_1, s^n_i \argp{x}} \\
s^n_n \argp{f_n, \ldots, f_1, x} & = \tuple{\id_{c_n}, f_n, \ldots, f_1, s^n_n \argp{x}}
\end{align*}

\item The \strong{dual Bousfield--Kan colimit} of $X$ is the simplicial set $\KBcolim[\mathcal{C}] X$ defined by the formula below,
\[
\parens{\textstyle \KBcolim[\mathcal{C}] X}_n = \coprod_{\tuple{c_0, \ldots, c_n}} X \argp{c_n}_n \times \Hom[\mathcal{C}]{c_n}{c_{n-1}} \times \cdots \times \Hom[\mathcal{C}]{c_1}{c_0}
\]
where the disjoint union is indexed over $\parens{n + 1}$-tuples of objects in $\mathcal{C}$, with the following face and degeneracy operators:
\begin{align*}
d^n_0 \argp{x, f_n, \ldots, f_1} & = \tuple{d^n_0 \argp{x}, f_n, \ldots, f_2} \\
d^n_i \argp{x, f_n, \ldots, f_1} & = \tuple{d^n_i \argp{x}, f_n, \ldots, f_i \circ f_{i+1}, \ldots, f_1} \\
d^n_n \argp{x, f_n, \ldots, f_1} & = \tuple{d^n_n \argp{X \argp{f_n} \argp{x}}, f_{n-1}, \ldots, f_1} \\
s^n_0 \argp{x, f_n, \ldots, f_1} & = \tuple{s^n_0 \argp{x}, f_n, \ldots, f_1, \id_{c_0}} \\
s^n_i \argp{x, f_n, \ldots, f_1} & = \tuple{s^n_i \argp{x}, f_n, \ldots, f_{i+1}, \id_{c_i}, f_i, \ldots, f_1} \\
s^n_n \argp{x, f_n, \ldots, f_1} & = \tuple{s^n_n \argp{x}, \id_{c_n}, f_n, \ldots, f_1}
\end{align*}
\end{itemize}
\end{dfn}

\begin{remark}
We have a natural isomorphism relating the two constructions:
\begin{align*}
\textstyle \BKcolim[\mathcal{C}] \op{X} & \cong \op{\parens{\textstyle \KBcolim[\mathcal{C}] X}} \\
\tuple{f_n, \ldots, f_1, x} & \mapsto \tuple{x, f_1, \ldots, f_n}
\end{align*}
\end{remark}

\begin{remark*}
The above convention is chosen so that the following formula holds,
\[
\textstyle \BKcolim[\mathcal{C}] \Delta 1 \cong \nv{\mathcal{C}}
\]
where $\Delta 1$ is the constant diagram of shape $\mathcal{C}$ with value $1 \cong \Delta^0$ and $\nv{\mathcal{C}}$ is the nerve of $\mathcal{C}$. Since the ``underlying simplicial set'' of a category $\mathcal{C}$ is defined to be $\op{\nv{\mathcal{C}}}$ in \citep{Bousfield-Kan:1972}, the formula for homotopy colimits appearing in \opcit actually corresponds to what we call `dual Bousfield--Kan colimit'. The same is true for the formula appearing in \citep[\Chap 18]{Hirschhorn:2003}.
\end{remark*}

\begin{lem}
\label{lem:Bousfield-Kan.homotopical.properties}
Let $\phi : X \hoto Y$ be a natural transformation of small diagrams $\mathcal{C} \to \cat{\SSet}$.
\begin{itemize}
\item If the components $\phi_c : X \argp{c} \to Y \argp{c}$ are all weak homotopy equivalences, then the induced morphism $\BKcolim[\mathcal{C}] \phi : \BKcolim[\mathcal{C}] X \to \BKcolim[\mathcal{C}] Y$ is also a weak homotopy equivalence.

\item If the components $\phi_c : X \argp{c} \to Y \argp{c}$ are all weak homotopy equivalences, then the induced morphism $\KBcolim[\mathcal{C}] \phi : \KBcolim[\mathcal{C}] X \to \KBcolim[\mathcal{C}] Y$ is also a weak homotopy equivalence.
\end{itemize}
\end{lem}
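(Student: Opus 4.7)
The plan is to exhibit $\BKcolim[\mathcal{C}] X$ as the diagonal of a bisimplicial set and invoke the standard fact that a map of bisimplicial sets which is a levelwise weak homotopy equivalence in one direction induces a weak homotopy equivalence on diagonals. Explicitly, define a bisimplicial set $Z^X$ by
\[
Z^X\sb{p,q} = \coprod\sb{\tuple{c_0, \ldots, c_p}} \Hom[\mathcal{C}]{c_{p-1}}{c_p} \times \cdots \times \Hom[\mathcal{C}]{c_0}{c_1} \times X\argp{c_0}\sb{q}
\]
where the horizontal (first index) face and degeneracy operators act as in the Bousfield--Kan formula while the vertical (second index) operators act only on the last factor $X\argp{c_0}$. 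A direct inspection of the definitions shows that the diagonal $\seq{n \mapsto Z^X\sb{n,n}}$ coincides with $\BKcolim[\mathcal{C}] X$, since the horizontal faces $d^p_0, \ldots, d^p_{p-1}$ together with the vertical faces $d^q_0, \ldots, d^q_{q-1}$ implement respectively the ``contraction'' faces $d^n_0, d^n_i$ (for $1 \le i \le n-1$) of the Bousfield--Kan formula while the composite $d^p_p d^q_q$ gives $d^n_n$, and similarly for degeneracies.

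The natural transformation $\phi$ lifts to a morphism of bisimplicial sets $Z^\phi : Z^X \to Z^Y$ whose restriction at each fixed horizontal level $p$ is the coproduct, indexed by $\tuple{c_0, \ldots, c_p}$, of the maps
\[
\Hom[\mathcal{C}]{c_{p-1}}{c_p} \times \cdots \times \Hom[\mathcal{C}]{c_0}{c_1} \times \phi\sb{c_0} : \ldots \times X\argp{c_0} \longrightarrow \ldots \times Y\argp{c_0} \text{.}
\]
Since the $\Hom$-sets are discrete simplicial sets, each such map is a coproduct of copies of $\phi\sb{c_0}$; hence it is a weak homotopy equivalence by hypothesis (and because coproducts of weak homotopy equivalences are weak homotopy equivalences). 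Applying the diagonal lemma, $Z^\phi$ becomes a weak homotopy equivalence on diagonals, which is precisely $\BKcolim[\mathcal{C}] \phi$.

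For the dual statement, I would avoid duplicating the argument by appealing to the natural isomorphism $\BKcolim[\mathcal{C}] \op{X} \cong \op{\parens{\KBcolim[\mathcal{C}] X}}$ recorded in the remark preceding the lemma. Since $\phi$ is pointwise a weak homotopy equivalence if and only if $\op{\phi}$ is, and since passage to the opposite of a simplicial set preserves weak homotopy equivalences (geometric realisation sends $\op{(\blank)}$ to the canonical self-homeomorphism of the realisation), applying the first part to $\op{\phi}$ yields the second part.

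The main obstacle is purely bookkeeping: verifying that the diagonal of $Z^X$ really does coincide with $\BKcolim[\mathcal{C}] X$ on the nose, with face and degeneracy operators matching those written out in the definition. Once this identification is in place, the homotopical content reduces entirely to the bisimplicial diagonal lemma.
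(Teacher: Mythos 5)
Your argument is correct and reproduces the standard proof that the paper simply cites (Lemma 4.2 in Bousfield--Kan, Ch.~XII, or Theorem 18.5.1 in Hirschhorn): namely, identifying $\BKcolim[\mathcal{C}] X$ with the diagonal of the auxiliary bisimplicial set $Z^X$, observing that $\phi$ induces a levelwise weak equivalence $Z^X \to Z^Y$, and invoking the bisimplicial diagonal lemma, with the dual case reduced to this one via the natural isomorphism $\BKcolim[\mathcal{C}] \op{X} \cong \op{\parens{\KBcolim[\mathcal{C}] X}}$. One small slip in your bookkeeping paragraph: every diagonal face $d^n_i$ (for all $0 \le i \le n$, not just $i = n$) is the composite $d^p_i d^q_i$ of the corresponding horizontal and vertical faces, where the horizontal $d^p_0$ also pushes $x$ forward along $X\argp{f_1}$; this imprecision does not affect the validity of the argument.
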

\begin{proof} \openproof
See Lemma 4.2 in \citep[\Chap XII]{Bousfield-Kan:1972} or Theorem 18.5.1 in \citep{Hirschhorn:2003}.
\end{proof}

\begin{lem}[The Bousfield--Kan comparison]
\label{lem:diagonal.is.hocolim}
Let $X_{\bullet}$ be a bisimplicial set and let $\rn{X_{\bullet}}$ be the diagonal simplicial set: 
\[
\rn{X_{\bullet}}_n = \parens{X_n}_n
\]
\begin{itemize}
\item There is a natural weak homotopy equivalence $\BKcolim[\op{\cat{\Simplex}}] X_{\bullet} \to \rn{X_{\bullet}}$.

\item There is a natural weak homotopy equivalence $\KBcolim[\op{\cat{\Simplex}}] X_{\bullet} \to \rn{X_{\bullet}}$.
\end{itemize}
\end{lem}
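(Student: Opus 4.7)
The plan is to realize $\BKcolim[\op{\cat{\Simplex}}] X_{\bullet}$ as the diagonal of an auxiliary bisimplicial set and then compare with $\rn{X_{\bullet}}$ via the classical realization lemma for bisimplicial sets. Let $T$ denote the bisimplicial set whose $(p,q)$-entry is the disjoint union of $X_{m_0, q}$ indexed by chains $[m_0] \to \cdots \to [m_p]$ in $\op{\cat{\Simplex}}$; this is the simplicial replacement of $X_{\bullet}$ regarded as a diagram $\op{\cat{\Simplex}} \to \cat{\SSet}$. By inspection of the face and degeneracy formulas in the definition of $\BKcolim$, the diagonal $\rn{T}$ is naturally isomorphic to $\BKcolim[\op{\cat{\Simplex}}] X_{\bullet}$ as simplicial sets.

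Next I construct a natural map of bisimplicial sets $\pi : T \to X_{\bullet}$ via the ``last vertex'' construction. At bidegree $(p,q)$, send $\tuple{f_p, \ldots, f_1, x}$ with $x \in X_{m_0, q}$ and each $f_i$ a morphism $[m_i] \to [m_{i-1}]$ in $\cat{\Simplex}$ to $X_{\bullet}(\psi)(x) \in X_{p,q}$, where $\psi : [p] \to [m_0]$ in $\cat{\Simplex}$ is the monotone map $\psi(i) = (f_1 \circ \cdots \circ f_{p-i})(m_{p-i})$. After verifying that $\pi$ is compatible with the face and degeneracy operators in both variables, the key step is to show that for each fixed $q$ the induced map $\pi_q : T_{\bullet, q} \to X_{\bullet, q}$ is a weak equivalence of simplicial sets. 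This reduces to the classical fact that for any simplicial set $Y$, the Bousfield--Kan colimit of $Y$ viewed as a discrete-valued diagram $\op{\cat{\Simplex}} \to \cat{\SSet}$ is weakly equivalent to $Y$ via the last-vertex map. The standard realization lemma for bisimplicial sets---that a column-wise weak equivalence induces a weak equivalence on diagonals, proved independently, \eg in \citep[\Chap XII]{Bousfield-Kan:1972}---then implies that $\rn{\pi} : \rn{T} \to \rn{X_{\bullet}}$ is a weak equivalence, and composing with the identification $\rn{T} \cong \BKcolim[\op{\cat{\Simplex}}] X_{\bullet}$ yields the first claim.

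The second claim follows formally from the first via the natural isomorphism $\BKcolim[\op{\cat{\Simplex}}] \op{X_{\bullet}} \cong \op{\parens{\KBcolim[\op{\cat{\Simplex}}] X_{\bullet}}}$ from the preceding remark, together with the observation that the diagonal of a bisimplicial set commutes with taking opposites and that opposites on simplicial sets preserve weak homotopy equivalences. The main obstacle is establishing the weak equivalence $\BKcolim[\op{\cat{\Simplex}}] Y \to Y$ in the discrete case without circularity, since this is essentially the conclusion of Lemma~\ref{lem:diagonal.is.hocolim} specialised to bisimplicial sets that are constant in the second variable. One clean route is to identify this last-vertex map with the canonical map from the nerve of the category of simplices of $Y$ down to $Y$ and apply Quillen's Theorem~A; alternatively, one argues by skeletal induction using that both $\BKcolim[\op{\cat{\Simplex}}]$ and the identity functor on simplicial sets preserve the relevant pushouts along boundary inclusions.
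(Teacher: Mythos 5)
The paper does not actually prove this lemma; it simply cites \citep[\Chap XII, \Sect 4.3]{Bousfield-Kan:1972} and \citep[Theorem 18.7.4]{Hirschhorn:2003}. Your overall strategy (identify $\BKcolim[\op{\cat{\Simplex}}] X_{\bullet}$ with the diagonal of the simplicial replacement $T$, map $T$ columnwise to $X_{\bullet}$, reduce to the discrete case, and invoke the bisimplicial realisation lemma) is exactly the route taken in those references, so there is no disagreement in approach.

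However, there is a concrete error in your comparison map. With the paper's conventions, an $n$-simplex of $\BKcolim[\op{\cat{\Simplex}}] X_{\bullet}$ is a tuple $\tuple{f_n, \ldots, f_1, x}$ where $f_i : [m_i] \to [m_{i-1}]$ in $\cat{\Simplex}$ and $x \in X_{m_0,n}$, and the face map $d_n$ simply drops $f_n$. Your formula $\psi(i) = (f_1 \circ \cdots \circ f_{p-i})(m_{p-i})$ makes $\psi(0) = (f_1 \circ \cdots \circ f_p)(m_p)$ depend on the whole chain including $f_p$, so compatibility with $d_p$ fails: you would need $\psi|_{[p-1]}$ to agree with the $\psi$ built from the truncated chain, which forces $\psi(i)$ to depend only on $f_1, \ldots, f_i$. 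The correct comparison for $\BKcolim$ with this orientation is the \emph{first}-vertex map $\psi(i) = (f_1 \circ \cdots \circ f_i)(0)$; the last-vertex formula you wrote down is instead a simplicial map out of $\op{T_{\bullet, q}} \cong \nv{\cat{\Simplex}/X_{\bullet, q}}$, i.e.\ it belongs to the $\KBcolim$ variant (for which $\psi(i) = (f_n \circ \cdots \circ f_{i+1})(m_i)$ works). Two smaller issues: the discrete case is not literally an application of Quillen's Theorem A since the target $Y$ is not the nerve of a category, so only your skeletal-induction alternative is sound as stated; and deducing the $\KBcolim$ statement from the $\BKcolim$ one via the isomorphism $\BKcolim[\mathcal{C}] \op{X} \cong \op{\parens{\KBcolim[\mathcal{C}] X}}$ requires care, because that isomorphism reverses only the internal (second) simplicial direction, so $\op{\rn{\op{X}}}$ is not literally $\rn{X}$ but differs by a reversal of the external direction as well; this is harmless up to natural weak equivalence, but it should be said.
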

\begin{proof} \openproof
See paragraph 4.3 in \citep[\Chap XII]{Bousfield-Kan:1972} or Theorem 18.7.4 in \citep{Hirschhorn:2003}.
\end{proof}

We will also need two versions of the Grothendieck construction:

\begin{dfn}
Let $\mathcal{X} : \mathcal{C} \to \cat{\Cat}$ be a small diagram.
\begin{itemize}
\item The \strong{lax colimit} for $\mathcal{X}$ is the category $\laxcolim[\mathcal{C}] \mathcal{X}$ defined below:
\begin{itemize}
\item The objects are pairs $\tuple{c, x}$ where $c$ is an object in $\mathcal{C}$ and $x$ is an object in $\mathcal{X} \argp{c}$.

\item The morphisms $\tuple{c', x'} \to \tuple{c, x}$ are pairs $\tuple{f, g}$ where $f : c' \to c$ is a morphism in $\mathcal{C}$ and $g : \mathcal{X} \argp{f} \argp{x'} \to x$ is a morphism in $\mathcal{X} \argp{c}$.

\item Composition and identities are inherited from $\mathcal{C}$ and $\mathcal{X}$.
\end{itemize}

\needspace{2.5\baselineskip}
\item The \strong{oplax colimit} for $\mathcal{X}$ is the category $\oplaxcolim[\mathcal{C}] \mathcal{X}$ defined below:
\begin{itemize}
\item The objects are pairs $\tuple{c, x}$ where $c$ is an object in $\mathcal{C}$ and $x$ is an object in $\mathcal{X} \argp{c}$.

\item The morphisms $\tuple{c', x'} \to \tuple{c, x}$ are pairs $\tuple{f, g}$ where $f : c \to c'$ is a morphism in $\mathcal{C}$ and $g : x' \to \mathcal{X} \argp{f} \argp{x}$ is a morphism in $\mathcal{X} \argp{c'}$.

\item Composition and identities are inherited from $\mathcal{C}$ and $\mathcal{X}$.
\end{itemize}
\end{itemize}
\end{dfn}

\begin{remark*}
It may help to observe that the canonical projection $\laxcolim[\mathcal{C}] \mathcal{X} \to \mathcal{C}$ is a Grothendieck opfibration, whereas the canonical projection $\oplaxcolim[\mathcal{C}] \mathcal{X} \to \op{\mathcal{C}}$ is a Grothendieck fibration. Thus $\oplaxcolim[\mathcal{C}] \mathcal{X}$ is the original Grothendieck construction.
\end{remark*}

\begin{remark*}
In the notation of \citet{Dwyer-Kan:1980b}, $\laxcolim[\mathcal{C}] \mathcal{X}$ is ${*} \otimes_\mathcal{C} \mathcal{X}$, and $\oplaxcolim[\mathcal{C}] \mathcal{X}$ is $\mathcal{X} \otimes_{\op{\mathcal{C}}} {*}$.
\end{remark*}

The notation $\laxcolim[\mathcal{C}] \mathcal{X}$ is in honour of the following result of \citet{Thomason:1979}:

\begin{thm}[Thomason's homotopy colimit theorem]
\label{thm:Thomason.hocolim}
Let $\mathcal{X} : \mathcal{C} \to \cat{\Cat}$ be a small diagram.
\begin{itemize}
\item There is a weak homotopy equivalence
\[
\textstyle \BKcolim[\mathcal{C}] \nv \circ \mathcal{X} \to \nv{\laxcolim[\mathcal{C}] \mathcal{X}}
\]
which is moreover natural in $\mathcal{C}$ and $\mathcal{X}$.

\item There is a weak homotopy equivalence
\[
\textstyle \KBcolim[\mathcal{C}] \nv \circ \mathcal{X} \to \nv{\oplaxcolim[\mathcal{C}] \mathcal{X}}
\]
which is moreover natural in $\mathcal{C}$ and $\mathcal{X}$.
\end{itemize}
\end{thm}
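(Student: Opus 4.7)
The plan is to construct the Thomason comparison map $\eta_{\mathcal{X}}$ explicitly and then show it is a weak homotopy equivalence via a bisimplicial d\'evissage that reduces the problem to \autoref{lem:Bousfield-Kan.homotopical.properties} and \autoref{lem:diagonal.is.hocolim}.

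First, I define $\eta_{\mathcal{X}} : \BKcolim[\mathcal{C}] \nv \circ \mathcal{X} \to \nv(\laxcolim[\mathcal{C}] \mathcal{X})$ on a generating $n$-simplex $\tuple{f_n, \ldots, f_1, (x_0 \to \cdots \to x_n)}$, where $f_i : c_{i-1} \to c_i$ in $\mathcal{C}$ and the chain lies in $\mathcal{X}(c_0)$, as the $n$-chain in $\laxcolim[\mathcal{C}] \mathcal{X}$ whose $i$th vertex is $(c_i, \mathcal{X}(f_i \circ \cdots \circ f_1)(x_i))$ and whose structure morphism at step $i$ is the pair $(f_i, \mathcal{X}(f_i \circ \cdots \circ f_1)(x_{i-1} \to x_i))$. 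A direct check confirms simpliciality and naturality in both $\mathcal{C}$ and $\mathcal{X}$, and exhibits $\eta_{\mathcal{X}}$ as an isomorphism when $\mathcal{X}$ is a constant diagram at a category $\mathcal{D}$, since both sides then reduce canonically to $\nv(\mathcal{C}) \times \nv(\mathcal{D})$. The dual statement for $\KBcolim$ and $\oplaxcolim$ follows by applying $\op{(-)}$ throughout and invoking the Remark relating the two Bousfield--Kan constructions.

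For a general diagram, I would model both sides as diagonals of bisimplicial sets. The Bousfield--Kan colimit is by construction the diagonal of the bisimplicial set $M$ defined by $M_{p,q} = \coprod_{c_0 \to \cdots \to c_p} \nv(\mathcal{X}(c_0))_q$. The nerve $\nv(\laxcolim[\mathcal{C}] \mathcal{X})$ admits a parallel bisimplicial presentation $N$ arising from the canonical Grothendieck opfibration $\laxcolim[\mathcal{C}] \mathcal{X} \to \mathcal{C}$, and $\eta_{\mathcal{X}}$ lifts to a bisimplicial map $M \to N$. By \autoref{lem:diagonal.is.hocolim}, both diagonals are naturally weakly equivalent to Bousfield--Kan homotopy colimits indexed over $\op{\cat{\Simplex}}$, whereupon \autoref{lem:Bousfield-Kan.homotopical.properties} reduces the problem to showing that $M \to N$ is a levelwise weak equivalence.

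The hard part will be pinning down the right bisimplicial model $N$ and verifying the levelwise weak equivalence. For each $p$-simplex $\sigma = (c_0 \to \cdots \to c_p)$ of $\nv(\mathcal{C})$, this reduces to showing that $\nv(\mathcal{X}(c_0))$ is weakly equivalent to the nerve of the category of compatible sections of $\mathcal{X}$ over $\sigma$. The iterated pushforward functor --- sending $x_0 \in \mathcal{X}(c_0)$ to $(x_0, \mathcal{X}(f_1)(x_0), \ldots, \mathcal{X}(f_p \circ \cdots \circ f_1)(x_0))$ with identity structure maps --- is the comparison functor, and the ``initial component'' functor in the opposite direction, together with a natural transformation between their composite and the identity, produces an explicit simplicial homotopy on nerves establishing the required weak equivalence. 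Alternatively, one may invoke directly the Grothendieck homotopy-cofinality theorem of \citet{Maltsiniotis:2005a} cited in the acknowledgements.
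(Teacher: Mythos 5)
The paper does not actually prove this statement; it records it as Thomason's theorem and cites \citet{Thomason:1979}, so your proposal is an original attempt at filling in a proof. Your definition of the comparison map is correct, and the general strategy (a bisimplicial d\'evissage reducing via \autoref{lem:diagonal.is.hocolim} to a levelwise weak equivalence) is the right one in spirit. However, there is a genuine gap at the crux.

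You assert that ``$\nv(\laxcolim[\mathcal{C}] \mathcal{X})$ admits a parallel bisimplicial presentation $N$ arising from the canonical Grothendieck opfibration,'' but you never construct $N$, and this is precisely where the work lies. The only $N$ consistent with your levelwise argument is $N_{p,\bullet} = \coprod_{\sigma : [p] \to \mathcal{C}} \nv(\Sigma(\sigma))$, where $\Sigma(\sigma)$ is the category of (lax) sections of $\sigma^*\mathcal{X}$ over $[p]$ --- equivalently, the set of functors $[p] \times [q] \to \laxcolim[\mathcal{C}]\mathcal{X}$ whose projection to $\mathcal{C}$ is constant in the $[q]$-direction. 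Your pushforward/initial-component retraction does give a simplicial homotopy equivalence $\nv(\mathcal{X}(c_0)) \simeq \nv(\Sigma(\sigma))$ at each $\sigma$. But the diagonal $\rn{N}$ is \emph{not} isomorphic to $\nv(\laxcolim[\mathcal{C}]\mathcal{X})$: an element of $N_{n,n}$ is an entire $[n] \times [n]$ grid of objects in the Grothendieck construction, which is strictly more data than a single $n$-chain. The restriction-to-the-diagonal map $\rn{N} \to \nv(\laxcolim[\mathcal{C}]\mathcal{X})$ exists and does intertwine your $\eta_{\mathcal{X}}$, but proving it is a weak homotopy equivalence is a separate and nontrivial claim of the same order of difficulty as the theorem itself --- your argument silently treats it as an identity. (By contrast, $\rn{M_{\bullet}}$ really is the Bousfield--Kan colimit on the nose, which is why that side of the reduction is free.) Either you must pick a different $N$ whose diagonal genuinely is $\nv(\laxcolim[\mathcal{C}]\mathcal{X})$ --- in which case the rows are not of the form $\coprod_\sigma \nv(\Sigma(\sigma))$ and your levelwise retraction no longer applies directly --- or you must separately prove the weak equivalence $\rn{N} \to \nv(\laxcolim[\mathcal{C}]\mathcal{X})$, for which the usual tools are Quillen's Theorem A or a further contractible-fibre d\'evissage. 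The fallback invocation of Maltsiniotis is not developed enough to close this hole either, since \autoref{thm:Grothendieck.homotopy.cofinality} as stated in the paper concerns asphericity of pullbacks of the Grothendieck construction rather than the homotopy colimit comparison itself.
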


In addition, we need a homotopy cofinality theorem. Following \citep{Grothendieck:1983}:

\begin{dfn}
\ \noprelistbreak
\begin{itemize}
\item A \strong{left aspherical functor} is a functor $u : \mathcal{A} \to \mathcal{B}$ such that, for each object $b$ in $\mathcal{B}$, the nerve of the comma category $\commacat{b}{u}$ is a weakly contractible simplicial set.

\item A \strong{right aspherical functor} is a functor $u : \mathcal{A} \to \mathcal{B}$ such that, for each object $b$ in $\mathcal{B}$, the nerve of the comma category $\commacat{u}{b}$ is a weakly contractible simplicial set.
\end{itemize}
\end{dfn}

\begin{remark}
Since a simplicial set $X$ is weakly contractible if and only if $\op{X}$ is weakly contractible, a functor $u : \mathcal{A} \to \mathcal{B}$ is left aspherical if and only if $\op{u} : \op{\mathcal{A}} \to \op{\mathcal{B}}$ is right aspherical.
\end{remark}

The homotopical significance of these functors is hinted at by a result of \citet[\Sect 1]{Quillen:1973a}:

\begin{thm}[Quillen's Theorem A]
\label{thm:Quillen-A}
If $u : \mathcal{A} \to \mathcal{B}$ is either a left or right aspherical functor, then $\nv{u} : \nv{\mathcal{A}} \to \nv{\mathcal{B}}$ is a weak homotopy equivalence. 
\end{thm}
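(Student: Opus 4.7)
The plan is to reduce to the right aspherical case (the left case follows by the preceding remark, since $\nv{\op{\mathcal{C}}} = \op{\nv{\mathcal{C}}}$ and $\op{\phantom{X}}$ preserves weak homotopy equivalences), and then to exhibit $\nv{u}$ as a composite of two weak homotopy equivalences arising from comparison maps into and out of $\nv\bigl(\laxcolim[\mathcal{B}] \mathcal{X}\bigr)$, where $\mathcal{X} : \mathcal{B} \to \cat{\Cat}$ is the functor $b \mapsto \commacat{u}{b}$. Here the functoriality of $\mathcal{X}$ is by post-composition, and right asphericity means precisely that $\nv \circ \mathcal{X}$ is a pointwise weakly contractible diagram.

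First I would construct a section $s : \mathcal{A} \to \laxcolim[\mathcal{B}] \mathcal{X}$ of the forgetful functor $p : \laxcolim[\mathcal{B}] \mathcal{X} \to \mathcal{A}$ by the assignment $a \mapsto \tuple{u(a), \tuple{a, \id_{u(a)}}}$, and check directly that $\tuple{f, \id_a}$ defines the components of a natural transformation $s \circ p \hoto \id$. This makes $\nv{s}$ and $\nv{p}$ mutually inverse weak homotopy equivalences. Moreover, if $\pi : \laxcolim[\mathcal{B}] \mathcal{X} \to \mathcal{B}$ denotes the canonical projection, then by inspection $\pi \circ s = u$, so that $\nv{u} = \nv{\pi} \circ \nv{s}$ on the nose.

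Next I would use Thomason's homotopy colimit theorem (\autoref{thm:Thomason.hocolim}) applied to the evident natural transformation $\mathcal{X} \hoto \Delta 1$ (where $\Delta 1$ is the constant diagram at the terminal category) to obtain a commutative square
\[
\begin{array}{ccc}
\BKcolim[\mathcal{B}] \nv \circ \mathcal{X} & \to & \nv{\laxcolim[\mathcal{B}] \mathcal{X}} \\
\downarrow & & \downarrow \nv{\pi} \\
\BKcolim[\mathcal{B}] \Delta 1 & \cong & \nv{\mathcal{B}}
\end{array}
\]
whose horizontal arrows are weak homotopy equivalences by Thomason, and whose left-hand vertical arrow is a weak homotopy equivalence by \autoref{lem:Bousfield-Kan.homotopical.properties}, using the right asphericity of $u$ to ensure each component $\nv\argp{\commacat{u}{b}} \to \Delta 1$ is a weak equivalence. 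By the 2-out-of-3 property for weak homotopy equivalences of simplicial sets, $\nv{\pi}$ is then also a weak homotopy equivalence, and consequently $\nv{u} = \nv{\pi} \circ \nv{s}$ is as well.

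I expect the routine part to be the direct verification of the section $s$ and the natural transformation $s \circ p \hoto \id$, which reduces to chasing the composition law in $\laxcolim[\mathcal{B}] \mathcal{X}$; the only delicate point is keeping straight that the variance of $\mathcal{X}(\beta) : \commacat{u}{b} \to \commacat{u}{b'}$ (post-composition by $\beta$) matches the covariant convention used in the definition of the lax colimit. Everything else is imported from \autoref{thm:Thomason.hocolim} and \autoref{lem:Bousfield-Kan.homotopical.properties}.
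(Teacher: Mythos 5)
The paper itself does not prove Quillen's Theorem A: it is stated as an imported result, with the attribution \citep[\Sect 1]{Quillen:1973a}, and the proof environment is absent. Your proposal therefore does not conflict with anything in the paper; it supplies a correct, self-contained argument within the machinery set up in Section~1. The key steps all check out: the reduction to the right aspherical case via the opposite-category remark is legitimate; the assignment $a \mapsto \tuple{u(a), \tuple{a, \id_{u(a)}}}$ does define a functor $s : \mathcal{A} \to \laxcolim[\mathcal{B}] \mathcal{X}$ splitting the forgetful functor $p$ (with the paper's convention that a morphism $\tuple{b', x'} \to \tuple{b, x}$ in the lax colimit carries $f : b' \to b$ and $g : \mathcal{X}(f)(x') \to x$); the component $\tuple{\alpha, \id_a}$ at $\tuple{b, \tuple{a, \alpha}}$ really is a natural transformation $s \circ p \hoto \id$, so $\nv{s}$ and $\nv{p}$ are homotopy inverse; and the 2-out-of-3 argument on the naturality square for \autoref{thm:Thomason.hocolim}, using \autoref{lem:Bousfield-Kan.homotopical.properties} on the pointwise weak equivalence $\nv \circ \mathcal{X} \hoto \Delta 1$ and the isomorphism $\BKcolim[\mathcal{B}] \Delta 1 \cong \nv{\mathcal{B}}$, correctly forces $\nv{\pi}$ to be a weak homotopy equivalence.

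One methodological caveat is worth flagging: most published proofs of Thomason's homotopy colimit theorem invoke Quillen's Theorem A (or B) at some point, so read in isolation your derivation runs a risk of circularity. Within the present paper this is not a defect in your reasoning, since \autoref{thm:Thomason.hocolim} is itself imported without proof; but it explains why the paper cites Quillen directly rather than deducing Theorem A from Thomason. Quillen's own proof is a direct bisimplicial-set argument: it assembles a bisimplicial set from the comma categories whose two ``fibrewise'' nerves are compared to $\nv{\mathcal{A}}$ and $\nv{\mathcal{B}}$ respectively, then applies the realisation lemma for bisimplicial sets. Your route is shorter and reuses the paper's homotopy-colimit toolkit, at the cost of depending on the (unproved-in-paper, and logically downstream) Thomason theorem.
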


However, one can say more. The following result is originally due to \citet{Grothendieck:1991}.

\begin{thm}[Homotopy cofinality]
\label{thm:Grothendieck.homotopy.cofinality}
Let $u : \mathcal{A} \to \mathcal{B}$ be a functor between small categories and let $\mathcal{X} : \mathcal{B} \to \cat{\Cat}$ be a diagram.
\begin{enumerate}[(i)]
\item There are pullback diagrams in $\cat{\Cat}$ of the forms below,
\[
\mmhfill
\begin{tikzcd}
\laxcolim[\mathcal{A}] \mathcal{X} \circ u \dar \rar &
\laxcolim[\mathcal{B}] \mathcal{X} \dar \\
\mathcal{A} \rar[swap]{u} &
\mathcal{B}
\end{tikzcd}
\mmhfill
\begin{tikzcd}
\oplaxcolim[\mathcal{A}] \mathcal{X} \circ u \dar \rar &
\oplaxcolim[\mathcal{B}] \mathcal{X} \dar \\
\op{\mathcal{A}} \rar[swap]{\op{u}} &
\op{\mathcal{B}}
\end{tikzcd}
\mmhfill
\]
where the vertical arrows are the canonical projections, the top horizontal arrows are functorial in $\mathcal{X}$, and in the left (\resp right) diagram, the top horizontal arrow is an opcartesian (\resp cartesian) functor.

\item If $u : \mathcal{A} \to \mathcal{B}$ is left aspherical, then $\laxcolim[\mathcal{A}] \mathcal{X} \circ u \to \laxcolim[\mathcal{B}] \mathcal{X}$ is left aspherical and $\oplaxcolim[\mathcal{A}] \mathcal{X} \circ u \to \oplaxcolim[\mathcal{B}] \mathcal{X}$ is right aspherical.
\end{enumerate}
\end{thm}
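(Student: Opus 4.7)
The plan is first to dispatch part (i) as a formal verification: an object of $\laxcolim[\mathcal{B}] \mathcal{X}$ whose projection to $\mathcal{B}$ lies in the image of $u$ corresponds uniquely to an object of $\laxcolim[\mathcal{A}] \mathcal{X} \circ u$, and likewise for morphisms, so the displayed square is a pullback in $\cat{\Cat}$. The opcartesian (\resp cartesian) morphisms in $\laxcolim[\mathcal{C}] \mathcal{X}$ (\resp $\oplaxcolim[\mathcal{C}] \mathcal{X}$) are precisely those whose second component is an identity, and these are evidently preserved by the canonical functor between Grothendieck constructions.

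The main content is part (ii), which I treat in the lax case; the oplax case is analogous. Write $v : \laxcolim[\mathcal{A}] \mathcal{X} \circ u \to \laxcolim[\mathcal{B}] \mathcal{X}$ for the canonical functor and fix an object $\tuple{b, x}$ in $\laxcolim[\mathcal{B}] \mathcal{X}$. Define a functor $F : \commacat{b}{u} \to \cat{\Cat}$ by sending $\tuple{a, g : b \to u \argp{a}}$ to the under-category $\undercat{\mathcal{X} \argp{g} \argp{x}}{\mathcal{X} \argp{u \argp{a}}}$ and a morphism $f : \tuple{a, g} \to \tuple{a', g'}$ to the functor induced by $\mathcal{X} \argp{u \argp{f}}$; this is well-defined because $u \argp{f} \circ g = g'$ forces $\mathcal{X} \argp{g'} \argp{x} = \mathcal{X} \argp{u \argp{f}} \argp{\mathcal{X} \argp{g} \argp{x}}$. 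The key step is to establish a natural isomorphism
\[
\commacat{\tuple{b, x}}{v} \cong \textstyle \laxcolim[\commacat{b}{u}] F
\]
by unwinding both sides to the same data: quadruples $\tuple{a, g, y, h}$ with $g : b \to u \argp{a}$ and $h : \mathcal{X} \argp{g} \argp{x} \to y$, together with morphisms of the form $\tuple{f : a \to a', k : \mathcal{X} \argp{u \argp{f}} \argp{y} \to y'}$ satisfying $u \argp{f} \circ g = g'$ and $k \circ \mathcal{X} \argp{u \argp{f}} \argp{h} = h'$.

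Each $F \argp{a, g}$ has the initial object $\tuple{\mathcal{X} \argp{g} \argp{x}, \id}$, so its nerve is weakly contractible. Applying \autoref{thm:Thomason.hocolim} and \autoref{lem:Bousfield-Kan.homotopical.properties} then yields a chain of weak homotopy equivalences
\[
\nv{\textstyle \laxcolim[\commacat{b}{u}] F} \simeq \textstyle \BKcolim[\commacat{b}{u}] \nv \circ F \simeq \textstyle \BKcolim[\commacat{b}{u}] \Delta 1 \cong \nv{\commacat{b}{u}}
\]
whose right-hand side is weakly contractible by the left asphericity of $u$. Hence $v$ is left aspherical.

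The oplax case runs identically with $F$ replaced by $G \argp{a, g} = \overcat{\mathcal{X} \argp{u \argp{a}}}{\mathcal{X} \argp{g} \argp{x}}$ (which has a terminal object), using the second parts of \autoref{thm:Thomason.hocolim} and \autoref{lem:Bousfield-Kan.homotopical.properties} together with the dual formula $\KBcolim[\mathcal{C}] \Delta 1 \cong \op{\nv{\mathcal{C}}}$. The main obstacle is verifying the natural isomorphism between $\commacat{\tuple{b, x}}{v}$ and the lax colimit of $F$; once this bookkeeping is in place, the remainder is a direct application of results already established.
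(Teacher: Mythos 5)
Your argument is correct, and it is a genuinely different route from the paper's: the paper dispatches part (ii) by citing Corollaire 4.16 of Maltsiniotis (together with his Exemple 2.3 and D\'efinition 4.6), whereas you give a self-contained proof that stays entirely within the toolkit already set up in \Sect 1. The key idea --- unwinding $\commacat{\tuple{b,x}}{v}$ as a lax colimit $\laxcolim[\commacat{b}{u}] F$ of under-categories (each with an initial object, hence with weakly contractible nerve), then collapsing via \autoref{thm:Thomason.hocolim}, \autoref{lem:Bousfield-Kan.homotopical.properties}, and the identity $\BKcolim[\mathcal{C}] \Delta 1 \cong \nv{\mathcal{C}}$ --- is exactly the right move, and your bookkeeping for the isomorphism $\commacat{\tuple{b,x}}{v} \cong \laxcolim[\commacat{b}{u}] F$ checks out on objects and morphisms. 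This buys the reader a proof that is verifiable without leaving the paper, at the cost of a page of unwinding; the paper's citation buys brevity at the cost of sending the reader to an external (French-language) reference. One small imprecision worth fixing: in part (i), the opcartesian morphisms of $\laxcolim[\mathcal{C}] \mathcal{X}$ are those whose second component is an \emph{isomorphism}, not only an identity; since identities map to identities under the canonical functor this does not affect the conclusion, but the characterisation as stated is too narrow.
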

\begin{proof}
(i). Straightforward.

\bigskip\noindent
(ii). The two halves of the claim are formally dual; the second version is a special case of Corollaire 4.16 in \citep{Maltsiniotis:2005a} (in view of Exemple 2.3 and Définition 4.6 in \opcit).
\end{proof}

\section{Derived hom-spaces}

Let $\mathcal{M}$ be a model category and let $\mbfs \mathcal{M}$ (\resp $\mbfc \mathcal{M}$) be the category of simplicial (\resp cosimplicial) objects in $\mathcal{M}$. As is well known,\footnote{See \citep[\Sect 5.2]{Hovey:1999} or \citep[\Chap 15]{Hirschhorn:2003}.} $\mbfs \mathcal{M}$ and $\mbfc \mathcal{M}$ have Reedy model structures, wherein the weak equivalences are the morphisms that are degreewise weak equivalences in $\mathcal{M}$.

\begin{prop}
\label{prop:simplicial.object.Quillen.adjunction}
\needspace{3\baselineskip}
Let $\pblank_0 : \mbfs \mathcal{M} \to \mathcal{M}$ be the functor that sends a simplicial object $B_{\bullet}$ in $\mathcal{M}$ to the component $B_0$. Then:
\begin{enumerate}[(i)]
\item $\pblank_0 : \mbfs \mathcal{M} \to \mathcal{M}$ has a left adjoint, namely the functor $\skel[0] : \mathcal{M} \to \mbfs \mathcal{M}$ that sends each object $A$ in $\mathcal{M}$ to the constant simplicial object with value $A$.

\item The adjunction
\[
\skel[0] \dashv \pblank_0 : \mbfs \mathcal{M} \to \mathcal{M}
\]
is a Quillen adjunction, and the unit is an isomorphism.

\item The induced functor $\Ho \skel[0] : \Ho \mathcal{M} \to \Ho \mbfs \mathcal{M}$ is fully faithful.
\end{enumerate}
Dually, let $\pblank^0 : \mbfc \mathcal{M} \to \mathcal{M}$ be the functor that sends a cosimplicial object $A^{\bullet}$ in $\mathcal{M}$ to the component $A^0$. Then:
\begin{enumerate}[(i\prime)]
\item $\pblank^0 : \mbfc \mathcal{M} \to \mathcal{M}$ has a right adjoint, namely the functor $\ccoskel[0] : \mathcal{M} \to \mbfc \mathcal{M}$ that sends each object $B$ in $\mathcal{M}$ to the constant cosimplicial object with value $B$.

\item The adjunction
\[
\pblank^0 \dashv \ccoskel[0] : \mathcal{M} \to \mbfc \mathcal{M}
\]
is a Quillen adjunction, and the counit is an isomorphism.

\item The induced functor $\Ho \ccoskel[0] : \Ho \mathcal{M} \to \Ho \mbfc \mathcal{M}$ is fully faithful.
\end{enumerate}
\end{prop}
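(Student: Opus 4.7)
The two halves of the statement are formally dual, so I would only treat the simplicial side and note that the cosimplicial side follows by passing to opposite categories (exchanging left/right adjoints, fibrations/cofibrations, \etc).

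For (i), I would construct the adjunction by hand. A morphism $\varphi : \skel[0] A \to B_\bullet$ in $\mbfs\mathcal{M}$ is a family of maps $\varphi_n : A \to B_n$ compatible with faces and degeneracies of $B_\bullet$ (the faces and degeneracies on $\skel[0] A$ being identities). Given $\varphi_0$, define $\varphi_n$ as the iterated degeneracy $s_0 \cdots s_0 \circ \varphi_0$; the simplicial identities applied to $B_\bullet$ show that the resulting family is a natural transformation and that every natural transformation arises this way, giving the bijection $\Hom_{\mbfs\mathcal{M}}(\skel[0] A, B_\bullet) \cong \Hom_\mathcal{M}(A, B_0)$, natural in both variables. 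The unit at $A$ is the identity $A \to (\skel[0] A)_0 = A$, hence an isomorphism.

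For (ii), I would verify that the right adjoint $\pblank_0$ preserves fibrations and trivial fibrations with respect to the Reedy model structure. Since the latching category at $[0] \in \op{\cat{\Simplex}}$ is empty, the matching object $M_0 B_\bullet$ is terminal, so by the very definition of the Reedy model structure a Reedy (trivial) fibration $B_\bullet \to C_\bullet$ induces a (trivial) fibration $B_0 \to C_0$ in $\mathcal{M}$. Thus $\skel[0] \dashv \pblank_0$ is a Quillen adjunction.

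For (iii), I would invoke the standard fact that if the derived unit of a Quillen adjunction $F \dashv G$ is a weak equivalence at every cofibrant object, then $\totalL F : \Ho \mathcal{M} \to \Ho \mbfs\mathcal{M}$ is fully faithful. To check this, let $A$ be cofibrant in $\mathcal{M}$. First I would observe that $\skel[0] A$ is Reedy cofibrant: the latching object $L_n \skel[0] A$ at level $n \geq 1$ is computed as a colimit over a connected category with the constant diagram $A$, hence is $A$, and the latching map $L_n \skel[0] A \to (\skel[0] A)_n$ is the identity; at $n = 0$ the latching object is initial and the latching map is $0 \to A$, a cofibration. Now let $\skel[0] A \to \tilde B_\bullet$ be a Reedy fibrant replacement; it is a degreewise weak equivalence, so its level-$0$ component $A = (\skel[0] A)_0 \to \tilde B_0$ is a weak equivalence. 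Since the unit is the identity, this composite is exactly the derived unit, and it is a weak equivalence; this gives (iii).

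The only slightly delicate step is the Reedy cofibrancy of $\skel[0] A$, but it reduces to an unwinding of the latching construction for the constant simplicial diagram; everything else is routine.
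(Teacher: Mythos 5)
Your proof is correct, and since the paper dismisses this proposition with ``Straightforward,'' the detailed verification you give is exactly what a careful reader would supply. Two small points worth noting. In part (ii) you write that ``the latching category at $[0]\in\op{\cat{\Simplex}}$ is empty, so the matching object $M_0 B_\bullet$ is terminal'' --- the conclusion is right, but the justification should refer to the \emph{matching} category at $[0]$ being empty (it is: there are no non-identity maps from $[0]$ in the inverse subcategory, as those are the injections into $[0]$ from lower degree), not the latching category. Both happen to be empty here, but the one controlling the matching object is the matching category. In part (iii), the explicit verification that $\skel[0] A$ is Reedy-cofibrant for cofibrant $A$ --- which you flag as the ``delicate step'' --- is actually redundant: it follows immediately from (ii), since a left Quillen functor preserves cofibrant objects ($\skel[0]$ sends the initial object to the initial object). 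All you really need in (iii) is that $\skel[0]$ sends weak equivalences to Reedy weak equivalences (so $\totalL\skel[0] A \simeq \skel[0] A$ for cofibrant $A$) together with the observation that a Reedy fibrant replacement $\skel[0] A \to \tilde B_\bullet$ is in particular a degreewise weak equivalence, so that the derived unit $A = (\skel[0] A)_0 \to \tilde B_0$ is a weak equivalence. Your unwinding of the latching objects at levels $n\geq 1$ (colimit of a constant diagram over a connected category --- indeed the latching category there has a terminal object, the unique surjection $[n]\to[0]$) is nevertheless a correct and potentially instructive calculation.
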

\begin{proof} \obviousproof
Straightforward.
\end{proof}

\begin{dfn}
\ \noprelistbreak
\begin{itemize}
\item A \strong{weakly constant simplicial object} in $\mathcal{M}$ is a simplicial object $B_{\bullet}$ such that the counit $\skel[0]{B_0} \to B_{\bullet}$ is a Reedy weak equivalence in $\mbfs \mathcal{M}$. 

We write $\mbfs_\mathrm{w} \mathcal{M}$ for the full subcategory of $\mbfs \mathcal{M}$ spanned by the weakly constant simplicial objects.

\item A \strong{weakly constant cosimplicial object} in $\mathcal{M}$ is a cosimplicial object $A^{\bullet}$ such that the unit $A^{\bullet} \to \ccoskel[0]{A^0}$ is a Reedy weak equivalence in $\mbfc \mathcal{M}$.

We write $\mbfc_\mathrm{w} \mathcal{M}$ for the full subcategory of $\mbfc \mathcal{M}$ spanned by the weakly constant cosimplicial objects.

\item A \strong{simplicial resolution} in $\mathcal{M}$ is an weakly constant simplicial object in $\mathcal{M}$ that is also Reedy-fibrant in $\mbfs \mathcal{M}$.

We write $\mbfs_\mathrm{r} \mathcal{M}$ for the full subcategory of $\mbfs \mathcal{M}$ spanned by the simplicial resolutions.

\item A \strong{cosimplicial resolution} in $\mathcal{M}$ is an weakly constant simplicial object in $\mathcal{M}$ that is also Reedy-cofibrant in $\mbfc \mathcal{M}$.

We write $\mbfc_\mathrm{r} \mathcal{M}$ for the full subcategory of $\mbfs \mathcal{M}$ spanned by the cosimplicial resolutions.
\end{itemize}
\end{dfn}

\begin{cor}
\ \noprelistbreak
\begin{enumerate}[(i)]
\item The adjunction
\[
\skel[0] \dashv \pblank_0 : \mbfs_\mathrm{w} \mathcal{M} \to \mathcal{M}
\]
is a adjoint homotopical equivalence of homotopical categories.

\item The induced adjunction
\[
\Ho \skel[0] \dashv \Ho \pblank_0 : \Ho \mbfs_\mathrm{w} \mathcal{M} \to \Ho \mathcal{M}
\]
is an adjoint equivalence of categories.
\end{enumerate}
Dually:
\begin{enumerate}[(i)]
\item The adjunction
\[
\pblank^0 \dashv \ccoskel[0] : \mathcal{M} \to \mbfc_\mathrm{w} \mathcal{M}
\]
is an adjoint homotopical equivalence of homotopical categories.

\item The induced adjunction
\[
\Ho \pblank^0 \dashv \Ho \ccoskel[0] : \Ho \mathcal{M} \to \Ho \mbfc_\mathrm{w} \mathcal{M}
\]
is an adjoint equivalence of categories.
\end{enumerate}
\end{cor}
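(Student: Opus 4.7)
The plan is to derive part (i) almost directly from \autoref{prop:simplicial.object.Quillen.adjunction} combined with the very definition of $\mbfs_\mathrm{w} \mathcal{M}$; part (ii) then follows by general nonsense about homotopical equivalences descending to equivalences of homotopy categories.

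First I would check that the adjunction restricts: $\skel[0]$ obviously factors through $\mbfs_\mathrm{w} \mathcal{M}$, since a constant simplicial object is weakly constant (the relevant counit is an identity), and $\pblank_0$ is unchanged. Second, I would verify that both functors are homotopical with respect to the inherited weak equivalences. For $\skel[0]$, a weak equivalence $f : A \to A'$ in $\mathcal{M}$ is sent to a degreewise weak equivalence between constant simplicial objects, hence to a Reedy weak equivalence. For $\pblank_0$, a Reedy weak equivalence is in particular a degreewise weak equivalence, so its component at $0$ is a weak equivalence in $\mathcal{M}$.

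Next, I would show that the unit and counit are weak equivalences. The unit $\id_{\mathcal{M}} \hoto \pblank_0 \circ \skel[0]$ is an isomorphism by \autoref{prop:simplicial.object.Quillen.adjunction}(ii), hence a weak equivalence. The counit $\skel[0] \circ \pblank_0 \hoto \id_{\mbfs_\mathrm{w} \mathcal{M}}$ evaluated at a weakly constant $B_{\bullet}$ is precisely the canonical map $\skel[0] B_0 \to B_{\bullet}$, which is a Reedy weak equivalence \emph{by the very definition} of weakly constant simplicial object. Together with the previous step, this is exactly the data of an adjoint homotopical equivalence of homotopical categories in the sense of DHKS, proving (i).

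Part (ii) follows formally: homotopical functors descend to functors on localisations, an adjunction between homotopical functors descends to an adjunction on localisations, and weak-equivalence unit and counit become isomorphism unit and counit after localisation. The dual statements (the $\pblank^0 \dashv \ccoskel[0]$ versions) follow by the same argument applied to $\op{\mathcal{M}}$, using that the Reedy model structure on $\mbfc \mathcal{M}$ corresponds under opposites to the Reedy model structure on $\mbfs \op{\mathcal{M}}$. The main obstacle, such as there is one, is simply to make sure the definitions of `weakly constant' and `adjoint homotopical equivalence' line up as claimed; no nontrivial calculation is needed.
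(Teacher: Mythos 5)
Your argument is correct and is precisely the unpacking of what the paper dismisses as an \enquote{immediate consequence of the definitions and \autoref{prop:simplicial.object.Quillen.adjunction}}: the unit is an isomorphism by that proposition, the counit is a Reedy weak equivalence by the very definition of $\mbfs_\mathrm{w}\mathcal{M}$, and both functors are plainly homotopical. No gap; same approach as the paper, just spelled out.
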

\begin{proof}
This is an immediate consequence of the definitions and \autoref{prop:simplicial.object.Quillen.adjunction}.
\end{proof}

\begin{dfn}
\ \noprelistbreak
\begin{itemize}
\item Let $A$ be an object in $\mathcal{M}$ and let $B_{\bullet}$ be a simplicial object in $\mathcal{M}$. The \strong{right hom-complex} $\sHom[\mathcal{M}]{A}{B}$ is the simplicial set defined by the following formula:
\[
\sHom[\mathcal{M}]{A}{B}_n = \Hom[\mathcal{M}]{A}{B_n}
\]

\item Let $A^{\bullet}$ be a cosimplicial object in $\mathcal{M}$ and let $B$ be an object in $\mathcal{M}$. The \strong{left hom-complex} $\sHom[\mathcal{M}]{A}{B}$ is the simplicial set defined by the following formula:
\[
\sHom[\mathcal{M}]{A}{B}_n = \Hom[\mathcal{M}]{A^n}{B}
\]

\item Let $A^{\bullet}$ be a cosimplicial object in $\mathcal{M}$ and let $B_{\bullet}$ be a simplicial object in $\mathcal{M}$. The \strong{total hom-complex} $\sHom[\mathcal{M}]{A}{B}$ is the simplicial set defined by the following formula:
\[
\sHom[\mathcal{M}]{A}{B}_n = \Hom[\mathcal{M}]{A^n}{B_n}
\]
\end{itemize}
\end{dfn}

\begin{lem}
\label{lem:one-sided.homotopy.function.complex.homotopical.properties}
\ \noprelistbreak
\begin{itemize}
\item If $B_{\bullet}$ is a simplicial resolution in $\mathcal{M}$, then the right hom-complex functor $\sHom[\mathcal{M}]{\blank}{B} : \op{\mathcal{M}} \to \cat{\SSet}$ preserves weak equivalences between cofibrant objects.

\item If $A^{\bullet}$ is a cosimplicial resolution in $\mathcal{M}$, then the left hom-complex functor $\sHom[\mathcal{M}]{A}{\blank} : \mathcal{M} \to \cat{\SSet}$ preserves weak equivalences between fibrant objects.
\end{itemize}
\end{lem}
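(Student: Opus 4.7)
The plan is to invoke Ken Brown's lemma, for which it suffices to show that the right hom-complex functor sends trivial cofibrations between cofibrant objects to weak equivalences of simplicial sets. I will establish the stronger statement that it sends such morphisms to trivial Kan fibrations. Fix a trivial cofibration $f : A' \to A$ between cofibrant objects in $\mathcal{M}$, and consider testing the right lifting property of the induced map $\sHom[\mathcal{M}]{A}{B} \to \sHom[\mathcal{M}]{A'}{B}$ against the boundary inclusion $\partial \Delta^n \embedinto \Delta^n$ for each $n \geq 0$.

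By the Yoneda lemma, an $n$-simplex of $\sHom[\mathcal{M}]{A}{B}$ is just a morphism $A \to B_n$ in $\mathcal{M}$, and because $\Hom[\mathcal{M}]{A}{\blank}$ preserves limits, a map $\partial \Delta^n \to \sHom[\mathcal{M}]{A}{B}$ is the same data as a morphism $A \to M_n B$, where $M_n B$ denotes the $n$-th matching object of $B_{\bullet}$. Consequently, the simplicial lifting problem above translates into the problem of filling the square in $\mathcal{M}$ whose vertical arrows are $f$ on the left and the matching map $B_n \to M_n B$ on the right. Because $B_{\bullet}$ is Reedy-fibrant, the matching map is a fibration in $\mathcal{M}$; because $f$ is a trivial cofibration, the lifting axiom furnishes the required filler. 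Hence $\sHom[\mathcal{M}]{A}{B} \to \sHom[\mathcal{M}]{A'}{B}$ is a trivial Kan fibration, and Ken Brown's lemma completes the first half.

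The dual statement is proved by the symmetric argument, now using the latching objects $L_n A$ of the cosimplicial resolution $A^{\bullet}$. Ken Brown's lemma reduces to the case of a trivial fibration $B \to B'$ between fibrant objects; the right lifting property of $\sHom[\mathcal{M}]{A}{B} \to \sHom[\mathcal{M}]{A}{B'}$ against $\partial \Delta^n \embedinto \Delta^n$ translates, via the fact that $\Hom[\mathcal{M}]{\blank}{B}$ sends colimits to limits, into the problem of lifting the latching map $L_n A \to A^n$---a cofibration by Reedy-cofibrancy of $A^{\bullet}$---against the trivial fibration $B \to B'$, which is solvable by the model category axioms. I do not anticipate any substantial obstacle; the only point requiring care is the identification of the matching and latching objects of the hom-complexes with $\Hom[\mathcal{M}]{A}{M_n B}$ and $\Hom[\mathcal{M}]{L_n A}{B}$ respectively, which is routine from the representability of the hom-functors. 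Notably, the weak constancy of the resolution plays no role in this lemma---only the Reedy-(co)fibrancy is used.
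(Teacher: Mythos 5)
Your proof is correct, and it is essentially the argument one finds in the references the paper cites (Hovey, Prop.~5.4.4; Hirschhorn, Cor.~16.5.5): translate the lifting problem against $\partial\Delta^n \embedinto \Delta^n$ into a lifting problem in $\mathcal{M}$ against the matching (resp.~latching) map via the adjunction/representability identities, invoke Reedy-(co)fibrancy to get the trivial Kan fibration, and finish with Ken Brown's lemma. Your closing observation — that weak constancy is not used here, only Reedy-(co)fibrancy — is accurate and a worthwhile point to note.
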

\begin{proof} \openproof
See Corollaries 6.3 and 6.4 in \citep{Dwyer-Kan:1980c}, Corollary 5.4.4 in \citep{Hovey:1999}, or Corollary 16.5.5 in \citep{Hirschhorn:2003}.
\end{proof}

\begin{cor}
\label{cor:two-sided.homotopy.function.complex.homotopical.properties}
The total hom-complex functor
\[
\sHom[\mathcal{M}]{\blank}{\blank} : \op{\parens{\mbfc_\mathrm{r} \mathcal{M}}} \times \mbfs_\mathrm{r} \mathcal{M} \to \cat{\SSet}
\]
preserves weak equivalences.
\end{cor}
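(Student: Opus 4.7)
The plan is to factor the comparison map into two, one varying $A$ and one varying $B$, and to view each separately as the diagonal of a bisimplicial set to which the Bousfield--Kan machinery applies. Concretely, given weak equivalences $A \to A'$ in $\mbfc_\mathrm{r} \mathcal{M}$ and $B \to B'$ in $\mbfs_\mathrm{r} \mathcal{M}$, I would factor
\[
\sHom[\mathcal{M}]{A'}{B} \longrightarrow \sHom[\mathcal{M}]{A}{B} \longrightarrow \sHom[\mathcal{M}]{A}{B'}
\]
and prove each arrow is a weak homotopy equivalence of simplicial sets.

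For the second arrow, I would consider the bisimplicial set $X_{p,q} = \Hom[\mathcal{M}]{A^p}{B_q}$, so that $\sHom[\mathcal{M}]{A}{B}$ is precisely its diagonal. For each fixed $q$, the induced map $X_{\bullet,q} \to Y_{\bullet,q}$ (with $Y_{p,q} = \Hom[\mathcal{M}]{A^p}{B'_q}$) is, in the variable $p$, the map of left hom-complexes $\sHom[\mathcal{M}]{A}{B_q} \to \sHom[\mathcal{M}]{A}{B'_q}$. Since $B$ and $B'$ are Reedy fibrant, both $B_q$ and $B'_q$ are fibrant in $\mathcal{M}$, and since $B \to B'$ is a Reedy weak equivalence, $B_q \to B'_q$ is a weak equivalence. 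Hence \autoref{lem:one-sided.homotopy.function.complex.homotopical.properties} (second bullet) applies to give a weak equivalence of simplicial sets for each $q$. Then \autoref{lem:Bousfield-Kan.homotopical.properties} together with \autoref{lem:diagonal.is.hocolim} promotes this levelwise weak equivalence to a weak equivalence of diagonals, which is exactly what is required.

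For the first arrow, I would dualise: fix $B$ and consider the bisimplicial set in the $p$-variable for each fixed $p$. Now the slice $\Hom[\mathcal{M}]{A'^p}{B_{\bullet}} \to \Hom[\mathcal{M}]{A^p}{B_{\bullet}}$ is the map of right hom-complexes $\sHom[\mathcal{M}]{A'^p}{B} \to \sHom[\mathcal{M}]{A^p}{B}$. Both $A^p$ and $A'^p$ are cofibrant in $\mathcal{M}$ (as $A$ and $A'$ are Reedy cofibrant) and $A^p \to A'^p$ is a weak equivalence (as $A \to A'$ is a Reedy weak equivalence), so \autoref{lem:one-sided.homotopy.function.complex.homotopical.properties} (first bullet) yields a levelwise weak equivalence; applying \autoref{lem:Bousfield-Kan.homotopical.properties} and \autoref{lem:diagonal.is.hocolim} again finishes the job.

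There is no real obstacle — the whole argument is the standard ``fix one variable, apply the one-sided result, then diagonalise'' trick. The only point that deserves care is checking that $\mbfs_\mathrm{r}\mathcal{M}$ being Reedy fibrant really does entail levelwise fibrancy of $B_q$ (and dually for $A^p$), since this is what feeds the hypotheses of \autoref{lem:one-sided.homotopy.function.complex.homotopical.properties}; everything else is routine invocation of the Bousfield--Kan comparison.
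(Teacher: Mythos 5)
Your argument is correct and is essentially the paper's own proof, just spelled out: the paper's one-sentence proof cites exactly the same three lemmas (Bousfield--Kan invariance, diagonal $=$ homotopy colimit, and the one-sided hom-complex lemma) together with the observation that Reedy fibrancy and cofibrancy imply levelwise fibrancy and cofibrancy. Your factor-through-$\sHom[\mathcal{M}]{A}{B}$-and-diagonalise argument is the natural way to assemble those ingredients and matches what the paper intends.
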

\begin{proof}
Since Reedy-fibrant simplicial objects (\resp Reedy-cofibrant cosimplicial objects) are degreewise fibrant (\resp cofibrant), the claim is a consequence of lemmas~\ref{lem:Bousfield-Kan.homotopical.properties}, \ref{lem:diagonal.is.hocolim}, and~\ref{lem:one-sided.homotopy.function.complex.homotopical.properties}.
\end{proof}

\begin{thm}
\ \noprelistbreak
\begin{enumerate}[(i)]
\item The right hom-complex functor $\sHom[\mathcal{M}]{\blank}{\blank} : \op{\mathcal{M}} \times \mbfs_\mathrm{w} \mathcal{M} \to \cat{\SSet}$ is a right-deformable functor.

\item The left hom-complex functor $\sHom[\mathcal{M}]{\blank}{\blank} : \op{\parens{\mbfc_\mathrm{w} \mathcal{M}}} \times \mathcal{M} \to \cat{\SSet}$ is a right-deformable functor. 

\item The total hom-complex functor $\sHom[\mathcal{M}]{\blank}{\blank} : \op{\parens{\mbfc_\mathrm{w} \mathcal{M}}} \times \mbfs_\mathrm{w} \mathcal{M} \to \cat{\SSet}$ is a right-deformable functor. 
\end{enumerate}
In particular, each of the above-mentioned functors has a total right derived functor.
\end{thm}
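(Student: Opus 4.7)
My plan is to establish each case by exhibiting a right deformation retract of the source on which the functor preserves weak equivalences; the existence of a total right derived functor then follows from the DHKS theorem. The candidate retracts are $\op{\parens{\mathcal{M}_c}} \times \mbfs_\mathrm{r} \mathcal{M}$ for (i), $\op{\parens{\mbfc_\mathrm{r} \mathcal{M}}} \times \mathcal{M}_f$ for (ii), and $\op{\parens{\mbfc_\mathrm{r} \mathcal{M}}} \times \mbfs_\mathrm{r} \mathcal{M}$ for (iii), where $\mathcal{M}_c$ (\resp $\mathcal{M}_f$) is the full subcategory of cofibrant (\resp fibrant) objects in $\mathcal{M}$. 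The deformations will be assembled from the functorial cofibrant and fibrant replacements in $\mathcal{M}$ together with the functorial Reedy cofibrant and Reedy fibrant replacements in $\mbfc \mathcal{M}$ and $\mbfs \mathcal{M}$.

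The first step is to verify that functorial Reedy fibrant replacement $R$ sends $\mbfs_\mathrm{w} \mathcal{M}$ into $\mbfs_\mathrm{r} \mathcal{M}$. Given weakly constant $B_{\bullet}$ and the natural Reedy weak equivalence $\iota : B_{\bullet} \to R B_{\bullet}$, I would chase the naturality square built from $\iota$, the counits $\skel[0] B_0 \to B_{\bullet}$ and $\skel[0] \parens{R B}_0 \to R B_{\bullet}$, and the vertical morphism $\skel[0] \iota_0$. Three of the four sides are Reedy weak equivalences (the top by weak constancy of $B_{\bullet}$, the right by construction of $R$, and the left because $\skel[0]$ sends weak equivalences of $\mathcal{M}$ to Reedy weak equivalences), so 2-out-of-3 forces the remaining counit to be one as well, showing $R B_{\bullet}$ is weakly constant. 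The dual argument handles Reedy cofibrant replacement on $\mbfc_\mathrm{w} \mathcal{M}$.

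The second step is to verify preservation of weak equivalences on each retract. Case (iii) is immediate from \autoref{cor:two-sided.homotopy.function.complex.homotopical.properties}. For case (i), I would factor a morphism $\parens{A, B_{\bullet}} \to \parens{A', B'_{\bullet}}$ in the retract through $\parens{A, B'_{\bullet}}$: the factor varying only the cofibrant object is covered directly by \autoref{lem:one-sided.homotopy.function.complex.homotopical.properties}, while the factor varying only the simplicial resolution is reduced to \autoref{cor:two-sided.homotopy.function.complex.homotopical.properties} by identifying the right hom-complex $\sHom[\mathcal{M}]{A}{B_{\bullet}}$ with the total hom-complex $\sHom[\mathcal{M}]{\cskel[0] A}{B_{\bullet}}$ on the constant cosimplicial object $\cskel[0] A$. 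Case (ii) is formally dual, using that $\skel[0] B \in \mbfs_\mathrm{r} \mathcal{M}$ whenever $B$ is fibrant.

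The main obstacle I foresee is confirming that constant (co)simplicial objects on (co)fibrant objects really belong to $\mbfc_\mathrm{r} \mathcal{M}$ and $\mbfs_\mathrm{r} \mathcal{M}$ respectively. I plan to read off the Reedy structure directly: for $\cskel[0] A$ with $A$ cofibrant, the latching object at degree $0$ is $\emptyset$ with latching map $\emptyset \to A$, a cofibration by cofibrancy of $A$, while for $n \geq 1$ the latching object is $A$ with latching map the identity; weak constancy is tautological. With this in hand the remaining ingredients — assembling the product deformations from the two variables and invoking the DHKS existence theorem — are routine.
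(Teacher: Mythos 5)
The overall architecture — pass to the retract $\op{\parens{\mathcal{M}_\mathrm{c}}} \times \mbfs_\mathrm{r} \mathcal{M}$ (and duals), factor a weak equivalence of pairs into two one-variable pieces, and invoke \autoref{lem:one-sided.homotopy.function.complex.homotopical.properties} and \autoref{cor:two-sided.homotopy.function.complex.homotopical.properties} — is exactly the paper's approach, and your step showing that Reedy (co)fibrant replacement preserves weak constancy (via 2-out-of-3) is correct. But the argument breaks at the point you yourself flag as the ``main obstacle.'' The constant cosimplicial object on a cofibrant object $A$ is \emph{not} Reedy-cofibrant, and your Reedy computation is wrong: the latching category $\partial\parens{\commacat{\Simplex_+}{[1]}}$ consists of the two coface maps $d^0, d^1 : [0] \to [1]$ with no morphisms between them, so the first latching object of $\ccoskel[0]{A}$ is $A \sqcup A$, not $A$, and the latching map is the codiagonal $\nabla : A \sqcup A \to A$. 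This is essentially never a cofibration (already in $\cat{\SSet}$ with $A = \Delta^0$ it is not a monomorphism; in topological spaces it fails for any non-initial $A$). Your subsequent computations for $n \geq 2$ happen to give $A$ with identity latching map, because $\partial\Delta^n$ is connected for $n \geq 2$; the failure is concentrated at degree $1$, which you skipped over. Dually, $\skel[0]{B}$ with $B$ fibrant is not Reedy-fibrant: the first matching object is $B \times B$ and the matching map is the diagonal, not a fibration in general. Consequently, the identification $\sHom[\mathcal{M}]{A}{B_\bullet} = \sHom[\mathcal{M}]{\ccoskel[0]{A}}{B_\bullet}$ does not let you apply \autoref{cor:two-sided.homotopy.function.complex.homotopical.properties}, since the latter requires the first argument to lie in $\mbfc_\mathrm{r} \mathcal{M}$, and this leaves the ``vary only the simplicial resolution'' factor of (i) (and its dual in (ii)) unproven.

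The repair is to compare against a \emph{genuine} cosimplicial resolution rather than the constant one. Choose a functorial cosimplicial resolution $\tilde{A}^{\bullet}$ with a degreewise trivial fibration $\tilde{A}^{\bullet} \to \ccoskel[0]{A}$, and consider the induced map of bisimplicial sets from the constant-in-$m$ bisimplicial set $\Hom[\mathcal{M}]{A}{\hat{B}_n}$ to $\Hom[\mathcal{M}]{\tilde{A}^m}{\hat{B}_n}$. For each fixed $m$ this is the map $\sHom[\mathcal{M}]{A}{\hat{B}} \to \sHom[\mathcal{M}]{\tilde{A}^m}{\hat{B}}$ of right hom-complexes induced by the weak equivalence $\tilde{A}^m \to A$ between cofibrant objects, which is a weak homotopy equivalence by \autoref{lem:one-sided.homotopy.function.complex.homotopical.properties}; hence by \autoref{lem:Bousfield-Kan.homotopical.properties} and \autoref{lem:diagonal.is.hocolim} the induced map on diagonals $\sHom[\mathcal{M}]{A}{\hat{B}_\bullet} \to \sHom[\mathcal{M}]{\tilde{A}^{\bullet}}{\hat{B}_\bullet}$ is a weak equivalence. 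With that comparison in hand the ``vary $B_\bullet$'' factor follows from \autoref{cor:two-sided.homotopy.function.complex.homotopical.properties} by 2-out-of-3. Once this substitute is made, the rest of your proof goes through.
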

\begin{proof}
The Reedy-fibrant replacement functor for $\mbfs \mathcal{M}$ (\resp Reedy-cofibrant replacement functor for $\mbfc \mathcal{M}$) restricts to a right (\resp left) deformation retract for $\mbfs_\mathrm{w} \mathcal{M}$ (\resp $\mbfc_\mathrm{w} \mathcal{M}$). The right-deformability of the functors in question then follows by \autoref{lem:one-sided.homotopy.function.complex.homotopical.properties} and \autoref{cor:two-sided.homotopy.function.complex.homotopical.properties}, and the existence of total right derived functors is an application of paragraph 41.5 in \citep{DHKS}.
\end{proof}

Recall that the \strong{totalisation} of a cosimplicial simplicial set $X^{\bullet}$ is the simplicial set $\Tot X^{\bullet}$ defined by the following end formula,
\[
\Tot X^{\bullet} = \int_{\bracket{m} : \cat{\Simplex}} \xHom{\Delta^m}{X^m}
\]
where $\Delta^m$ is the standard $m$-simplex and $\xHom{\blank}{\blank}$ denotes the internal hom of $\cat{\SSet}$.

\begin{prop}
\ \noprelistbreak
\begin{itemize}
\item The category $\mbfs \mathcal{M}$ admits a simplicial enrichment with hom-spaces defined by the following formula,
\[
\ulHom[\mbfs \mathcal{M}]{A}{B} = \Tot \sHom[\mathcal{M}]{A_{\bullet}}{B}
\]
where $A_{\bullet}$ and $B_{\bullet}$ are simplicial objects in $\mathcal{M}$ and $\sHom[\mathcal{M}]$ in the RHS denotes the right hom-complex.

\item The category $\mbfc \mathcal{M}$ admits a simplicial enrichment with hom-spaces defined by the following formula,
\[
\ulHom[\mbfc \mathcal{M}]{A}{B} = \Tot \sHom[\mathcal{M}]{A}{B^{\bullet}}
\]
where $A^{\bullet}$ and $B^{\bullet}$ are cosimplicial objects in $\mathcal{M}$ and $\sHom[\mathcal{M}]$ in the RHS denotes the left hom-complex.
\end{itemize}
\end{prop}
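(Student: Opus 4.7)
The plan is to construct the simplicial enrichment on $\mbfs \mathcal{M}$ directly, by unpacking the end formula defining $\Tot$ and using categorical composition in $\mathcal{M}$; the cosimplicial case is formally dual.

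First, I unwind the formula. By the definition of $\Tot$,
$$\ulHom[\mbfs\mathcal{M}]{A}{B}_n \;=\; \int_{[m] \in \cat{\Simplex}} \Hom[\cat{\SSet}]{\Delta^n \times \Delta^m}{\sHom[\mathcal{M}]{A_m}{B_\bullet}},$$
so that, unpacking the definition of a simplicial map, an $n$-simplex corresponds to a dinatural family of morphisms $\phi_m(\alpha, \beta) : A_m \to B_k$ in $\mathcal{M}$, indexed by pairs of simplicial operators $\alpha : [k] \to [n]$ and $\beta : [k] \to [m]$, compatible with face and degeneracy operators in the $[k]$-variable, and satisfying the end condition $\phi_m(\alpha, \gamma \circ \beta') = \phi_{m'}(\alpha, \beta') \circ A_\gamma$ for every $\gamma : [m'] \to [m]$ in $\cat{\Simplex}$.

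Second, I define composition and identities from this presentation. For $n$-simplices $\phi \in \ulHom[\mbfs\mathcal{M}]{A}{B}_n$ and $\psi \in \ulHom[\mbfs\mathcal{M}]{B}{C}_n$, set
$$(\psi \cdot \phi)_m(\alpha, \beta) \;=\; \psi_k(\alpha, \id_{[k]}) \circ \phi_m(\alpha, \beta),$$
and declare the identity $\id_A \in \ulHom[\mbfs\mathcal{M}]{A}{A}_0$ to be the family $(\id_A)_m(\alpha, \beta) = A_\beta$. A short diagram chase using the simplicial and end conditions for $\psi$ shows that $\psi \cdot \phi$ is again a well-defined $n$-simplex; the unit axioms follow from the end condition for $\phi$ applied with $\gamma = \beta$ and $\beta' = \id_{[k]}$; and associativity is inherited directly from associativity of composition in $\mathcal{M}$. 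Specialising to $n = 0$ recovers $\ulHom[\mbfs\mathcal{M}]{A}{B}_0 \cong \Hom[\mbfs\mathcal{M}]{A}{B}$ together with the ordinary composition of morphisms of simplicial objects.

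The main obstacle is purely notational: the composition formula juggles several variances (simplicial in $\beta$, cosimplicial in the end variable $[m]$, and standard in the parametrising simplex $[n]$), and the index-bookkeeping must be done with some care. Once the dinatural family description above is in hand, the enrichment axioms all follow by direct computation from those of $\mathcal{M}$. The cosimplicial claim is obtained by dualising: one plays the same game with the dual end formula, interchanging the roles of simplicial and cosimplicial variables and of $\mathcal{M}$ and $\op{\mathcal{M}}$.
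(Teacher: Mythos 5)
The paper marks this proposition's proof as omitted, so there is no model argument to compare against; your job is to supply the routine verification, which you have done correctly. Your composition formula $(\psi\cdot\phi)_m(\alpha,\beta) = \psi_k(\alpha,\id_{[k]})\circ\phi_m(\alpha,\beta)$ is the right one, the well-definedness, unit and associativity checks go through exactly as you indicate, and specialising to $\mathcal{M}=\cat{\Set}$ does recover the usual internal hom of $\cat{\SSet}$, which is a useful sanity check.

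One remark on streamlining. The dinaturality bookkeeping you describe as the main obstacle can be sidestepped by collapsing the end in $[m]$ via the Yoneda lemma for ends before doing any verification. Since $\sHom[\mathcal{M}]{A_m}{B}_k = \Hom[\mathcal{M}]{A_m}{B_k}$ is covariant in $[m]$, one has
\[
\parens{\Tot\sHom[\mathcal{M}]{A_\bullet}{B}}_n
\;\cong\;
\int_{[k]} \Hom[\cat{\Set}]{\parens{\Delta^n}_k}{\Hom[\mathcal{M}]{A_k}{B_k}},
\]
so an $n$-simplex is simply a family of maps $\phi_k : \parens{\Delta^n}_k \to \Hom[\mathcal{M}]{A_k}{B_k}$ natural in $[k]$, with no dinaturality constraint remaining. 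In that presentation composition is levelwise composition in $\mathcal{M}$, the identity is the family of identities, and all the enriched-category axioms follow pointwise from those of $\mathcal{M}$ with essentially no index-chasing. Your proof is correct as written; this reformulation just turns the ``main obstacle'' you flag into a non-issue.
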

\begin{proof} \openproof
Omitted.
\end{proof}

Though this simplicial enrichment of $\mbfs \mathcal{M}$ (\resp $\mbfc \mathcal{M}$) usually fails to make it a simplicial model category, it has just enough good properties to ensure that the hom-space functor of $\mbfs_\mathrm{w} \mathcal{M}$ (\resp $\mbfc_\mathrm{w} \mathcal{M}$) admits a total right derived functor. Indeed:

\begin{thm}
\ \noprelistbreak
\begin{itemize}
\item The functor $\ulHom[\mbfs_\mathrm{w} \mathcal{M}]{\blank}{\blank} : \op{\parens{\mbfs_\mathrm{w} \mathcal{M}}} \times \mbfs_\mathrm{w} \mathcal{M} \to \cat{\SSet}$ is a right-deformable functor and has a total right derived functor.

\item The functor $\ulHom[\mbfc_\mathrm{w} \mathcal{M}]{\blank}{\blank} : \op{\parens{\mbfc_\mathrm{w} \mathcal{M}}} \times \mbfc_\mathrm{w} \mathcal{M} \to \cat{\SSet}$ is a right-deformable functor and has a total right derived functor.
\end{itemize}
\end{thm}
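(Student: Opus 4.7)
By duality it suffices to treat the case of $\mbfs_\mathrm{w}\mathcal{M}$. The plan is to take a functorial Reedy cofibrant replacement $Q$ in the contravariant variable and a functorial Reedy fibrant replacement $R$ in the covariant variable as the right deformation on $\op{\parens{\mbfs_\mathrm{w}\mathcal{M}}} \times \mbfs_\mathrm{w}\mathcal{M}$. Both are natural Reedy weak equivalences, so by 2-out-of-3 they preserve weak constancy; moreover $QA$ is Reedy cofibrant and $RB$ is a simplicial resolution, so the replacements do land in a subcategory where we have a chance of controlling $\ulHom$ homotopically.

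The heart of the argument is to show that
\[
\ulHom[\mbfs\mathcal{M}]{A}{B} = \Tot \sHom[\mathcal{M}]{A_\bullet}{B_\bullet}
\]
preserves Reedy weak equivalences when $A_\bullet$ is Reedy cofibrant and weakly constant and $B_\bullet$ is a simplicial resolution. Consider the cosimplicial simplicial set $X^\bullet$ with $X^m = \sHom[\mathcal{M}]{A_m}{B_\bullet}$. Since Reedy cofibrant simplicial objects are degreewise cofibrant and simplicial resolutions are degreewise fibrant, \autoref{lem:one-sided.homotopy.function.complex.homotopical.properties} shows that a Reedy weak equivalence in either variable induces a levelwise weak equivalence of such cosimplicial simplicial sets.

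The main technical step is to verify that $X^\bullet$ is Reedy fibrant in $\mbfc\cat{\SSet}$. Using the contravariance of $\sHom[\mathcal{M}]{\blank}{B_\bullet}$ one identifies the $[m]$-matching object as $M^m X \cong \sHom[\mathcal{M}]{L_m A}{B_\bullet}$, so the matching map is the restriction along the latching inclusion $L_m A \hookrightarrow A_m$, which is a cofibration in $\mathcal{M}$. What is needed is therefore the standard characterisation of simplicial resolutions as those $B_\bullet$ for which $\sHom[\mathcal{M}]{\blank}{B_\bullet}$ converts cofibrations in $\mathcal{M}$ into Kan fibrations of simplicial sets. This is the step I expect to demand the most care, because $\mathcal{M}$ need not be simplicially enriched, so the horn-lifting property cannot be obtained by any adjunction with a simplicial tensor; instead one argues directly from the Reedy matching condition on $B_\bullet$ by a degree-by-degree lifting argument against $L_m A \hookrightarrow A_m$ inside $\mathcal{M}$.

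Granted the Reedy fibrancy of $X^\bullet$, the functor $\Tot$ — right Quillen from Reedy $\mbfc\cat{\SSet}$ to $\cat{\SSet}$, since $\Delta$ is a Reedy cofibrant cosimplicial simplicial set — preserves weak equivalences between such objects, and combined with the levelwise preservation from the previous paragraph this yields the required homotopical behaviour of $\ulHom[\mbfs\mathcal{M}]{\blank}{\blank}$. Hence $(Q, R)$ is a right deformation of the hom-space bifunctor, and existence of the total right derived functor follows from \citep[\Sect 41.5]{DHKS}.
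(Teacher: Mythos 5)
Your proof is correct in outline but takes a genuinely different route from the paper's. The paper does \emph{not} take a Reedy cofibrant replacement in the contravariant slot: it deforms $A_\bullet$ to the \emph{constant} simplicial object $\skel[0]{\tilde{A}}$ at a cofibrant replacement $\tilde{A}$ of $A_0$, observing that the map $\skel[0]{\tilde{A}} \to A_\bullet$ is a Reedy weak equivalence precisely because $A_\bullet$ is weakly constant. The payoff is that the cosimplicial simplicial set $m \mapsto \sHom[\mathcal{M}]{\tilde{A}}{B_\bullet}$ is constant, so $\Tot$ collapses to its value and
\[
\ulHom[\mbfs_\mathrm{w}\mathcal{M}]{\skel[0]{\tilde{A}}}{B} \cong \sHom[\mathcal{M}]{\tilde{A}}{B};
\]
the homotopical property then follows directly from \autoref{cor:two-sided.homotopy.function.complex.homotopical.properties}, and neither the Reedy fibrancy of $X^\bullet$ nor the right-Quillen-ness of $\Tot$ ever enters the argument. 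Your more generic route---Reedy cofibrant replacement, the matching/latching identification $M^m X \cong \sHom[\mathcal{M}]{L_m A}{B_\bullet}$, the fact that $\sHom[\mathcal{M}]{\blank}{B_\bullet}$ sends cofibrations to Kan fibrations when $B_\bullet$ is a simplicial resolution, and right-Quillen-ness of $\Tot$---should also work, and you correctly flag the one step that requires care; it amounts to knowing $B^{\Delta^n} \to B^{\Lambda^n_k}$ is a trivial fibration in $\mathcal{M}$, so that the cofibration $L_m A \to A_m$ lifts against it. Two small corrections: that lifting property uses weak constancy as well as Reedy fibrancy of $B_\bullet$, so it is not obtainable from the Reedy matching condition alone; and the levelwise preservation of weak equivalences in the covariant variable (fixed cofibrant $A_m$, Reedy weak equivalence $B_\bullet \to B'_\bullet$ of simplicial resolutions) is not literally an instance of \autoref{lem:one-sided.homotopy.function.complex.homotopical.properties} but rather of the complementary standard statement from the framing literature, so you should cite that instead.
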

\begin{proof}
The two claims are formally dual; we will prove the first version.

Let $\mathcal{M}_\mathrm{c}$ be the full subcategory of $\mathcal{M}$ spanned by the cofibrant objects. Observe that for any object $A$ in $\mathcal{M}$ and any simplicial object $B_{\bullet}$ in $\mathcal{M}$, there is a natural isomorphism
\[
\ulHom[\mbfs_\mathrm{w} \mathcal{M}]{\skel[0]{A}}{B} \cong \sHom[\mathcal{M}]{A}{B}
\]
and so, by \autoref{cor:two-sided.homotopy.function.complex.homotopical.properties}, the functor $\ulHom[\mbfs_\mathrm{w} \mathcal{M}]{\skel[0]{\blank}}{\blank} : \op{\parens{\mathcal{M}_\mathrm{c}}} \times \mbfs_\mathrm{r} \mathcal{M} \to \cat{\SSet}$ preserves weak equivalences. But for every weakly constant simplicial object $A_{\bullet}$ in $\mathcal{M}$, there is a functorial choice of a cofibrant object $\tilde{A}$ in $\mathcal{M}$ and a Reedy weak equivalence $\skel[0]{\smash{\tilde{A}}} \to A_{\bullet}$, and for every weakly constant simplicial object $B_{\bullet}$, there is a functorial choice of a simplicial resolution $\hat{B}_{\bullet}$ and a Reedy weak equivalence $B_{\bullet} \to \hat{B}_{\bullet}$, so $\ulHom[\mbfs_\mathrm{w} \mathcal{M}]{\blank}{\blank} : \op{\parens{\mbfs_\mathrm{w} \mathcal{M}}} \times \mbfs_\mathrm{w} \mathcal{M} \to \cat{\SSet}$ is indeed right-deformable.
\end{proof}

In view of the results of this section, it seems reasonable to make the following definition:

\begin{dfn}
A \strong{derived hom-space functor} for $\mathcal{M}$ is a functor
\[
\RHom[\mathcal{M}] : \op{\Ho \mathcal{M}} \times \Ho \mathcal{M} \to \Ho \cat{\SSet}
\]
equipped with an isomorphism
\[
\RHom[\mathcal{M}]{\pblank^0}{\pblank_0} \cong \sHom[\mathcal{M}]{\blank}{\blank}
\]
of functors $\op{\parens{\Ho \mbfc_\mathrm{r} \mathcal{M}}} \times \Ho \mbfs_\mathrm{r} \mathcal{M} \to \Ho \cat{\SSet}$.
\end{dfn}

\section{Comparison with the hammock localisation}

Let $\mathcal{M}$ be a small model category and let $\mathcal{W}$ be the subcategory of weak equivalences. Recall the following definitions from \citep{Dwyer-Kan:1980b}:

\begin{dfn}
\ \noprelistbreak
\begin{itemize}
\item A \strong{hammock} in $\mathcal{M}$ from $A$ to $B$ of width $k$ and length $n$ is a commutative diagram in $\mathcal{M}$ of the form below,
\[
\hspace{-0.5in}
\begin{tikzcd}[column sep=4.5ex, row sep=4.5ex]
A \dar[equals] \rar[-] &
C_{0, 1} \dar \rar[-] &
C_{0, 2} \dar \rar[-] &
\cdots \rar[-] &
C_{0, n-2} \dar \rar[-] &
C_{0, n-1} \dar \rar[-] &
B \dar[equals] \\
A \dar[equals] \rar[-] &
C_{1, 1} \dar \rar[-] &
C_{1, 2} \dar \rar[-] &
\cdots \rar[-] &
C_{1, n-2} \dar \rar[-] &
C_{1, n-1} \dar \rar[-] &
B \dar[equals] \\
\vdots \dar[equals] &
\vdots \dar &
\vdots \dar &
\ddots &
\vdots \dar &
\vdots \dar &
\vdots \dar[equals] \\
A \dar[equals] \rar[-] &
C_{k-1, 1} \dar \rar[-] &
C_{k-1, 2} \dar \rar[-] &
\cdots \rar[-] &
C_{k-1, n-2} \dar \rar[-] &
C_{k-1, n-1} \dar \rar[-] &
B \dar[equals] \\
A \rar[-] &
C_{k, 1} \rar[-] &
C_{k, 2} \rar[-] &
\cdots \rar[-] &
C_{k, n-2} \rar[-] &
C_{k, n-1} \rar[-] &
B
\end{tikzcd}
\hspace{-0.5in}
\]
such that the following conditions are satisfied:
\begin{itemize}
\item In each column, all horizontal arrows point in the same direction.

\item All leftward-pointing arrows are weak equivalences.

\item All vertical arrows are weak equivalences.
\end{itemize}
We allow both $k$ and $n$ to be zero; if $n = 0$ then we must have $A = B$. 

\needspace{3\baselineskip}
\item A \strong{reduced hammock} in $\mathcal{M}$ is a hammock with these additional properties:
\begin{itemize}
\item In each column, not every horizontal arrow is an identity morphism.

\item Horizontal arrows in adjacent columns point in opposite directions.
\end{itemize}

\item The \strong{hammock localisation} of $\mathcal{M}$ is the following simplicially enriched category $\ul{\LH \mathcal{M}}$:
\begin{itemize}
\item The objects in $\LH \mathcal{M}$ are the objects in $\mathcal{M}$.

\item The hom-space $\ulHom[\LH \mathcal{M}]{A}{B}$ is the evident simplicial set whose $k$-simplices are the \emph{reduced} hammocks from $A$ to $B$ of width $k$ and \emph{any} length.

\item Composition is (horizontal) concatenation and identities are the hammocks of length $0$.
\end{itemize}
\end{itemize}
\end{dfn}

\needspace{3\baselineskip}
We are especially interested in the following:

\begin{dfn}
A \strong{special hammock} in $\mathcal{M}$ from $A$ to $B$ is a hammock of the form below,
\[
\begin{tikzcd}
A \dar[equals] \rar[leftarrow] &
\bullet \dar \rar &
\bullet \dar \rar[leftarrow] &
B \dar[equals] \\
A \dar[equals] \rar[leftarrow] &
\bullet \dar \rar &
\bullet \dar \rar[leftarrow] &
B \dar[equals] \\
A \dar[equals] &
\vdots \dar &
\vdots \dar &
B \dar[equals] \\
A \dar[equals] \rar[leftarrow] &
\bullet \dar \rar &
\bullet \dar \rar[leftarrow] &
B \dar[equals] \\
A \rar[leftarrow] &
\bullet \rar &
\bullet \rar[leftarrow] &
B
\end{tikzcd}
\]
where the horizontal arrows in the leftmost column are trivial fibrations and the horizontal arrows in the rightmost column are trivial cofibrations.

We write $\mathcal{T} \argp{A, B}$ for the following category:
\begin{itemize}
\item The objects are special hammocks in $\mathcal{M}$ from $A$ to $B$ of width $0$.

\item The morphisms are special hammocks in $\mathcal{M}$ from $A$ to $B$ of width $1$, with the top row as the domain and the bottom row as the codomain.

\item Composition and identities are inherited from $\mathcal{M}$.
\end{itemize}
\end{dfn}

\begin{remark}
Recalling that the class of trivial fibrations (\resp trivial cofibrations) in $\mathcal{M}$ is closed under pullback (\resp pushout), there is an evident pseudofunctor $\op{\mathcal{M}} \times \mathcal{M} \to \bicat{\Cat}$ whose value at $\tuple{A, B}$ is the category $\mathcal{T} \argp{A, B}$.
\end{remark}

\begin{lem}
\label{lem:natural.morphism.of.special.hammocks}
The obvious morphism $\nv{\mathcal{T} \argp{A, B}} \to \ulHom[\LH \mathcal{M}]{A}{B}$ is natural in the following sense: given morphisms $A' \to A$ and $B \to B'$ in $\mathcal{M}$, the following diagram commutes in $\Ho \cat{\SSet}$,
\[
\begin{tikzcd}
\nv{\mathcal{T} \argp{A, B}} \dar \rar &
\ulHom[\LH \mathcal{M}]{A}{B} \dar \\
\nv{\mathcal{T} \argp{A', B'}} \rar &
\ulHom[\LH \mathcal{M}]{A'}{B'}
\end{tikzcd}
\]
where the vertical arrows are the evident induced morphisms.
\end{lem}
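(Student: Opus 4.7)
The two compositions $\nv{\mathcal{T}(A, B)} \to \ulHom[\LH \mathcal{M}]{A'}{B'}$ in question are the upper route $H \mapsto f \cdot H \cdot g$, which concatenates $f$ on the left and $g$ on the right in $\LH \mathcal{M}$ to produce a length-$5$ reduced hammock, and the lower route $H \mapsto g_{\ast} f^{\ast} H$, which pulls back the leftmost trivial fibrations of $H$ along $f$ and pushes out the rightmost trivial cofibrations along $g$ to produce another length-$3$ special hammock. Both assignments are honest simplicial maps, so the square commutes in $\Ho \cat{\SSet}$ precisely when these two maps are simplicially homotopic. The plan is to construct an explicit simplicial homotopy $\nv{\mathcal{T}(A, B)} \times \Delta^1 \to \ulHom[\LH \mathcal{M}]{A'}{B'}$ between them.

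For a width-$k$ special hammock $H$, the raw material for the interpolation is the family of pullback squares of $f$ with the $k+1$ leftmost trivial fibrations of $H$, together with the family of pushout squares of $g$ with the $k+1$ rightmost trivial cofibrations. These squares, patched together with the interior arrows of $H$, assemble into a width-$(k+1)$ hammock from $A'$ to $B'$ whose top row is a padded version of $g_{\ast} f^{\ast} H$, whose bottom row is $f \cdot H \cdot g$, and whose vertical arrows are all identities, trivial fibrations (from the pullbacks), or trivial cofibrations (from the pushouts)\,---\,never the maps $f$ or $g$ themselves. Reducing this hammock yields a $(k+1)$-simplex of $\ulHom[\LH \mathcal{M}]{A'}{B'}$, and functoriality of pullback and pushout along morphisms of special hammocks makes the assignment natural in $H$, giving a well-defined simplicial map as required.

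The main technical obstacle is combinatorial: the rows $f \cdot H \cdot g$ (length $5$) and $g_{\ast} f^{\ast} H$ (length $3$) have different lengths and different column-direction patterns, and since $f$ and $g$ are not weak equivalences in general they cannot appear as vertical arrows of the interpolating hammock. One must therefore insert auxiliary columns (so that the pullback and pushout objects also occur in the top row, not just the bottom) and identity columns at the extremities, so that the two rows share a common column pattern and every vertical arrow is either an identity or one of the designated trivial (co)fibrations. Verifying that the resulting layout actually constitutes a valid width-$(k+1)$ hammock\,---\,\ie every column has a single consistent direction and every vertical arrow is a weak equivalence\,---\,and that face and degeneracy maps are respected is the bulk of the work, but requires no new ideas beyond careful bookkeeping with the pullback and pushout squares.
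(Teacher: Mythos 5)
Your plan to realise the square by a single simplicial homotopy $\nv{\mathcal{T} \argp{A, B}} \times \Delta^1 \to \ulHom[\LH \mathcal{M}]{A'}{B'}$, sending each width-$k$ special hammock $H$ to a width-$(k+1)$ hammock whose top row reduces to $g_{\ast} f^{\ast} H$ and whose bottom row is $f \cdot H \cdot g$, runs into an orientation obstruction that no amount of padding by auxiliary or identity columns can resolve. All vertical arrows of a hammock point the same way (from row $j$ to row $j+1$) and must be weak equivalences. Write the left column of a width-$0$ special hammock $H$ as a trivial fibration $p : C_1 \to A$ and the right column as a trivial cofibration $i : B \to C_2$; the pullback and pushout supply the trivial fibration $p' : C_1' \to A'$ and the trivial cofibration $i' : B' \to C_2'$. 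Using $p'$ as a vertical arrow forces $C_1'$ to lie \emph{above} $A'$, whereas using $i'$ forces $B'$ to lie \emph{above} $C_2'$. Hence if the top row contains $C_1'$, as it must in order to reduce to $g_{\ast} f^{\ast} H$, it cannot also contain $C_2'$: the only maps into $C_2'$ that the construction provides are $i' : B' \to C_2'$ and $g'' : C_2 \to C_2'$, and the first points the wrong way to be a vertical arrow \emph{out} of the top row while the second is not a weak equivalence, so there is nothing in the bottom row for $C_2'$ to sit over. What your recipe actually produces is a width-$(k+1)$ hammock connecting $f \cdot H \cdot g$ to the intermediate hammock $f^{\ast} H \cdot g$ (or, dually, to $f \cdot g_{\ast} H$), never to $g_{\ast} f^{\ast} H$ in one step.

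This is precisely why the paper's proof pastes two commuting squares: first the case $g = \id_B$, where only the pullback squares intervene and every vertical arrow points from the $C_1'$-side down to the $A'$-side, then the case $f = \id_A$, where only the pushout squares intervene with the opposite orientation, and the two squares are composed. The reduction to one-sided cases is thus forced by the hammock axioms rather than being a mere simplification; the general naturality square commutes via the zigzag $g_{\ast} f^{\ast} H \sim f^{\ast} H \cdot g \sim f \cdot H \cdot g$, not via a single interpolating hammock as you propose.
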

\begin{proof}
By pasting commutative diagrams, we may reduce to the case where either $A' \to A$ or $B \to B'$ is an identity morphism, which is straightforward.
\end{proof}

\begin{prop}
\label{prop:special.hammocks.vs.reduced.hammocks}
The obvious morphism $\nv{\mathcal{T} \argp{A, B}} \to \ulHom[\LH \mathcal{M}]{A}{B}$ is a weak homotopy equivalence.
\end{prop}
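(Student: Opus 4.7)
The plan is to apply Grothendieck's homotopy cofinality theorem (\autoref{thm:Grothendieck.homotopy.cofinality}), or equivalently Quillen's Theorem~A (\autoref{thm:Quillen-A}), to an inclusion of categories whose nerves compute the two sides. I would first identify $\ulHom[\LH \mathcal{M}]{A}{B}$ up to weak homotopy equivalence with the nerve of a suitable large category $\mathcal{R}(A,B)$ of reduced hammocks, and then factor the inclusion $\mathcal{T}(A,B) \embedinto \mathcal{R}(A,B)$ through an intermediate category $\widetilde{\mathcal{T}}(A,B)$ of ``extended special hammocks'' of arbitrary length, showing each of the resulting inclusions is left aspherical.

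Concretely, the steps are as follows. First, let $\mathcal{R}(A,B)$ denote the category whose objects are width-$0$ reduced hammocks from $A$ to $B$ (of any length and any admissible type) and whose morphisms are width-$1$ reduced hammocks between them. Using a diagonal argument in the spirit of \autoref{lem:diagonal.is.hocolim} --- with width playing the role of the simplicial direction --- identify $\ulHom[\LH \mathcal{M}]{A}{B}$ with $\nv{\mathcal{R}(A,B)}$ up to weak homotopy equivalence. Second, let $\widetilde{\mathcal{T}}(A,B)$ be the full subcategory of $\mathcal{R}(A,B)$ consisting of those reduced hammocks whose outermost left column consists of trivial fibrations and whose outermost right column consists of trivial cofibrations; so that $\mathcal{T}(A,B) \subseteq \widetilde{\mathcal{T}}(A,B) \subseteq \mathcal{R}(A,B)$ and the original map factors as
\[
\nv{\mathcal{T}(A,B)} \to \nv{\widetilde{\mathcal{T}}(A,B)} \to \nv{\mathcal{R}(A,B)} \simeq \ulHom[\LH \mathcal{M}]{A}{B}.
\]
Third, show that both inclusions are left aspherical and invoke \autoref{thm:Quillen-A}: for $\widetilde{\mathcal{T}}(A,B) \embedinto \mathcal{R}(A,B)$, the comma fibre over any reduced hammock $h$ is shown to be weakly contractible by applying functorial factorisation to the outermost arrows of $h$ to produce a canonical lift into $\widetilde{\mathcal{T}}(A,B)$; for $\mathcal{T}(A,B) \embedinto \widetilde{\mathcal{T}}(A,B)$, the comma fibre over a length-$n$ extended hammock $h$ is shown to be weakly contractible by collapsing adjacent opposite-direction columns of $h$ via functorial factorisations into a canonical length-$3$ special hammock.

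The main obstacle is verifying the left asphericity in the second reduction, where a length-$n$ hammock must be collapsed to length $3$. Naively one faces a large, incoherent collection of factorisation choices at each ``zig-zag'' of adjacent columns; the remedy is the paper's standing assumption of \emph{functorial} factorisations, which rigidifies each individual choice and makes the collapse process canonical. One then verifies that the comma category of collapses is directed --- in good cases it has a terminal object given by the iterated functorial factorisation --- whence its nerve is weakly contractible, as required. The first reduction (extending boundaries to trivial fib/cof) is simpler since only the two outermost columns need to be adjusted, but it uses the same functorial-factorisation mechanism.
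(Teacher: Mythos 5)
Your first step rests on a false premise: $\ulHom[\LH \mathcal{M}]{A}{B}$ is not the nerve of a category of reduced hammocks, and the proposed $\mathcal{R}(A,B)$ is not well-defined. A reduced hammock of width $1$ has two rows of the same length, so there would be no morphisms between width-$0$ hammocks of different lengths; worse, the top and bottom rows of a reduced width-$1$ hammock need not themselves be reduced as width-$0$ hammocks, so the putative source and target of a morphism need not lie in the object set. What is true in Dwyer--Kan's setup is that for each fixed \emph{type} $T$ (an alternating word in $\inv{\mathcal{W}}$ and $\mathcal{M}$) there is a genuine category of hammocks of type $T$ whose nerve maps to $\ulHom[\LH \mathcal{M}]{A}{B}$, and the hom-space is a filtered colimit of these nerves over types --- not itself a nerve. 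The ``diagonal argument'' you invoke also does not apply: width already \emph{is} the simplicial direction of $\ulHom[\LH \mathcal{M}]{A}{B}$, so there is no second simplicial direction to diagonalise against.

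The second reduction is also not a routine cofinality check. Collapsing an arbitrary-length zigzag to the three-arrow type $\inv{\mathcal{W}} \mathcal{M} \inv{\mathcal{W}}$ while controlling asphericity is precisely the content of Proposition 8.2 of \citet{Dwyer-Kan:1980b}, one of the substantial results of that paper; the assertion that the relevant comma categories are directed or have terminal objects ``in good cases'' is not substantiated and does not obviously hold. Functorial factorisations make individual choices canonical, but they do not by themselves make the category of collapses contractible. The paper's proof sidesteps all of this: it quotes Propositions 6.2 and 8.2 of \citet{Dwyer-Kan:1980b} for the passage from $\nv{\inv{\mathcal{W}} \mathcal{M} \inv{\mathcal{W}} \argp{A,B}}$ to $\ulHom[\LH \mathcal{M}]{A}{B}$, and only checks the genuinely easy piece --- that the inclusion $\mathcal{T} \argp{A,B} \embedinto \inv{\mathcal{W}} \mathcal{M} \inv{\mathcal{W}} \argp{A,B}$ induces a weak homotopy equivalence of nerves, which is where functorial factorisations actually earn their keep. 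Your intermediate category $\widetilde{\mathcal{T}}(A,B)$ is in the spirit of this last step, but the collapse from arbitrary length to length $3$ is the hard part, and you should cite it rather than attempt to reprove it in a sketch.
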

\begin{proof} \openproof
It is straightforward (using the functorial factorisations of $\mathcal{M}$) to show that  the inclusion $\mathcal{T} \argp{A, B} \embedinto \inv{\mathcal{W}} \mathcal{M} \inv{\mathcal{W}} \argp{A, B}$ induces a weak homotopy equivalence of nerves. The claim is then a consequence of Propositions 6.2 and 8.2 in \citep{Dwyer-Kan:1980b}.
\end{proof}

\begin{prop}
\label{prop:cofinality.of.resolutions}
\ \noprelistbreak
\begin{itemize}
\item Let $B$ be an object in $\mathcal{M}$, let $\hat{B}_{\bullet}$ be a simplicial resolution in $\mathcal{M}$, let $i_{\bullet} : \skel[0]{B} \to \hat{B}_{\bullet}$ be a degreewise trivial cofibration, and let $\parens{\undercat{B}{\mathcal{W}}}_\mathrm{c}$ be the full subcategory of the slice category $\undercat{B}{\mathcal{W}}$ spanned by the trivial cofibrations with domain $B$. Then the diagram $I : \op{\cat{\Simplex}} \to \parens{\undercat{B}{\mathcal{W}}}_\mathrm{c}$ corresponding to $i_{\bullet}$ is a left aspherical functor.

\item Let $A$ be an object in $\mathcal{M}$, let $\tilde{A}^{\bullet}$ be a cosimplicial resolution in $\mathcal{M}$, let $p^{\bullet} : \tilde{A}^{\bullet} \to \ccoskel[0]{A}$ be a degreewise trivial fibration, and let $\parens{\overcat{\mathcal{W}}{A}}_\mathrm{f}$ be the full subcategory of the slice category $\overcat{\mathcal{W}}{A}$ spanned by the trivial fibrations with codomain $A$. Then the diagram $P : \cat{\Simplex} \to \parens{\overcat{\mathcal{W}}{A}}_\mathrm{f}$ corresponding to $p^{\bullet}$ is a right aspherical functor.
\end{itemize}
\end{prop}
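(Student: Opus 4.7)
The two claims are formally dual: applying the first to $\op{\mathcal{M}}$ (in which a cosimplicial resolution becomes a simplicial resolution and the two slice categories swap roles), together with the duality between left and right aspherical functors, yields the second. I would focus on the first. By definition, it suffices to show that for every object $j \colon B \to X$ of $\parens{\undercat{B}{\mathcal{W}}}_\mathrm{c}$, the nerve of the comma category $\commacat{j}{I}$ is weakly contractible.

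The first step is to unwind the slice structure: an object of $\commacat{j}{I}$ is a pair $\tuple{\bracket{n}, \alpha}$ with $\alpha \colon X \to \hat{B}_n$ satisfying $\alpha \circ j = i_n$ (automatically a weak equivalence by 2-out-of-3), and a morphism is a map in $\op{\cat{\Simplex}}$ whose induced structure map on $\hat{B}_{\bullet}$ intertwines the $\alpha$'s. This identifies $\commacat{j}{I}$ with the lax colimit of the discrete-valued functor $F \colon \op{\cat{\Simplex}} \to \cat{\Cat}$ defined by $F\bracket{n} = \set{\alpha \in \Hom[\mathcal{M}]{X}{\hat{B}_n}}{\alpha \circ j = i_n}$. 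Combining \autoref{thm:Thomason.hocolim} with \autoref{lem:diagonal.is.hocolim} (and the fact that the diagonal of a bisimplicial set constant in one variable is just the simplicial set in the other) produces a natural zigzag of weak homotopy equivalences between $\nv{\commacat{j}{I}}$ and $F$ viewed as a simplicial set.

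The second step is to recognise $F$ as the fibre over $i_0$ of the map $j^* \colon \sHom[\mathcal{M}]{X}{\hat{B}} \to \sHom[\mathcal{M}]{B}{\hat{B}}$ of right hom-complexes: since $i_{\bullet}$ is a simplicial map with constant source $\skel[0]{B}$, the 0-simplex $i_0$ extends through the degeneracies to $i_n$ at level $n$. It therefore suffices to show that $j^*$ is a trivial Kan fibration.

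This last step is the main obstacle. The plan is the classical ``Reedy SM7'' argument (\confer Hirschhorn \Sect 16): a lifting problem for $j^*$ against $\partial \Delta^n \embedinto \Delta^n$ corresponds, via the matching-object description $\Hom[\cat{\SSet}]{\partial \Delta^n}{\sHom[\mathcal{M}]{B}{\hat{B}}} \cong \Hom[\mathcal{M}]{B}{M_n \hat{B}}$, to a lifting problem in $\mathcal{M}$ between the trivial cofibration $j$ and the matching fibration $\hat{B}_n \to M_n \hat{B}$ (provided by Reedy fibrancy of $\hat{B}_{\bullet}$), which admits a solution by the model category axioms. Our standing assumption of only finite limits in $\mathcal{M}$ poses no obstacle here, since each matching object is a finite limit.
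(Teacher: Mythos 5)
Your argument is correct and gives a self-contained proof where the paper merely cites Dwyer and Kan (Propositions 6.11 and 6.12 of their third paper); it is a genuine addition to the text rather than a retelling. The structure is clean: reduce by duality to the simplicial case; observe that the comma category $\commacat{j}{I}$ is exactly the category of simplices of the simplicial set $F$ whose $n$-simplices are the maps $\alpha \colon X \to \hat{B}_n$ with $\alpha j = i_n$ (your identification of $\commacat{j}{I}$ with $\laxcolim[\op{\cat{\Simplex}}] F$ is verified easily, with the weak-equivalence condition on $\alpha$ automatic by 2-out-of-3); pass from the nerve of that category to $F$ itself via Thomason's theorem and the Bousfield--Kan diagonal comparison; recognise $F$ as the fibre of $j^* \colon \sHom[\mathcal{M}]{X}{\hat{B}} \to \sHom[\mathcal{M}]{B}{\hat{B}}$ over the vertex $i_0$ (whose iterated degeneracies are the $i_n$, by naturality of $i_{\bullet}$); and show $j^*$ is a trivial Kan fibration by translating the lifting problem against $\partial\Delta^n \embedinto \Delta^n$ into a lifting problem in $\mathcal{M}$ of the trivial cofibration $j$ against the matching map $\hat{B}_n \to M_n\hat{B}$, which is a fibration by Reedy fibrancy. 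Your remark that finite limits suffice (matching objects being finite limits) is correct and worth saying given the paper's standing hypothesis. Two minor points of taste: the passage from the nerve of the category of simplices of $F$ to $F$ is a classical fact that does not strictly need the full strength of Thomason's theorem, though invoking \autoref{thm:Thomason.hocolim} and \autoref{lem:diagonal.is.hocolim} is perfectly consonant with the toolkit the paper has set up; and one does not in fact use weak constancy of $\hat{B}_{\bullet}$ at all in this step --- Reedy fibrancy alone suffices because $j$ is a \emph{trivial} cofibration --- so the hypothesis that $\hat{B}_{\bullet}$ is a simplicial resolution could be weakened to Reedy fibrancy here, though of course the weaker hypothesis never arises in the application.
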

\begin{proof} \openproof
This is essentially Propositions 6.11 and 6.12 in \citep{Dwyer-Kan:1980c}.
\end{proof}

\begin{prop}
\label{prop:homotopy.function.complexes.vs.special.hammocks}
\ \noprelistbreak
\begin{enumerate}[(i)]
\item Let $A$ and $B$ be objects in $\mathcal{M}$, let $\tilde{A}^{\bullet}$ be a cosimplicial resolution in $\mathcal{M}$, let $\hat{B}_{\bullet}$ be a simplicial resolution in $\mathcal{M}$, let $p^{\bullet} : \tilde{A}^{\bullet} \to \ccoskel[0]{A}$ be a degreewise trivial fibration, and let $i_{\bullet} : \skel[0]{B} \to \hat{B}_{\bullet}$ be a degreewise trivial cofibration. Then we have a diagram of weak homotopy equivalences of the form below:
\[
\begin{tikzcd}[column sep=0.0ex, row sep=3.0ex]
{} &
\KBcolim[n : \op{\cat{\Simplex}}] \BKcolim[m : \op{\cat{\Simplex}}] \discr \Hom[\mathcal{M}]{\tilde{A}^n}{\hat{B}_m} \dlar \drar \\
\sHom[\mathcal{M}]{\tilde{A}}{\hat{B}} &&
\nv{\mathcal{T} \argp{A, B}}
\end{tikzcd}
\]

\item Moreover, the above diagram is natural in the following sense: given commutative diagrams in $\mbfc \mathcal{M}$ and $\mbfs \mathcal{M}$ of the forms below,
\[
\mmhfill
\begin{tikzcd}
\ccoskel[0]{A} \dar[leftarrow] \rar[leftarrow]{p^{\bullet}} &
\tilde{A}^{\bullet} \dar[leftarrow] \\
\ccoskel[0]{A'} \rar[leftarrow, swap]{p^{\prime \bullet}} &
\tilde{A}^{\prime \bullet}
\end{tikzcd}
\mmhfill
\begin{tikzcd}
\hat{B}_{\bullet} \dar \rar[leftarrow]{i_{\bullet}} &
\skel[0]{B} \dar \\
\hat{B}' \rar[leftarrow, swap]{i'_{\bullet}} &
\skel[0]{B'}
\end{tikzcd}
\mmhfill
\]
the following diagram commutes in $\Ho \cat{\SSet}$,
\[
\begin{tikzcd}[column sep=0.0ex, row sep=3.0ex]
{} &
\KBcolim[n : \op{\cat{\Simplex}}] \BKcolim[m : \op{\cat{\Simplex}}] \discr \Hom[\mathcal{M}]{\tilde{A}^n}{\hat{B}_m} \arrow{dd} \dlar \drar \\
\sHom[\mathcal{M}]{\tilde{A}}{\hat{B}} \arrow{dd} &&
\nv{\mathcal{T} \argp{A, B}} \arrow{dd} \\
&
\KBcolim[n : \op{\cat{\Simplex}}] \BKcolim[m : \op{\cat{\Simplex}}] \discr \Hom[\mathcal{M}]{\tilde{A}^{\prime n}}{\hat{B}'_m} \dlar \drar \\
\sHom[\mathcal{M}]{\tilde{A}'}{\hat{B}'} &&
\nv{\mathcal{T} \argp{A', B'}}
\end{tikzcd}
\]
where the vertical arrows are the evident induced morphisms.
\end{enumerate}
\end{prop}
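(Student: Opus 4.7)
The plan is to construct both arrows of the span as composites of weak equivalences and then verify naturality of each constituent. The arrow to $\sHom[\mathcal{M}]{\tilde{A}}{\hat{B}}$ uses the Bousfield--Kan comparison (\autoref{lem:diagonal.is.hocolim}) applied twice; the arrow to $\nv{\mathcal{T} \argp{A, B}}$ uses Thomason's theorem (\autoref{thm:Thomason.hocolim}) and homotopy cofinality (\autoref{thm:Grothendieck.homotopy.cofinality}) alternately, culminating in the identification of a double Grothendieck construction with $\mathcal{T} \argp{A, B}$ on the nose.

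For the first arrow, the key observation is that $\discr \Hom[\mathcal{M}]{\tilde{A}^n}{\hat{B}_m}$ is constant in its internal simplicial direction, so $m \mapsto \discr \Hom[\mathcal{M}]{\tilde{A}^n}{\hat{B}_m}$ is a bisimplicial set whose diagonal is $k \mapsto \Hom[\mathcal{M}]{\tilde{A}^n}{\hat{B}_k} = \sHom[\mathcal{M}]{\tilde{A}^n}{\hat{B}_\bullet}_k$. \autoref{lem:diagonal.is.hocolim} then supplies a weak equivalence $\BKcolim[m : \op{\cat{\Simplex}}] \discr \Hom[\mathcal{M}]{\tilde{A}^n}{\hat{B}_m} \to \sHom[\mathcal{M}]{\tilde{A}^n}{\hat{B}_\bullet}$ natural in $n$. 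I would then apply $\KBcolim[n]$ (homotopy-invariant by \autoref{lem:Bousfield-Kan.homotopical.properties}) and then \autoref{lem:diagonal.is.hocolim} a second time to the bisimplicial set $(n, k) \mapsto \Hom[\mathcal{M}]{\tilde{A}^n}{\hat{B}_k}$, whose diagonal is $\sHom[\mathcal{M}]{\tilde{A}}{\hat{B}}$.

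For the second arrow, Thomason's theorem applied to $m \mapsto \discr \Hom[\mathcal{M}]{\tilde{A}^n}{\hat{B}_m}$ yields a weak equivalence to $\nv{\laxcolim_{\op{\cat{\Simplex}}} \discr \Hom[\mathcal{M}]{\tilde{A}^n}{\hat{B}_\bullet}}$. Define $\mathcal{F} : \op{\parens{\overcat{\mathcal{W}}{A}}_\mathrm{f}} \to \cat{\Cat}$ by $\mathcal{F} \argp{X \to A} = \laxcolim_{\parens{\undercat{B}{\mathcal{W}}}_\mathrm{c}} \discr \Hom[\mathcal{M}]{X}{\blank}$; since $I$ is left aspherical (\autoref{prop:cofinality.of.resolutions}), \autoref{thm:Grothendieck.homotopy.cofinality}(ii) together with \autoref{thm:Quillen-A} gives a weak equivalence $\nv{\laxcolim_{\op{\cat{\Simplex}}} \discr \Hom[\mathcal{M}]{\tilde{A}^n}{\hat{B}_\bullet}} \to \nv{\mathcal{F} \argp{\tilde{A}^n}}$. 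Applying $\KBcolim[n]$, then Thomason for the outer colim, and finally homotopy cofinality along $\op{P}$---which is left aspherical because $P$ is right aspherical by \autoref{prop:cofinality.of.resolutions}---produces a weak equivalence to $\nv{\oplaxcolim_{\op{\parens{\overcat{\mathcal{W}}{A}}_\mathrm{f}}} \mathcal{F}}$. A direct unpacking identifies this Grothendieck construction with $\mathcal{T} \argp{A, B}$ on the nose: its objects are triples consisting of a trivial fibration $X \to A$, a trivial cofibration $B \to Y$, and a map $X \to Y$, and its morphisms are compatible pairs of weak equivalences---exactly the data defining $\mathcal{T} \argp{A, B}$.

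For part (ii), each step in the two legs is natural in $\tilde{A}^\bullet$ and $\hat{B}_\bullet$---the Bousfield--Kan comparison, Thomason's comparison, and the cofinality comparisons are all universal constructions---and the induced functor $\mathcal{T} \argp{A, B} \to \mathcal{T} \argp{A', B'}$ of the pseudofunctor preceding \autoref{lem:natural.morphism.of.special.hammocks} corresponds, under the Grothendieck identification, to the map induced by pre- and post-composition with the given morphisms of resolutions. The main obstacle is keeping track of the variances and opposite-category twists in the iterated Grothendieck construction; once the identification $\oplaxcolim_{\op{\parens{\overcat{\mathcal{W}}{A}}_\mathrm{f}}} \mathcal{F} \cong \mathcal{T} \argp{A, B}$ is verified, both squares commute in $\Ho \cat{\SSet}$ by inspection of each step in the chain.
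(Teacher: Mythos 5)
Your treatment of part (i) is correct and follows essentially the same route as the paper: the Bousfield--Kan comparison (\autoref{lem:diagonal.is.hocolim}) applied twice for the left leg, and Thomason's theorem (\autoref{thm:Thomason.hocolim}) plus homotopy cofinality (\autoref{thm:Grothendieck.homotopy.cofinality} with \autoref{thm:Quillen-A} and \autoref{prop:cofinality.of.resolutions}) for the right leg, terminating in the identification of the iterated Grothendieck construction with $\mathcal{T}\argp{A,B}$. You interleave Thomason and cofinality where the paper applies Thomason twice and then cofinality in one go, but this is an immaterial reordering; the intermediate objects and the verifications are the same.

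Part (ii), however, has a genuine gap. You assert that the map $\mathcal{T}\argp{A,B} \to \mathcal{T}\argp{A',B'}$ ``corresponds, under the Grothendieck identification, to the map induced by pre- and post-composition with the given morphisms of resolutions,'' and that the relevant squares then ``commute in $\Ho\cat{\SSet}$ by inspection.'' This misidentifies the map and hides the one step that actually requires an argument. The pseudofunctor in the remark before \autoref{lem:natural.morphism.of.special.hammocks} acts on $\mathcal{T}\argp{A,B}$ by \emph{pullback} along $A' \to A$ and \emph{pushout} along $B \to B'$, not by pre- or post-composition; whereas the left vertical map on the Grothendieck side is induced by composing with the resolution morphisms $\tilde{A}^{\prime\bullet}\to\tilde{A}^{\bullet}$ and $\hat{B}_{\bullet}\to\hat{B}'_{\bullet}$. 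Concretely, after reducing to the case where only $B$ varies, one composite sends $\parens{n,m,\phi}$ to the hammock $A \leftarrow \tilde{A}^n \to \hat{B}_m \sqcup_B B' \leftarrow B'$ while the other sends it to $A \leftarrow \tilde{A}^n \to \hat{B}'_m \leftarrow B'$; these do not agree on the nose. The square is therefore filled only by a natural transformation supplied by the universal property of the pushout (the canonical map $\hat{B}_m \sqcup_B B' \to \hat{B}'_m$), and it is this $2$-cell --- after passing to nerves --- that makes the square commute in $\Ho\cat{\SSet}$. Exhibiting this natural transformation (after a reduction by pasting to the two cases where only $A$ or only $B$ varies, which are formally dual) is the entire content of part (ii), and your proposal does not supply it.
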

\begin{proof}
(i). We follow paragraph 7.2 in \citep{Dwyer-Kan:1980c}. By applying \autoref{lem:diagonal.is.hocolim} (twice), we obtain a natural weak homotopy equivalence of the following type:
\[
\textstyle \KBcolim[\op{\cat{\Simplex}}] \BKcolim[\op{\cat{\Simplex}}] \discr \Hom[\mathcal{M}]{\tilde{A}^{\bullet}}{\hat{B}_{\bullet}} \to \sHom[\mathcal{M}]{\tilde{A}}{\hat{B}}
\]
On the other hand, by \autoref{lem:Bousfield-Kan.homotopical.properties} and Thomason's homotopy colimit theorem (\ref{thm:Thomason.hocolim}), we have a natural weak homotopy equivalence
\[
\textstyle \KBcolim[\op{\cat{\Simplex}}] \BKcolim[\op{\cat{\Simplex}}] \discr \Hom[\mathcal{M}]{\tilde{A}^{\bullet}}{\hat{B}_{\bullet}} \to \nv{\oplaxcolim[\op{\cat{\Simplex}}] \laxcolim[\op{\cat{\Simplex}}] \discr \Hom[\mathcal{M}]{\tilde{A}^{\bullet}}{\hat{B}_{\bullet}}}
\]
and recalling Quillen's Theorem A (\ref{thm:Quillen-A}) and the homotopy cofinality theorem (\ref{thm:Grothendieck.homotopy.cofinality}), \autoref{prop:cofinality.of.resolutions} implies there is a weak homotopy equivalence
\[
\hspace{-0.5in}
\textstyle \nv{\oplaxcolim[\op{\cat{\Simplex}}] \laxcolim[\op{\cat{\Simplex}}] \discr \Hom[\mathcal{M}]{\tilde{A}^{\bullet}}{\hat{B}_{\bullet}}} \to \nv{\oplaxcolim[\op{\parens{\overcat{\mathcal{W}}{A}}_\mathrm{f}}] \laxcolim[\parens{\undercat{B}{\mathcal{W}}}_\mathrm{c}] \discr \Hom[\mathcal{M}]{Q}{R}}
\hspace{-0.5in}
\]
where $Q : \parens{\overcat{\mathcal{W}}{A}}_\mathrm{f} \to \mathcal{M}$ and $R : \parens{\undercat{B}{\mathcal{W}}}_\mathrm{c} \to \mathcal{M}$ are the evident projection functors; but it is straightforward to check that
\[
\textstyle \oplaxcolim[\op{\parens{\overcat{\mathcal{W}}{A}}_\mathrm{f}}] \laxcolim[\parens{\undercat{B}{\mathcal{W}}}_\mathrm{c}] \discr \Hom[\mathcal{M}]{Q}{R} \cong \mathcal{T} \argp{A, B}
\]
so we are done. 

\bigskip\noindent
(ii). Naturality implies that the left half of the diagram in question commutes strictly, \ie
\[
\begin{tikzcd}
\KBcolim[\op{\cat{\Simplex}}] \BKcolim[\op{\cat{\Simplex}}] \discr \Hom[\mathcal{M}]{\tilde{A}^{\bullet}}{\hat{B}_{\bullet}} \dar \rar &
\sHom[\mathcal{M}]{\tilde{A}}{\hat{B}} \dar \\
\KBcolim[\op{\cat{\Simplex}}] \BKcolim[\op{\cat{\Simplex}}] \discr \Hom[\mathcal{M}]{\tilde{A}^{\prime \bullet}}{\hat{B}'_{\bullet}} \rar &
\sHom[\mathcal{M}]{\tilde{A}'}{\hat{B}'}
\end{tikzcd}
\]
commutes in $\cat{\SSet}$; and similarly,
\[
\begin{tikzcd}
\KBcolim[\op{\cat{\Simplex}}] \BKcolim[\op{\cat{\Simplex}}] \discr \Hom[\mathcal{M}]{\tilde{A}^{\bullet}}{\hat{B}_{\bullet}} \dar \rar &
\nv{\oplaxcolim[\op{\cat{\Simplex}}] \laxcolim[\op{\cat{\Simplex}}] \discr \Hom[\mathcal{M}]{\tilde{A}^{\bullet}}{\hat{B}_{\bullet}}} \dar \\
\KBcolim[\op{\cat{\Simplex}}] \BKcolim[\op{\cat{\Simplex}}] \discr \Hom[\mathcal{M}]{\tilde{A}^{\prime \bullet}}{\hat{B}'_{\bullet}} \rar &
\nv{\oplaxcolim[\op{\cat{\Simplex}}] \laxcolim[\op{\cat{\Simplex}}] \discr \Hom[\mathcal{M}]{\tilde{A}^{\prime \bullet}}{\hat{B}'_{\bullet}}}
\end{tikzcd}
\]
also commutes in $\cat{\SSet}$, so it suffices to verify that the evident diagram
\[
\begin{tikzcd}
\nv{\oplaxcolim[\op{\cat{\Simplex}}] \laxcolim[\op{\cat{\Simplex}}] \discr \Hom[\mathcal{M}]{\tilde{A}^{\prime \bullet}}{\hat{B}'_{\bullet}}} \dar \rar &
\nv{\mathcal{T} \argp{A, B}} \dar \\
\nv{\oplaxcolim[\op{\cat{\Simplex}}] \laxcolim[\op{\cat{\Simplex}}] \discr \Hom[\mathcal{M}]{\tilde{A}^{\prime \bullet}}{\hat{B}'_{\bullet}}} \rar &
\nv{\mathcal{T} \argp{A', B'}}
\end{tikzcd}
\]
commutes in $\Ho \cat{\SSet}$. By pasting commutative diagrams, we may reduce the problem to the following two cases:
\begin{itemize}
\item Both $\tilde{A}^{\prime \bullet} \to \tilde{A}^{\bullet}$ and $A' \to A$ are identity morphisms.

\item Both $\hat{B}_{\bullet} \to \hat{B}'_{\bullet}$ and $B \to B'$ are identity morphisms.
\end{itemize}
Furthermore, the two cases are formally dual, so it is enough to check the first case. But the universal property of pushouts yields a natural transformation fitting into the diagram below,
\[
\begin{tikzpicture}[commutative diagrams/every diagram]
\matrix[matrix of math nodes, name=m]{
	\oplaxcolim[\op{\cat{\Simplex}}] \laxcolim[\op{\cat{\Simplex}}] \discr \Hom[\mathcal{M}]{\tilde{A}^{\bullet}}{\hat{B}_{\bullet}} &
	\mathcal{T} \argp{A, B} \\
	\oplaxcolim[\op{\cat{\Simplex}}] \laxcolim[\op{\cat{\Simplex}}] \discr \Hom[\mathcal{M}]{\tilde{A}^{\bullet}}{\hat{B}'_{\bullet}} &
	\mathcal{T} \argp{A, B'} \\
};
\path[commutative diagrams/.cd, every arrow, every label]
	(m-1-1) edge (m-2-1)
	(m-1-1) edge (m-1-2)
	(m-1-2) edge (m-2-2)
	(m-2-1) edge (m-2-2);
\path (m-1-2) -- (m-2-1)
	node[near start](a-1){} node[near end](b-1){};
\path[commutative diagrams/.cd, every arrow, every label]
	(a-1) edge[commutative diagrams/Rightarrow] (b-1);
\end{tikzpicture}
\]
so we are done.
\end{proof}

\begin{thm}
\label{thm:derived.hom.spaces.via.hammock.localisation}
There is an isomorphism
\[
\RHom[\mathcal{M}]{\blank}{\blank} \cong \ulHom[\LH \mathcal{M}]{\blank}{\blank}
\]
of functors $\op{\Ho \mathcal{M}} \times \Ho \mathcal{M} \to \Ho \cat{\SSet}$.
\end{thm}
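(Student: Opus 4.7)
The plan is to assemble the chain of isomorphisms in $\Ho \cat{\SSet}$
\[
\RHom[\mathcal{M}]{A}{B} \cong \sHom[\mathcal{M}]{\tilde{A}}{\hat{B}} \cong \nv{\mathcal{T}\argp{A, B}} \cong \ulHom[\LH \mathcal{M}]{A}{B}
\]
supplied by the preceding results, and then to upgrade this pointwise statement to a natural isomorphism of functors $\op{\Ho \mathcal{M}} \times \Ho \mathcal{M} \to \Ho \cat{\SSet}$. First I would fix functorial choices of resolutions, assigning to each $A$ in $\mathcal{M}$ a cosimplicial resolution $\tilde{A}^{\bullet}$ equipped with a degreewise trivial fibration $p^{\bullet} : \tilde{A}^{\bullet} \to \ccoskel[0]{A}$, and to each $B$ a simplicial resolution $\hat{B}_{\bullet}$ equipped with a degreewise trivial cofibration $i_{\bullet} : \skel[0]{B} \to \hat{B}_{\bullet}$. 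The defining isomorphism $\RHom[\mathcal{M}]{\pblank^0}{\pblank_0} \cong \sHom[\mathcal{M}]{\blank}{\blank}$ then yields the first link, using that $\Ho \pblank^0$ and $\Ho \pblank_0$ factor through equivalences $\Ho \mbfc_\mathrm{r} \mathcal{M} \simeq \Ho \mathcal{M}$ and $\Ho \mbfs_\mathrm{r} \mathcal{M} \simeq \Ho \mathcal{M}$.

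For the second link I would invoke \autoref{prop:homotopy.function.complexes.vs.special.hammocks}(i), which provides a zigzag of natural weak homotopy equivalences between $\sHom[\mathcal{M}]{\tilde{A}}{\hat{B}}$ and $\nv{\mathcal{T}\argp{A, B}}$ through the common vertex $\KBcolim[n : \op{\cat{\Simplex}}] \BKcolim[m : \op{\cat{\Simplex}}] \discr \Hom[\mathcal{M}]{\tilde{A}^n}{\hat{B}_m}$; the third link is \autoref{prop:special.hammocks.vs.reduced.hammocks}. Naturality of the composite isomorphism with respect to morphisms $A' \to A$ and $B \to B'$ in $\mathcal{M}$ is then immediate from \autoref{prop:homotopy.function.complexes.vs.special.hammocks}(ii) together with \autoref{lem:natural.morphism.of.special.hammocks}, which between them ensure that every relevant square commutes in $\Ho \cat{\SSet}$.

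Finally, to promote naturality along $\mathcal{M}$-morphisms to naturality on $\op{\Ho \mathcal{M}} \times \Ho \mathcal{M}$, I would use that both functors invert weak equivalences of $\mathcal{M}$ in each variable: for $\RHom[\mathcal{M}]$ this is automatic from the derived-functor formalism, and for $\ulHom[\LH \mathcal{M}]$ it is a standard property of the hammock localisation. Since every morphism in $\Ho \mathcal{M}$ is represented by a zigzag of $\mathcal{M}$-morphisms, the pointwise isomorphism together with strict naturality along $\mathcal{M}$-morphisms determines a natural transformation on $\op{\Ho \mathcal{M}} \times \Ho \mathcal{M}$, which is automatically an isomorphism. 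The main obstacle I anticipate is precisely this naturality bookkeeping: the intermediate term $\nv{\mathcal{T}\argp{A, B}}$ is only pseudofunctorial in $\tuple{A, B}$, and several of the squares produced by \autoref{prop:homotopy.function.complexes.vs.special.hammocks}(ii) commute only after passing to $\Ho \cat{\SSet}$, so one must check carefully that the pasted-together zigzag really descends to a genuine natural isomorphism and does not depend on the choice of resolutions.
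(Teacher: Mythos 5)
Your proof is correct and follows essentially the same route as the paper, whose own proof is simply the one-liner ``combine Lemma~\ref{lem:natural.morphism.of.special.hammocks} and propositions~\ref{prop:special.hammocks.vs.reduced.hammocks} and~\ref{prop:homotopy.function.complexes.vs.special.hammocks}.'' Your final paragraph correctly identifies the bookkeeping points (pseudofunctoriality of $\mathcal{T}$, commutativity only in $\Ho\cat{\SSet}$, descent to $\Ho\mathcal{M}$), but these are already handled: lemma~\ref{lem:natural.morphism.of.special.hammocks} and \autoref{prop:homotopy.function.complexes.vs.special.hammocks}(ii) are stated in $\Ho\cat{\SSet}$ precisely to absorb the pseudo-naturality, and the passage from naturality over $\op{\mathcal{M}}\times\mathcal{M}$ to naturality over $\op{\Ho\mathcal{M}}\times\Ho\mathcal{M}$ is exactly the full-faithfulness part of the universal property of localisation.
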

\begin{proof}
Combine \autoref{lem:natural.morphism.of.special.hammocks} and propositions~\ref{prop:special.hammocks.vs.reduced.hammocks} and~\ref{prop:homotopy.function.complexes.vs.special.hammocks}.
\end{proof}

\section{Bousfield localisation and simplicial localisation}

Let $\mathcal{C}$ be a small category and let $\mathcal{W}$ be a subcategory of weak equivalences. Recall the following definitions from \citep{Dwyer-Kan:1980a}:

\begin{dfn}
\ \noprelistbreak
\begin{itemize}
\item The \strong{standard resolution} of a category $\mathcal{A}$ is the simplicial category $F_{\bullet} \mathcal{A}$, where the $0$-th level is the free category generated by the underlying reflexive graph of $\mathcal{A}$ and the $\parens{n + 1}$-th level is the free category generated by the underlying reflexive graph of the $n$-th level.

\item The \strong{simplicial localisation} of $\mathcal{C}$ is the simplicially enriched category $\ul{\totalL \mathcal{C}}$ corresponding to the simplicial category $F_{\bullet} \mathcal{C} \argb{\inv{F_{\bullet} \mathcal{W}}}$ obtained by inverting $F_{\bullet} \mathcal{W}$ in $F_{\bullet} \mathcal{C}$ levelwise.
\end{itemize}
\end{dfn}

\begin{prop}
\label{prop:local.model.structure.on.simplicial.presheaves}
Let $\homs : \mathcal{C} \to \Func{\op{\mathcal{C}}}{\cat{\SSet}}$ be the Yoneda embedding, \ie the functor defined by $\homs{B} = \discr \Hom[\mathcal{C}]{\blank}{B}$.
\begin{enumerate}[(i)]
\item The projective model structure on $\Func{\op{\mathcal{C}}}{\cat{\SSet}}$ exists.

\item The left Bousfield localisation of the projective model structure with respect to  $\set{ \homs{w} }{ w \in \mor \mathcal{W} }$ exists.

\item A simplicial presheaf $P : \op{\mathcal{C}} \to \cat{\SSet}$ is fibrant in the localised model structure if and only if $P$ is projective-fibrant and sends weak equivalences in $\mathcal{C}$ to weak homotopy equivalences.
\end{enumerate}
\end{prop}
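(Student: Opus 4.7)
The plan is to dispatch the three parts in order, relying on standard machinery for each.

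For (i), the projective model structure on $\Func{\op{\mathcal{C}}}{\cat{\SSet}}$ is well-known to exist and to be cofibrantly generated: the generating cofibrations may be taken to be $\homs{c} \times \partial \Delta^n \embedinto \homs{c} \times \Delta^n$ and the generating trivial cofibrations to be $\homs{c} \times \Lambda^n_k \embedinto \homs{c} \times \Delta^n$, for $c$ ranging over $\ob \mathcal{C}$. The transferred structure inherits from $\cat{\SSet}$ the properties of being simplicial, combinatorial, left proper, and (by a standard verification with the above generators) cellular in the sense of Hirschhorn.

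For (ii), since the projective model structure is left proper and either cellular or combinatorial, Hirschhorn's existence theorem (or, equivalently, Smith's theorem) gives the left Bousfield localisation at any set of morphisms, in particular at $S = \set{\homs{w}}{w \in \mor \mathcal{W}}$. The hypothesis that $\mathcal{C}$ is small ensures that $S$ is indeed a set.

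For (iii), I would apply the standard characterisation of fibrant objects in a left Bousfield localisation: $P$ is fibrant in the localised structure if and only if $P$ is projective-fibrant and $S$-local, in the sense that for every $w : A \to B$ in $\mathcal{W}$, the induced map
\[
\homs{w}^* : \RHom[{\Func{\op{\mathcal{C}}}{\cat{\SSet}}}]{\homs{B}}{P} \to \RHom[{\Func{\op{\mathcal{C}}}{\cat{\SSet}}}]{\homs{A}}{P}
\]
is a weak homotopy equivalence. The final step is to observe that for any object $c$ of $\mathcal{C}$ the representable $\homs{c}$ is projective-cofibrant, so for projective-fibrant $P$ the derived hom-space agrees with the underived one; the enriched Yoneda lemma then identifies the latter with $P\argp{c}$. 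Hence $S$-locality unwinds precisely to the statement that $P\argp{w} : P\argp{B} \to P\argp{A}$ is a weak homotopy equivalence for every $w \in \mor \mathcal{W}$.

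The only substantive verifications are the cellularity (or combinatoriality) and left properness of the projective model structure needed in (ii), both of which are routine transfers from $\cat{\SSet}$; the main conceptual step is the Yoneda-lemma identification in (iii), which requires the (standard) fact that representables are projective-cofibrant.
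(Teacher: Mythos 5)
Your proposal is correct and follows essentially the same strategy as the paper: the paper cites Hirschhorn's Theorems 11.6.1 and 4.1.1 for parts (i) and (ii), and for (iii) combines the local-object characterisation of fibrant objects in a left Bousfield localisation with the enriched Yoneda lemma, exactly as you do. The extra details you supply (generating cofibrations, cellularity/combinatoriality, left properness, projective-cofibrancy of representables) are just the unwound hypotheses behind those same citations.
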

\begin{proof}
(i). Apply Theorem 11.6.1 in \citep{Hirschhorn:2003}.

\bigskip\noindent
(ii). Apply Theorem 4.1.1 in \citep{Hirschhorn:2003}.

\bigskip\noindent
(iii). By Proposition 3.4.1 in \citep{Hirschhorn:2003}, $P$ is fibrant in the localised model structure if and only if it is a local object; and by Example 17.2.4 in \opcit (plus the enriched Yoneda lemma), $P$ is a local object if and only if it is projective-fibrant and sends weak equivalences in $\mathcal{C}$ to weak homotopy equivalences.
\end{proof}

The localised model structure on $\Func{\op{\mathcal{C}}}{\cat{\SSet}}$ allows us to find the best approximation of an arbitrary simplicial presheaf $\op{\mathcal{C}} \to \cat{\SSet}$ by one that sends weak equivalences in $\mathcal{C}$ to weak homotopy equivalences. More precisely:
%

\begin{prop}
Let $R : \Func{\op{\mathcal{C}}}{\cat{\SSet}} \to \Func{\op{\mathcal{C}}}{\cat{\SSet}}$ be a fibrant replacement functor for the localised model structure and let $i : \id \hoto R$ be a natural weak equivalence in the localised model structure. Then for any simplicial presheaf $X : \op{\mathcal{C}} \to \cat{\SSet}$, $\tuple{R X, i_X}$ is a right approximation for $X$.
\end{prop}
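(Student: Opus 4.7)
The plan is to verify the DHKS definition of right approximation component by component; in the setting at hand it collapses to two direct checks about the fibrant replacement $R$ in the localised model structure.

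First I would pin down the homotopical category and the ``right-deformable'' subcategory implicit in the statement. The ambient homotopical category is $\Func{\op{\mathcal{C}}}{\cat{\SSet}}$ equipped with the weak equivalences of the localised model structure, and the relevant subcategory is precisely the one identified by part~(iii) of \autoref{prop:local.model.structure.on.simplicial.presheaves}: the projective-fibrant simplicial presheaves that send weak equivalences in $\mathcal{C}$ to weak homotopy equivalences, i.e.\ the fibrant objects of the localised structure.

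Next I would read off the two conditions a right approximation must satisfy. On the one hand, $i_X : X \hoto RX$ is a weak equivalence by hypothesis; on the other, $RX$ lies in the fibrant subcategory by construction of $R$ as a fibrant replacement functor. Assembling these, $R$ together with $i$ provides a functorial right deformation retract onto the fibrant subcategory, of which the pair $\tuple{RX, i_X}$ is the component at $X$.

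If there is a non-routine point, it is the observation that the ``good'' subcategory is exactly the fibrant objects of the localised structure rather than of the projective structure, so that no extra check of homotopical stability is required: part~(iii) of the preceding proposition packages this, and the conclusion follows by direct inspection of the definitions.
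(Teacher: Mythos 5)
Your proposal rests on a misreading of what ``right approximation'' means here, and as a result the actual content of the statement is never addressed.

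In the DHKS framework a right approximation is a notion attached to a \emph{functor between homotopical categories}, not to an object of a homotopical category. Here the relevant functor is $X : \op{\mathcal{C}} \to \cat{\SSet}$ itself, with $\op{\mathcal{C}}$ carrying the weak equivalences $\mathcal{W}$ and $\cat{\SSet}$ the weak homotopy equivalences; a right approximation of $X$ is a pair $\tuple{K, a}$ where $K : \op{\mathcal{C}} \to \cat{\SSet}$ is homotopical (preserves weak equivalences) and $a : X \hoto K$ is natural, subject to a \emph{homotopical initiality} condition — the paper's own proof calls it a ``homotopically initial Kan extension of $X$ along $\id$''. This is quite different from the notion of a right deformation retract of $\Func{\op{\mathcal{C}}}{\cat{\SSet}}$ onto its fibrant objects, which is what you describe. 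Your two ``conditions'' — that $i_X$ is a weak equivalence in the localised structure and that $RX$ is fibrant there — merely say that $R$ is a fibrant replacement; they are hypotheses, not the conclusion, and they say nothing about the universal property.

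Concretely, what is missing is the following two-step argument. First, for any simplicial presheaf $Y$ that already preserves weak equivalences (equivalently, is projectively weakly equivalent to a locally fibrant presheaf), one must show that $i_Y : Y \to R Y$ is a weak equivalence \emph{in the projective model structure}, i.e.\ an objectwise weak homotopy equivalence. This does not follow trivially from $i_Y$ being a local weak equivalence; it requires comparing local weak equivalences between locally fibrant objects with projective weak equivalences, and in the paper this is done via a factorisation through a projectively fibrant replacement $\hat{Y}$ and the fact that locally fibrant objects are projectively fibrant. Second, one must then exhibit, for a morphism $\alpha : X \to Y$ with $Y$ homotopical, the zig-zag through $R\alpha$ and $i_Y$ (naturally in $\alpha$) that witnesses homotopical initiality. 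Without both of these steps, the claim is not proved. Note also that your proof never distinguishes between the local and projective (objectwise) weak equivalences, whereas this distinction is precisely where the work lies.
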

\begin{proof}
Let $Y : \op{\mathcal{C}} \to \cat{\SSet}$ be a simplicial presheaf and suppose $Y$ sends weak equivalences in $\mathcal{C}$ to weak homotopy equivalences. First, let us show that $i_Y : Y \to R Y$ is a weak equivalence in the projective model structure on $\Func{\op{\mathcal{C}}}{\cat{\SSet}}$. Let $j : Y \to \hat{Y}$ be any weak equivalence in the projective model structure where $\hat{Y}$ is projective-fibrant. Then the following diagram in $\Func{\op{\mathcal{C}}}{\cat{\SSet}}$ commutes,
\[
\begin{tikzcd}
Y \dar[swap]{j} \rar{i_Y} &
R Y \dar{R j} \\
\hat{Y} \rar[swap]{i_{\hat{Y}}} &
R \hat{Y}
\end{tikzcd}
\]
and by \autoref{prop:local.model.structure.on.simplicial.presheaves}, $\hat{Y}$ is a fibrant object in the localised model structure on $\Func{\op{\mathcal{C}}}{\cat{\SSet}}$, so both $i_{\hat{Y}} : \hat{Y} \to R \hat{Y}$ and $R j : R Y \to R \hat{Y}$ are weak equivalences between fibrant objects in the localised model structure, hence also weak equivalences between fibrant objects in the projective model structure by Theorem 3.2.13 in \citep{Hirschhorn:2003}. But $j : Y \to \hat{Y}$ is a weak equivalence in the projective model structure, so it follows that the same is true for $i_Y : Y \to R Y$.

Now, consider a morphism $\alpha : X \to Y$ in $\Func{\op{\mathcal{C}}}{\cat{\SSet}}$. Then the following diagram in $\Func{\op{\mathcal{C}}}{\cat{\SSet}}$ commutes,
\[
\begin{tikzcd}
X \dar[swap]{i_X} \rar[equals] &
X \dar \rar[equals] &
X \dar{\alpha} \\
R X \rar[swap]{R \alpha} &
R Y \rar[swap, leftarrow]{i_Y} &
Y 
\end{tikzcd}
\]
and if $\alpha : X \to Y$ is a weak equivalence in the localised model structure, then $R \alpha : R X \to R Y$ is a weak equivalence in the projective model structure. The diagram is clearly natural in $\alpha : X \to Y$, so $\tuple{R X, i_X}$ is a homotopically initial Kan extension of $X : \op{\mathcal{C}} \to \cat{\SSet}$ along $\id : \op{\mathcal{C}} \to \op{\mathcal{C}}$, \ie a right approximation for $X$.
\end{proof}

\begin{remark}
Unfortunately, it does not follow that every simplicial presheaf $\op{\mathcal{C}} \to \cat{\SSet}$ admits a total right derived functor; right approximations only have a universal property with respect to functors $\op{\Ho \mathcal{C}} \to \Ho \cat{\SSet}$ that arise from simplicial presheaves $\op{\mathcal{C}} \to \cat{\SSet}$.
\end{remark}

\begin{prop}
\label{prop:local.model.structure.on.simplicial.presheaves.on.the.standard.resolution}
Let $\ul{F \mathcal{C}}$ be the simplicially enriched category corresponding to the standard resolution $F_{\bullet} \mathcal{C}$ and let $\ulhoms : \ul{F \mathcal{C}} \to \Func{\op{\ul{F \mathcal{C}}}}{\ul{\cat{\SSet}}}$ be the enriched Yoneda embedding, \ie the simplicially enriched functor defined by $\ulhoms{B} = \ulHom[F \mathcal{C}]{\blank}{B}$.
\begin{enumerate}[(i)]
\item The projective model structure on $\Func{\op{\ul{F \mathcal{C}}}}{\ul{\cat{\SSet}}}$ exists.

\item The left Bousfield localisation of the projective model structure with respect to $\set{ \ulhoms{w} }{ w \in \mor F \mathcal{W} }$ exists.

\item A simplicial presheaf $\ul{P} : \op{\ul{F \mathcal{C}}} \to \ul{\cat{\SSet}}$ is fibrant in the localised model structure if and only if $P$ is projective-fibrant and sends morphisms in $F \mathcal{W}$ to weak homotopy equivalences.
\end{enumerate}
\end{prop}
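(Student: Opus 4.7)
The plan is to follow the proof of \autoref{prop:local.model.structure.on.simplicial.presheaves} almost verbatim, with the unenriched results from \citep{Hirschhorn:2003} replaced throughout by their simplicially enriched counterparts. The base of enrichment is $\ul{\cat{\SSet}}$, which is a cofibrantly generated, left proper, cellular, symmetric monoidal simplicial model category, and $\ul{F\mathcal{C}}$ is a small simplicially enriched category. This is the precise setting in which the enriched analogues of the Hirschhorn machinery are known to go through without incident.

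For (i), the existence of the projective model structure on $\Func{\op{\ul{F\mathcal{C}}}}{\ul{\cat{\SSet}}}$ is the enriched generalisation of Theorem 11.6.1 in \citep{Hirschhorn:2003}: the weak equivalences and fibrations are detected objectwise, and generating (trivial) cofibrations are obtained by tensoring the generating (trivial) cofibrations of $\ul{\cat{\SSet}}$ with the representables $\ulhoms{a}$ for $a \in \ob \ul{F\mathcal{C}}$. The resulting model structure inherits left properness and cellularity from $\ul{\cat{\SSet}}$. For (ii), the left Bousfield localisation with respect to the \emph{set} $\set{\ulhoms{w}}{w \in \mor F\mathcal{W}}$ then exists by (the enriched version of) Theorem 4.1.1 in \citep{Hirschhorn:2003}.

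For (iii), by (the enriched version of) Proposition 3.4.1 in \citep{Hirschhorn:2003}, a simplicial presheaf $\ul{P}$ is fibrant in the localised model structure if and only if it is projective-fibrant and \emph{local} with respect to each $\ulhoms{w}$, meaning that the induced morphism of derived simplicial mapping spaces
\[
\RHom[]{\ulhoms{\codom w}}{\ul{P}} \to \RHom[]{\ulhoms{\dom w}}{\ul{P}}
\]
is a weak homotopy equivalence. Since $\ul{P}$ is already assumed projective-fibrant and each representable $\ulhoms{a}$ is projective-cofibrant, the derived mapping space is computed by the simplicial hom, and the enriched Yoneda lemma identifies this morphism with $\ul{P}(w) : \ul{P}(\codom w) \to \ul{P}(\dom w)$. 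Thus being local is equivalent to sending every morphism in $F\mathcal{W}$ to a weak homotopy equivalence.

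The only real point of care is part (iii): one needs to know that the representables $\ulhoms{a}$ are projective-cofibrant in the enriched sense so that the ordinary enriched hom computes the derived mapping space, and one needs the enriched Yoneda lemma in the form $\ulHom[\Func{\op{\ul{F\mathcal{C}}}}{\ul{\cat{\SSet}}}]{\ulhoms{a}}{\ul{P}} \cong \ul{P}(a)$ naturally in both variables. Both are standard for the projective model structure on $\ul{\cat{\SSet}}$-valued functors on a small $\ul{\cat{\SSet}}$-category, so no genuine obstacle arises.
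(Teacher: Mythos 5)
Your proof is correct and follows essentially the same route as the paper: for (ii) and (iii) the paper also proves these exactly as in \autoref{prop:local.model.structure.on.simplicial.presheaves}, citing Theorem 4.1.1 and Proposition 3.4.1 of \citep{Hirschhorn:2003} together with the enriched Yoneda lemma. The one small difference is in part (i): rather than invoking a not-quite-stated ``enriched generalisation of Theorem 11.6.1,'' the paper instead applies the transfer theorem (Theorem 11.3.2 in \citep{Hirschhorn:2003}) along the forgetful functor $\Func{\op{\ul{F\mathcal{C}}}}{\ul{\cat{\SSet}}} \to \Func{\ob\mathcal{C}}{\ul{\cat{\SSet}}}$; this is a more concrete justification for exactly the description of generating (trivial) cofibrations you give, and it sidesteps any appeal to enriched results not literally present in the reference.
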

\begin{proof}
(i). Apply Theorem 11.3.2 in \citep{Hirschhorn:2003} to the evident forgetful functor $\Func{\op{\ul{F \mathcal{C}}}}{\ul{\cat{\SSet}}} \to \Func{\ob \mathcal{C}}{\ul{\cat{\SSet}}}$.

\bigskip\noindent
(ii) and (iii). These may be proved the same way as in \autoref{prop:local.model.structure.on.simplicial.presheaves}.
\end{proof}

\begin{thm}
\label{thm:Quillen.equivalence.of.local.model.structures}
Let $\ul{U} : \ul{F \mathcal{C}} \to \mathcal{C}$ be the standard augmentation and let $\ul{V} : \ul{F \mathcal{C}} \to \ul{\totalL \mathcal{C}}$ be the localising functor.
\begin{enumerate}[(i)]
\item The induced functor
\[
\ul{U}^* : \Func{\op{\mathcal{C}}}{\ul{\cat{\SSet}}} \to \Func{\op{\ul{F \mathcal{C}}}}{\ul{\cat{\SSet}}}
\]
is a right Quillen equivalence with respect to the projective model structures.

\item The induced functor
\[
\ul{U}^* : \Func{\op{\mathcal{C}}}{\ul{\cat{\SSet}}} \to \Func{\op{\ul{F \mathcal{C}}}}{\ul{\cat{\SSet}}}
\]
is a right Quillen equivalence with respect to the localised model structures.

\item The induced functor
\[
\ul{V}^* : \Func{\op{\ul{\totalL \mathcal{C}}}}{\cat{\SSet}} \to \Func{\op{\ul{F \mathcal{C}}}}{\ul{\cat{\SSet}}}
\]
is a right Quillen equivalence with respect to the projective model structure on $\Func{\op{\ul{\totalL \mathcal{C}}}}{\ul{\cat{\SSet}}}$ and the localised model structure on $\Func{\op{\ul{F \mathcal{C}}}}{\ul{\cat{\SSet}}}$.
\end{enumerate}
\end{thm}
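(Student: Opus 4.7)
For \textup{(i)}, the approach is to identify $\ul{U} : \ul{F \mathcal{C}} \to \mathcal{C}$ as a Dwyer--Kan equivalence of simplicially enriched categories and invoke the general principle that restriction along such an equivalence is a right Quillen equivalence of projective model structures on simplicial presheaves. Since $\ul{U}$ is the identity on objects, $\ul{U}^*$ is computed objectwise as the identity on simplicial sets; it therefore preserves and reflects objectwise fibrations and weak equivalences, which makes it right Quillen automatically. The nontrivial input is that each hom-space map $\ulHom[F \mathcal{C}]{a}{b} \to \discr \Hom[\mathcal{C}]{a}{b}$ is a simplicial homotopy equivalence, a standard consequence of the extra degeneracies afforded by the comonadic construction of $F_{\bullet} \mathcal{C}$. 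The Quillen-equivalence criterion then reduces on projective-cofibrant generators to the assertion that postcomposition by a Dwyer--Kan equivalence induces a weak equivalence of representable presheaves, which follows from the enriched Yoneda lemma.

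For \textup{(ii)}, I would descend the Quillen equivalence of \textup{(i)} across the two left Bousfield localisations, using the standard fact that a Quillen equivalence of base model structures induces a Quillen equivalence of left Bousfield localisations provided the left adjoint sends each generator of one localisation to a weak equivalence in the other. Since $\ul{U}_! \ulhoms{w} \cong \homs{\ul{U}(w)}$ and $\ul{U}$ carries the morphisms of $F \mathcal{W}$ into $\mathcal{W}$, the image $\homs{\ul{U}(w)}$ is itself one of the maps being inverted in the localised structure on $\Func{\op{\mathcal{C}}}{\ul{\cat{\SSet}}}$, so the compatibility is automatic.

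For \textup{(iii)}, the Quillen-adjunction half is easy: $\ul{V}$ is identity on objects, so the adjunction $\ul{V}_! \dashv \ul{V}^*$ is Quillen for the projective structures on both sides, and to promote the target to its localised structure it suffices to observe that $\ul{V}_! \ulhoms{w} \cong \ulhoms{\ul{V}(w)}$ is a projective weak equivalence on $\Func{\op{\ul{\totalL \mathcal{C}}}}{\cat{\SSet}}$ because $\ul{V}$ inverts $F \mathcal{W}$ up to homotopy by construction. The main obstacle is upgrading this to a Quillen equivalence, since $\ul{V}$ itself is not a Dwyer--Kan equivalence and the device used in \textup{(i)} does not apply directly. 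I would instead inspect the derived unit and counit: on projective-cofibrant generators the comparison reduces to matching $\ulHom[F \mathcal{C}]{a}{b}$, after fibrant replacement in the localised structure on $\Func{\op{\ul{F \mathcal{C}}}}{\ul{\cat{\SSet}}}$, with $\ulHom[\totalL \mathcal{C}]{a}{b}$. That identification is essentially the universal property, up to homotopy, of the levelwise Gabriel--Zisman localisation defining $\ul{\totalL \mathcal{C}}$, and can be extracted from the characterisation of fibrant objects in \autoref{prop:local.model.structure.on.simplicial.presheaves.on.the.standard.resolution} together with the explicit simplicial freeness of $F_{\bullet} \mathcal{C}$.
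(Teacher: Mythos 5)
Your plan for (i) matches the paper's: you both reduce the Quillen-equivalence part to the fact that $\ul{U}$ is a Dwyer--Kan equivalence. The paper simply verifies $\ul{U}^*$ is right Quillen for the projective structures and then cites Theorem 2.1 of Dwyer--Kan (1987a), whereas you also note the extra degeneracies making the hom-space maps simplicial homotopy equivalences; that is correct supporting detail, though the paper leaves it implicit in the DK-equivalence.

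For (ii) you take a genuinely different route: you invoke the general principle that a Quillen equivalence descends across matching left Bousfield localisations (Hirschhorn Theorem 3.3.20), observing that $\ul{U}_! \ulhoms{w} \cong \homs{\ul{U}(w)}$ identifies the two localising sets. The paper instead applies Hirschhorn 3.3.18 to obtain only the Quillen \emph{adjunction} and then checks the Quillen-equivalence condition directly via Corollary 3.8 in Dwyer--Kan (1987a). Your route is cleaner and shorter, but it requires that the localising sets agree precisely, i.e.\ that $\set{\homs{\ul{U}(w)}}{w \in \mor F\mathcal{W}}$ and $\set{\homs{w}}{w \in \mor \mathcal{W}}$ generate the same left Bousfield localisation. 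This does hold because $\ul{U}$ is surjective onto $\mor\mathcal{W}$ and sends $\mor F\mathcal{W}$ into $\mor\mathcal{W}$, but it is worth recording that verification since it is the load-bearing hypothesis for your shortcut.

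The gap is in (iii). The Quillen-adjunction half is fine, and you are right that $\ul{V}$ is not a Dwyer--Kan equivalence, so the device of (i) is unavailable. But your sketch for the Quillen-equivalence half -- ``inspect the derived unit and counit \dots\ the identification is essentially the universal property, up to homotopy, of the levelwise Gabriel--Zisman localisation \dots\ can be extracted from the characterisation of fibrant objects together with the explicit simplicial freeness of $F_{\bullet}\mathcal{C}$'' -- names ingredients rather than giving an argument. The crucial content is that a projective-fibrant presheaf on $\ul{\totalL\mathcal{C}}$, restricted along $\ul{V}$, becomes a fibrant object of the \emph{localised} structure on $\Func{\op{\ul{F\mathcal{C}}}}{\ul{\cat{\SSet}}}$, and conversely that every such local fibrant presheaf is, up to local equivalence, of this form. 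That is exactly the comparison between the localised projective model structure and the simplicial localisation carried out in paragraph 4.2 of Dwyer--Kan (1987a), which the paper cites; without invoking that result (or reproving it), your sketch does not close the argument. You should either cite it or spell out the essential-surjectivity claim: show that the derived unit $\ul{X} \to \ul{V}^*\ul{V}_!\ul{X}$ is a local equivalence for projective-cofibrant $\ul{X}$, which is precisely \autoref{lem:unit.of.left.Kan.extension}(ii) in the paper and again rests on the DK result.
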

\begin{proof}
(i). First, we must show that $\ul{U}^* : \Func{\op{\mathcal{C}}}{\ul{\cat{\SSet}}} \to \Func{\op{\ul{F \mathcal{C}}}}{\ul{\cat{\SSet}}}$ is a right Quillen functor with respect to the projective model structures. It is well known that $\ul{U}^*$ has a left adjoint, namely the unique (up to unique isomorphism) simplicially enriched functor $\ul{U}_! : \Func{\op{\ul{F \mathcal{C}}}}{\ul{\cat{\SSet}}} \to \Func{\op{\mathcal{C}}}{\ul{\cat{\SSet}}}$ that preserves simplicially enriched colimits and makes the following diagram commute:
\[
\begin{tikzcd}
\ul{F \mathcal{C}} \dar[swap]{\ul{U}} \rar{\ulhoms} &
\Func{\op{\ul{F \mathcal{C}}}}{\ul{\cat{\SSet}}} \dar[dashed]{\ul{U}_!} \\
\mathcal{C} \rar[swap]{\homs} &
\Func{\op{\mathcal{C}}}{\ul{\cat{\SSet}}}
\end{tikzcd}
\]
Moreover, it is clear that $\ul{U}^*$ preserves projective fibrations and natural weak equivalences, so $\ul{U}^*$ is indeed a right Quillen functor.

It remains to be verified that the functor $\ul{U}^*$ is a right Quillen equivalence, and by Proposition 1.3.13 in \citep{Hovey:1999} it suffices to check that the right derived functor (with respect to the projective model structures)
\[
\totalR \ul{U}^* : \Ho \Func{\op{\mathcal{C}}}{\ul{\cat{\SSet}}} \to \Ho \Func{\op{\ul{F \mathcal{C}}}}{\ul{\cat{\SSet}}}
\]
is fully faithful and essentially surjective on objects. But $\ul{U} : \ul{F \mathcal{C}} \to \mathcal{C}$ is a Dwyer--Kan equivalence (by Proposition 2.6 in \citep{Dwyer-Kan:1980a}), so this is a straightforward consequence of Theorem 2.1 in \citep{Dwyer-Kan:1987a}.

\bigskip\noindent
(ii). We already know that $\ul{U}^* : \Func{\op{\mathcal{C}}}{\ul{\cat{\SSet}}} \to \Func{\op{\ul{F \mathcal{C}}}}{\ul{\cat{\SSet}}}$ is a right Quillen functor with respect to the projective model structures, so $\ul{U}_!$ is a left Quillen functor with respect to the projective model structures. Since representable simplicial presheaves are projective-cofibrant and $\ul{U}$ restricts to a functor $\ul{F \mathcal{W}} \to \mathcal{W}$, we may apply Proposition 3.3.18 in \citep{Hirschhorn:2003} and deduce that $\ul{U}_!$ is a left Quillen functor with respect to the localised model structures. Thus, $\ul{U}^*$ is indeed a right Quillen functor with respect to the localised model structures.

To show that $\ul{U}^*$ is a right Quillen equivalence, it now suffices to check that the right derived functor (with respect to the localised model structures)
\[
\totalR \ul{U}^* : \Ho \Func{\op{\mathcal{C}}}{\ul{\cat{\SSet}}} \to \Ho \Func{\op{\ul{F \mathcal{C}}}}{\ul{\cat{\SSet}}}
\]
is fully faithful and essentially surjective on objects. Recalling propositions~\ref{prop:local.model.structure.on.simplicial.presheaves} and~\ref{prop:local.model.structure.on.simplicial.presheaves.on.the.standard.resolution}, this is a straightforward consequence of Corollary 3.8 in \citep{Dwyer-Kan:1987a}.

\bigskip\noindent
(iii). As with (i), it is easy to see that $\ul{V}^* : \Func{\op{\ul{\totalL \mathcal{C}}}}{\cat{\SSet}} \to \Func{\op{\ul{F \mathcal{C}}}}{\ul{\cat{\SSet}}}$ is a right Quillen functor with respect to the projective model structures, so it is a right Quillen functor with respect to the localised model structure on $\Func{\op{\ul{F \mathcal{C}}}}{\ul{\cat{\SSet}}}$ \emph{a fortiori}. Thus, to show that $\ul{V}^*$ is a right Quillen equivalence, it suffices to check that the right derived functor
\[
\totalR \ul{V}^* : \Ho \Func{\op{\ul{\totalL \mathcal{C}}}}{\cat{\SSet}} \to \Ho \Func{\op{\ul{F \mathcal{C}}}}{\ul{\cat{\SSet}}}
\]
is fully faithful and essentially surjective on objects, and this is a consequence of paragraph 4.2 in \citep{Dwyer-Kan:1987a}.
\end{proof}

\begin{lem}
\label{lem:unit.of.left.Kan.extension}
\ \noprelistbreak
\begin{enumerate}[(i)]
\item Let $\ul{U}_! : \Func{\op{\ul{F \mathcal{C}}}}{\ul{\cat{\SSet}}} \to \Func{\op{\mathcal{C}}}{\cat{\SSet}}$ be the left adjoint of the functor $\ul{U}^* : \Func{\op{\mathcal{C}}}{\cat{\SSet}} \to \Func{\op{\ul{F \mathcal{C}}}}{\ul{\cat{\SSet}}}$. For any projective-cofibrant simplicial presheaf $\ul{X} : \op{\ul{F \mathcal{C}}} \to \ul{\cat{\SSet}}$, the adjunction unit
\[
\eta_{\ul{X}} : \ul{X} \to \ul{U}^* \ul{U}_! \ul{X}
\]
is a weak equivalence in the projective model structure on $\Func{\op{\ul{F \mathcal{C}}}}{\ul{\cat{\SSet}}}$.

\item Let $\ul{V}_! : \Func{\op{\ul{F \mathcal{C}}}}{\ul{\cat{\SSet}}} \to \Func{\op{\ul{\totalL \mathcal{C}}}}{\cat{\SSet}}$ be the left adjoint of the functor $\ul{V}^* : \Func{\op{\ul{\totalL \mathcal{C}}}}{\cat{\SSet}} \to \Func{\op{\ul{F \mathcal{C}}}}{\ul{\cat{\SSet}}}$. For any projective-cofibrant simplicial presheaf $\ul{X} : \op{\ul{F \mathcal{C}}} \to \ul{\cat{\SSet}}$, the adjunction unit
\[
\eta_{\ul{X}} : \ul{X} \to \ul{V}^* \ul{V}_! \ul{X}
\]
is a weak equivalence in the localised model structure on $\Func{\op{\ul{F \mathcal{C}}}}{\ul{\cat{\SSet}}}$.
\end{enumerate}
\end{lem}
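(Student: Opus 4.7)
The plan is to derive both claims from the Quillen equivalences established in \autoref{thm:Quillen.equivalence.of.local.model.structures}, leveraging the key observation that both $\ul{U} : \ul{F \mathcal{C}} \to \mathcal{C}$ and $\ul{V} : \ul{F \mathcal{C}} \to \ul{\totalL \mathcal{C}}$ are the identity on objects, so that the right adjoints $\ul{U}^*$ and $\ul{V}^*$---being precomposition functors---preserve \emph{all} projective weak equivalences, not merely those between projective-fibrant objects. This observation promotes the usual consequence of a Quillen equivalence (that the \emph{derived} unit at a cofibrant object is a weak equivalence) into the stronger underived statement that we need.

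First I would handle (i). Given a projective-cofibrant $\ul{X}$, choose a projective-fibrant replacement $j : \ul{U}_! \ul{X} \to \hat{Y}$ in $\Func{\op{\mathcal{C}}}{\cat{\SSet}}$. By the Quillen equivalence property in \autoref{thm:Quillen.equivalence.of.local.model.structures}(i), the adjoint
\[
\ul{X} \xrightarrow{\eta_{\ul{X}}} \ul{U}^* \ul{U}_! \ul{X} \xrightarrow{\ul{U}^* j} \ul{U}^* \hat{Y}
\]
is a projective weak equivalence. Because $\ul{U}$ is the identity on objects, $\ul{U}^* j$ is pointwise, hence projective, a weak equivalence. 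Applying 2-out-of-3 to the displayed composite gives that $\eta_{\ul{X}}$ is itself a projective weak equivalence.

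For (ii) I would run the same argument verbatim, replacing \autoref{thm:Quillen.equivalence.of.local.model.structures}(i) by (iii): pick a projective-fibrant replacement $j : \ul{V}_! \ul{X} \to \hat{Y}$ in $\Func{\op{\ul{\totalL \mathcal{C}}}}{\cat{\SSet}}$; the Quillen equivalence forces the adjoint composite $\ul{X} \to \ul{V}^* \ul{V}_! \ul{X} \to \ul{V}^* \hat{Y}$ to be a weak equivalence in the localised model structure on $\Func{\op{\ul{F \mathcal{C}}}}{\ul{\cat{\SSet}}}$; the second factor $\ul{V}^* j$ is a projective weak equivalence (again because $\ul{V}$ is the identity on objects), so \emph{a fortiori} a localised weak equivalence; and 2-out-of-3 yields the claim.

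I do not anticipate any genuine obstacle: the argument is essentially formal once the identity-on-objects observation is in hand. The one point deserving a remark in the writeup is the verification that $\ul{U}^*$ and $\ul{V}^*$ really do preserve projective weak equivalences between possibly non-fibrant presheaves; this reduces to noting that both precomposition functors leave the value at each object unchanged, so projective (i.e.\ pointwise) weak equivalences transport trivially.
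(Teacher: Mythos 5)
Your proof is correct and follows essentially the same route as the paper: pick a fibrant replacement $j$ of $\ul{U}_!\ul{X}$ (resp.\ $\ul{V}_!\ul{X}$), note that the Quillen equivalence of \autoref{thm:Quillen.equivalence.of.local.model.structures} makes the adjoint composite $\ul{U}^* j \circ \eta_{\ul{X}}$ a weak equivalence, observe that $\ul{U}^* j$ (resp.\ $\ul{V}^* j$) is itself a weak equivalence since $\ul{U}^*$ preserves all projective weak equivalences, and conclude by two-out-of-three. The only cosmetic difference is that you spell out the identity-on-objects reason why the precomposition functors preserve pointwise weak equivalences, which the paper leaves implicit.
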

\begin{proof}
(i). Let $i : \ul{U}_! \ul{X} \to \ul{Y}$ be any weak equivalence in projective model structure on $\Func{\op{\mathcal{C}}}{\cat{\SSet}}$ where $\ul{Y}$ is projective-fibrant. The functor $\ul{U}^* : \Func{\op{\mathcal{C}}}{\cat{\SSet}} \to \Func{\op{\ul{F \mathcal{C}}}}{\ul{\cat{\SSet}}}$ preserves all weak equivalences, so the morphism $\ul{U}^* i : \ul{U}^* \ul{U}_! \ul{X} \to \ul{U}^* \ul{Y}$ is a weak equivalence in the projective model structure on $\Func{\op{\ul{F \mathcal{C}}}}{\ul{\cat{\SSet}}}$; but \autoref{thm:Quillen.equivalence.of.local.model.structures} implies that the composite $\ul{U}^* i \circ \eta_{\ul{X}}$ is a weak equivalence in $\Func{\op{\ul{F \mathcal{C}}}}{\ul{\cat{\SSet}}}$, so the claim is a consequence of the 2-out-of-3 property.

\bigskip\noindent
(ii). A similar argument works.
\end{proof}
%
%

\begin{prop}
\label{prop:right.approximations.for.representable.presheaves}
Let $R : \Func{\op{\mathcal{C}}}{\cat{\SSet}} \to \Func{\op{\mathcal{C}}}{\cat{\SSet}}$ be a fibrant replacement functor for the localised model structure.
\begin{enumerate}[(i)]
\item $R \circ \homs : \mathcal{C} \to \Func{\op{\mathcal{C}}}{\cat{\SSet}}$ preserves weak equivalences.

\item There is an isomorphism
\[
R \circ \homs \cong \ulHom[\totalL \mathcal{C}]{\blank}{\blank}
\]
of functors $\op{\Ho \mathcal{C}} \times \Ho \mathcal{C} \to \Ho \cat{\SSet}$.
\end{enumerate}
\end{prop}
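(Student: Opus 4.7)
Part (i) follows directly from the construction. If $w$ is a morphism in $\mathcal{W}$, then $\homs{w}$ is, by the very definition of the left Bousfield localisation (\autoref{prop:local.model.structure.on.simplicial.presheaves}), a weak equivalence in the localised model structure on $\Func{\op{\mathcal{C}}}{\cat{\SSet}}$. Applying the fibrant replacement yields a localised weak equivalence $R(\homs{w})$ between fibrant objects, which is automatically a projective weak equivalence by Theorem 3.2.13 of \citep{Hirschhorn:2003}, i.e.\ a degreewise weak homotopy equivalence.

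For part (ii), the plan is to transport the representable presheaf on $\ul{\totalL \mathcal{C}}$ back to $\Func{\op{\mathcal{C}}}{\cat{\SSet}}$ via the chain of Quillen equivalences of \autoref{thm:Quillen.equivalence.of.local.model.structures}. Since $\ul{U}$ and $\ul{V}$ are identity on objects, the left adjoints preserve representables: $\ul{U}_! \ulhoms{B} \cong \homs{B}$ and $\ul{V}_! \ulhoms{B} \cong \ulHom[\totalL \mathcal{C}]{\blank}{B}$. The key observation I would establish is that both $\ul{U}^* R(\homs{B})$ and $\ul{V}^* P$, where $P$ denotes a projective-fibrant replacement of $\ulHom[\totalL \mathcal{C}]{\blank}{B}$ in $\Func{\op{\ul{\totalL \mathcal{C}}}}{\cat{\SSet}}$, serve as localised-fibrant replacements of $\ulhoms{B}$ in $\Func{\op{\ul{F \mathcal{C}}}}{\ul{\cat{\SSet}}}$. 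Fibrancy follows because $\ul{U}^*$ and $\ul{V}^*$ are right Quillen functors for the relevant pairs of model structures (\autoref{thm:Quillen.equivalence.of.local.model.structures}); and the canonical comparison maps out of $\ulhoms{B}$ are localised weak equivalences by \autoref{lem:unit.of.left.Kan.extension}.

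From this, uniqueness of fibrant replacement up to homotopy gives $\ul{U}^* R(\homs{B}) \cong \ul{V}^* P$ in the homotopy category of the localised model structure on $\Func{\op{\ul{F \mathcal{C}}}}{\ul{\cat{\SSet}}}$, and by Theorem 3.2.13 of \citep{Hirschhorn:2003} once more this isomorphism is realised by a zigzag of projective weak equivalences. Evaluating at any $A \in \mathrm{ob}\, \mathcal{C}$—which makes sense as $\ul{U}$ and $\ul{V}$ are identity on objects—then yields $R(\homs{B})(A) \simeq P(A) \simeq \ulHom[\totalL \mathcal{C}]{A}{B}$ in $\Ho \cat{\SSet}$.

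The hard part will be the bookkeeping required to upgrade these pointwise equivalences to a natural isomorphism of bifunctors on $\op{\Ho \mathcal{C}} \times \Ho \mathcal{C}$. Functoriality in $B$ should fall out of choosing all fibrant replacements functorially; descent to $\op{\Ho \mathcal{C}} \times \Ho \mathcal{C}$ uses part (i) on the $R \circ \homs$ side, while on the other side I would appeal to the fact from \citep{Dwyer-Kan:1980a} that weak equivalences in $\mathcal{C}$ become homotopy equivalences in $\ul{\totalL \mathcal{C}}$, so that precomposition with them induces weak equivalences of hom-spaces.
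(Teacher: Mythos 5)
Your proof takes essentially the same route as the paper: exhibit $\ul{U}^* R\homs{B}$ and a presheaf transported along $\ul{V}^*$ from $\Func{\op{\ul{\totalL \mathcal{C}}}}{\cat{\SSet}}$ as localised-fibrant replacements of $\ulhoms{B}$ in $\Func{\op{\ul{F \mathcal{C}}}}{\ul{\cat{\SSet}}}$, then use uniqueness of fibrant replacement together with the fact that localised weak equivalences between localised-fibrant objects are projective (degreewise) weak equivalences. One step to tighten: the claim that the comparison map $\ulhoms{B} \to \ul{U}^* R\homs{B}$ is a localised weak equivalence cannot be attributed to \autoref{lem:unit.of.left.Kan.extension} alone — that lemma controls only the unit $\ulhoms{B} \to \ul{U}^* \ul{U}_! \ulhoms{B}$, and $\ul{U}^*$ does not in general preserve localised weak equivalences between non-fibrant objects, so you also need the Quillen-equivalence part of \autoref{thm:Quillen.equivalence.of.local.model.structures}(ii) (the derived unit at the cofibrant object $\ulhoms{B}$ is a weak equivalence), after which the map $\ul{U}^* \ul{U}_! \ulhoms{B} \to \ul{U}^* R\homs{B}$ is handled by two-out-of-three.
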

\begin{proof}
Let $S : \Func{\op{\ul{F \mathcal{C}}}}{\ul{\cat{\SSet}}} \to \Func{\op{\ul{F \mathcal{C}}}}{\ul{\cat{\SSet}}}$ be a fibrant replacement functor for the localised model structure. Then, for every object $B$ in $\mathcal{C}$, recalling that $\ul{U}_! \ulhoms{B} = \homs{B}$, we have the following commutative diagram in $\Func{\op{\ul{F \mathcal{C}}}}{\ul{\cat{\SSet}}}$,
\[
\begin{tikzcd}
\ul{V}^* \ul{V}_! \ulhoms{B} \dar & 
\ulhoms{B} \lar[swap]{\eta_{\ulhoms{B}}} \dar \rar{\eta_{\ulhoms{B}}} &
\ul{U}^* \ul{U}_! \ulhoms{B} \dar \rar &
\ul{U}^* R \homs{B} \dar \\
S \ul{V}^* \ul{V}_! \ulhoms{B} & 
S \ulhoms{B} \lar{R \eta_{\ulhoms{B}}} \rar[swap]{S \eta_{\ulhoms{B}}} &
S \ul{U}^* \ul{U}_! \ulhoms{B} \rar &
S \ul{U}^* R \homs{B}
\end{tikzcd}
\]
where the vertical arrows are natural weak equivalences in the localised model structure. \Autoref{thm:Quillen.equivalence.of.local.model.structures} and \autoref{lem:unit.of.left.Kan.extension} imply that the horizontal arrows are also weak equivalences in the localised model structure, and since weak equivalences between fibrant objects in the localised model structures are also weak equivalences between fibrant objects in the projective model structure, it follows that
\[
\ul{U}^* R \homs_B \cong S \ul{V}^* \ul{V}_! \ulhoms{B}
\]
as functors $\op{F \mathcal{C}} \to \Ho \cat{\SSet}$, naturally in $B$. Moreover, it is straightforward (using \autoref{prop:local.model.structure.on.simplicial.presheaves.on.the.standard.resolution}) to verify that the morphism $\ul{V}^* \ul{V}_! \ulhoms{B} \to S \ul{V}^* \ul{V}_! \ulhoms{B}$ is a weak equivalence in the projective model structure, so recalling that $\ul{V}_! \ulhoms{B} = \ulHom[\totalL \mathcal{C}]{\blank}{B}$, we have an isomorphism
\[
R \circ \homs \cong \ulHom[\totalL \mathcal{C}]{\blank}{\blank}
\]
of functors $\op{F \mathcal{C}} \times F \mathcal{C} \to \Ho \cat{\SSet}$. But these functors both factor through the evident functor $\op{F \mathcal{C}} \times F \mathcal{C} \to \op{\Ho \mathcal{C}} \times \Ho \mathcal{C}$, so we are done.
\end{proof}

\begin{thm}
\ \noprelistbreak
\begin{enumerate}[(i)]
\item There exist a functor $\hathoms : \mathcal{C} \to \Func{\op{\mathcal{C}}}{\cat{\SSet}}$ and a natural transformation $\eta : \homs \hoto \hathoms$ such that each $\eta_B : \homs{B} \to \hathoms{B}$ is a weak equivalence in the localised model structure and each $\hathoms{B}$ is fibrant in the localised model structure. 

\item If $\mathcal{C}$ is a model category, then the functor $\op{\Ho \mathcal{C}} \times \Ho \mathcal{C} \to \Ho \cat{\SSet}$ defined by $\tuple{A, B} \mapsto \hathoms{B} \argp{A}$ is (the functor part of) a derived hom-space functor $\RHom[\mathcal{C}] : \op{\Ho \mathcal{C}} \times \Ho \mathcal{C} \to \Ho \cat{\SSet}$.
\end{enumerate}
\end{thm}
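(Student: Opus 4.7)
The plan for part (i) is immediate: set $\hathoms := R \circ \homs$, where $R$ is a functorial fibrant replacement for the localised model structure on $\Func{\op{\mathcal{C}}}{\cat{\SSet}}$ (which exists by \autoref{prop:local.model.structure.on.simplicial.presheaves}), and let $\eta : \homs \hoto \hathoms$ be the restriction to representables of the natural weak equivalence $\id \hoto R$. Both asserted properties---that each $\eta_B$ is a weak equivalence in the localised model structure, and each $\hathoms{B}$ is fibrant in the localised model structure---are then immediate transcriptions of the corresponding properties of $R$.

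For part (ii), the strategy is to chain together three natural isomorphisms in $\Ho \cat{\SSet}$ so that $\hathoms{B}(A)$ becomes identified with a known derived hom-space functor. First, I would invoke \autoref{prop:right.approximations.for.representable.presheaves}(ii) applied to the underlying category with weak equivalences of $\mathcal{M}$, obtaining a natural isomorphism
\[
\hathoms{B}(A) = R \homs{B}(A) \cong \ulHom[\totalL \mathcal{M}]{A}{B}
\]
of functors $\op{\Ho \mathcal{M}} \times \Ho \mathcal{M} \to \Ho \cat{\SSet}$. Second, I would compose with the natural weak equivalence $\ulHom[\totalL \mathcal{M}]{\blank}{\blank} \to \ulHom[\LH \mathcal{M}]{\blank}{\blank}$ induced by the canonical (identity-on-objects, simplicially enriched) comparison functor $\totalL \mathcal{M} \to \LH \mathcal{M}$, which is a Dwyer--Kan equivalence by Proposition 2.2 in \citep{Dwyer-Kan:1980b}. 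Third, I would apply \autoref{thm:derived.hom.spaces.via.hammock.localisation} to identify $\ulHom[\LH \mathcal{M}]{\blank}{\blank}$ with a derived hom-space functor $\RHom[\mathcal{M}]$.

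Composing these three natural isomorphisms yields $\hathoms{B}(A) \cong \RHom[\mathcal{M}]{A}{B}$ in $\Ho \cat{\SSet}$, naturally in both $A$ and $B$. Transporting the defining isomorphism $\RHom[\mathcal{M}]{\pblank^0}{\pblank_0} \cong \sHom[\mathcal{M}]{\blank}{\blank}$ across this chain then promotes the bifunctor $\tuple{A,B} \mapsto \hathoms{B}(A)$ to a derived hom-space functor in the sense of the earlier definition.

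The main obstacle I anticipate is the bi-naturality of the middle comparison: one must ensure that the Dwyer--Kan equivalence $\totalL \mathcal{M} \to \LH \mathcal{M}$ induces an isomorphism in $\Ho \cat{\SSet}$ that is natural in \emph{both} variables simultaneously. However, since the comparison is a simplicially enriched functor that is the identity on objects, naturality in both variables is automatic, and the substance of the step is entirely contained in the hom-spacewise weak equivalence furnished by the cited Dwyer--Kan result.
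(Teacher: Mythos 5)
Your proposal is correct and follows essentially the same route as the paper: part~(i) is functorial fibrant replacement in the localised model structure, and part~(ii) chains together \autoref{prop:right.approximations.for.representable.presheaves}, the Dwyer--Kan equivalence between $\ul{\totalL \mathcal{C}}$ and $\ul{\LH \mathcal{C}}$ from Proposition~2.2 of \citep{Dwyer-Kan:1980b}, and \autoref{thm:derived.hom.spaces.via.hammock.localisation}. The paper's proof states the same thing more tersely, citing those three results without spelling out the chain, and your remark on bi-naturality (resolved because the comparison is identity-on-objects and simplicially enriched) correctly dispatches the only point that might otherwise deserve comment.
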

\begin{proof}
(i). Use functorial fibrant replacements for the localised model structure on $\Func{\op{\mathcal{C}}}{\cat{\SSet}}$.

\bigskip\noindent
(ii). In view of \autoref{thm:derived.hom.spaces.via.hammock.localisation} and \autoref{prop:right.approximations.for.representable.presheaves}, this is an immediate consequence of Proposition 2.2 in \citep{Dwyer-Kan:1980b}.
\end{proof}

\ifdraftdoc

\else
  \printbibliography
\fi

\end{document}